\documentclass[a4paper,10pt]{article}
\usepackage[a4paper,top=3cm,bottom=3cm,left=3cm,right=3cm,marginparwidth=4cm]{geometry}

\usepackage[utf8]{inputenc}

\usepackage{amsmath}
\usepackage{amssymb}
\usepackage{amsfonts}
\usepackage{amsthm}
\usepackage{mathtools}
\usepackage{graphicx}
\usepackage{booktabs}
\usepackage{hhline}
\usepackage[colorinlistoftodos]{todonotes}
\usepackage{soul}
\usepackage[ruled, linesnumbered]{algorithm2e}
\usepackage{hyperref}
\usepackage{bm}
\usepackage{authblk}
\usepackage{cleveref}

\PassOptionsToPackage{numbers, sort&compress}{natbib}
\usepackage{natbib}

\newtheorem{theorem}{Theorem}[section]
\newtheorem{corollary}[theorem]{Corollary}
\newtheorem{lemma}[theorem]{Lemma}

\newtheorem{conjecture}[theorem]{Conjecture}

\newtheorem{observation}[theorem]{Observation}
\newtheorem{remark}[theorem]{Remark}

\newtheoremstyle{TheoremNum}
	{\topsep}{\topsep}              
	{\itshape}                      
	{}                              
	{\bfseries}                     
	{.}                             
	{ }                             
	{\thmname{#1}\thmnote{ \bfseries #3}}
\theoremstyle{TheoremNum}
\newtheorem{thmn}{Theorem}

\theoremstyle{definition}
\newtheorem{definition}[theorem]{Definition}

\usepackage{tikz}
\usepackage{caption, subcaption}

\usepackage{graphics}
\usepackage{array}

\newcommand{\R}{\mathbb{R}}

\newcommand{\cC}{\mathcal{C}}

\newcommand{\cT}{\mathcal{T}}
\newcommand{\cTC}{\mathcal{TC}}
\newcommand{\cOr}{\mathcal{OR}}
\newcommand{\cTB}{\mathcal{TB}}

\newcommand{\dpVC}{\textsc{Degree-3 Vertex Cover}}
\newcommand{\dpLDc}{\textsc{$L_\cC$-Distance}}
\newcommand{\dpAsDc}{\textsc{$A^*_\cC$-Distance}}
\newcommand{\dpADc}{\textsc{$A_\cC$-Distance}}
\newcommand{\dpLD}{\textsc{$L_\cOr$-Distance}}

\newcommand{\dpAsD}{\textsc{$A^*_\cOr$-Distance}}

\definecolor{lightgreen}{RGB}{180,255,150}
\definecolor{darkgreen}{RGB}{46,140,0}

\newcommand{\leo}[1]{#1}
\newcommand{\leonew}[1]{\textcolor{black}{#1}}

\newcommand{\ym}[1]{\textcolor{black}{#1}}
\newcommand{\markj}[1]{#1}
\newcommand{\ej}[1]{\textcolor{black}{#1}}

\DeclareMathOperator{\head}{head}
\DeclareMathOperator{\tail}{tail}
\DeclareMathOperator{\Gad}{Gad}
\DeclarePairedDelimiter\floor{\lfloor}{\rfloor}
\DeclarePairedDelimiter\ceil{\lceil}{\rceil}

\definecolor{lightblue}{RGB}{180,180,255}

\title{Proximity Measures for Classes of Phylogenetic Networks\footnote{This research was partially funded by the Dutch Research Council (NWO) grant OCENW.KLEIN.125, OCENW.M.21.306, and OCENW.GROOT.2019.015.}}
\author[1]{Leo van Iersel\thanks{L.J.J.vanIersel@tudelft.nl}}
\author[1]{Mark Jones\thanks{m.jones@mdx.ac.uk}}
\author[1]{Esther Julien\thanks{E.A.T.Julien@tudelft.nl}}
\author[2]{Yangjing Long\thanks{yangjing@ccnu.edu.cn}}
\author[1]{Yukihiro Murakami\thanks{Y.Murakami@tudelft.nl}}

\affil[1]{Delft Institute of Applied Mathematics, TU Delft, the Netherlands}
\affil[2]{School of Mathematics and Statistics, Central China Normal University, China}

\date{\today}

\begin{document}

\maketitle

\begin{abstract}
\ym{Phylogenetic networks are used to represent the evolutionary history of species.
Due to biological interpretations and computational advantages, researchers have focused on restricted classes of phylogenetic networks, such as tree-child, orchard, and tree-based.
These classes capture different notions of tree-likeness: tree-child networks require every internal vertex to have a taxon reachable by a tree path, orchard networks are trees with horizontal arcs (for modelling histories rife with horizontal gene transfers), and tree-based networks are trees with additional (not-necessarily horizontal) arcs.
A natural question to ask is ``how far is a given network from belonging to a particular class?'' 
This motivates the study of proximity measures, which measure the minimum number of graph modifications required to transform a network into one belonging to a particular class.
In this paper, we consider three proximity measures based on leaf addition, valid arc deletion, and arc deletion.
We study pairwise comparability of the proximity measures, prove complexity results, and derive extremal bounds for the classes of tree, tree-child, orchard, and tree-based networks. 
}
\end{abstract}

\section{Introduction}\label{sec:intro}
Phylogenetic trees are used to represent the evolutionary history of species. While they are effective for illustrating speciation events through vertical descent, they are insufficient in representing more intricate evolutionary processes. Reticulate (net-like) events such as hybridization and horizontal gene transfer (HGT) can give rise to signals that cannot be represented on a single tree~\cite{goulet2017hybridization,wickell2020evolutionary}. In light of this, phylogenetic networks have received increasing attention due to their capability in elucidating reticulate evolutionary processes. 

\leonew{Since the space of all phylogenetic networks is huge and contains extremely complex and possibly unrealistic evolutionary histories,} phylogenetic networks are often categorized into different classes based on their topological features. These are often motivated computationally, but some classes are also defined based on their biological relevance~\cite{pardi2015reconstructible}. Classical examples of network classes involve the \emph{tree-child networks}~\cite{cardona2008comparison} and the \emph{tree-based networks}~\cite{francis2015phylogenetic}.
Roughly speaking, tree-child networks are those where every vertex has passed on a gene via vertical descent to an extant species, and tree-based networks are those obtainable from a tree by adding so-called \emph{linking arcs} between tree arcs. Recent developments have culminated in the introduction of \emph{orchard networks}, which lie -- inclusion-wise -- between the two aforementioned network classes~\cite{janssen2021cherry,erdHos2019class}. 
The class has been shown to be both algorithmically attractive and biologically relevant; they are defined as networks that can be reduced to a single leaf by a series of so-called \emph{cherry-picking operations}, and they were shown to be networks that can be obtained by adding horizontal arcs to trees (where the tree is drawn with the root at the top and arcs pointing downwards)~\cite{van2022orchard}. Such horizontal arcs can be used to model HGT events, making orchard networks especially apt in representing evolutionary scenarios where every reticulate event is a horizontal transfer. 

\leonew{However, none of these classes are exhaustive in the sense that all realistic evolutionary histories are necessarily contained in the class. Proximity measures were introduced to measure how far a given network is from belonging to a certain class~\cite{francis2018new}. Here we consider proximity measures based on certain graph operations. For example, it is well-known that any phylogenetic network can be transformed into an orchard network by adding leaves. These leaves can be interpreted as extinct, undiscovered or ignored species. One of the questions studied in this paper is: how many leaves need to be added to a given network to make it an orchard network?}


In this paper, we study four \leonew{network classes}: trees (i.e., networks without reticulations), tree-child networks, orchard networks, and tree-based networks. We determine the \textit{distance} of
\leonew{a given} network to each of these network classes via the following graph 
\leonew{operations}:
\textit {leaf addition}, \textit{arc deletion} \leonew{and \textit{valid arc deletion}}. Adding a leaf is the action of \leonew{subdividing an arc by a new vertex and adding a new arc from the subdividing vertex to a new leaf.}
\leonew{Valid arc deletion}
is the action of \leonew{deleting an arc and suppressing its endpoints such that the resulting graph is a valid phylogenetic network (i.e. it has no unlabelled outdegree-0 vertices and no parallel arcs).
An arc deletion is the action of deleting an arbitrary reticulation arc, and transforming the resulting graph into a phylogenetic network by repeatedly deleting unlabelled outdegree-$0$ vertices and suppressing parallel arcs and indegree-1 outdegree-1 vertices.}
From a biological standpoint, these measures could be helpful to indicate how many taxa are unsampled (leaf addition), or
\leonew{to account for noisy data}
(arc deletion), when one assumes the evolutionary events should be 
\leonew{explainable}
by a certain type of network class. For instance, if one hypothesizes that all reticulations are 
caused by HGT events, and the network is non-orchard, it could be helpful to study how many additional unsampled taxa ought to be added to the dataset such that all reticulate events 
\leonew{can be explained as HGT events}. Similarly, it would be interesting to investigate which arcs cause the network to be non-orchard. The arc deletion measure
\leonew{could be used to indicate which arcs are possibly due to noise in the data.}
For instance, if the network is constructed using a multilocus tree data set, the trees containing the removed arcs may be \leonew{(partly)} incorrect.
Given that HGT is the primary driver of reticulate evolution in bacteria~\cite{gyles2014horizontally}, determining the leaf addition and valid arc deletion distances is a vital inquiry. To illustrate, we provide a network of a few fungi species in \Cref{fig:fungi_added_leaf} which requires one additional leaf to make it orchard. 
\begin{figure}
    \centering
    \includegraphics{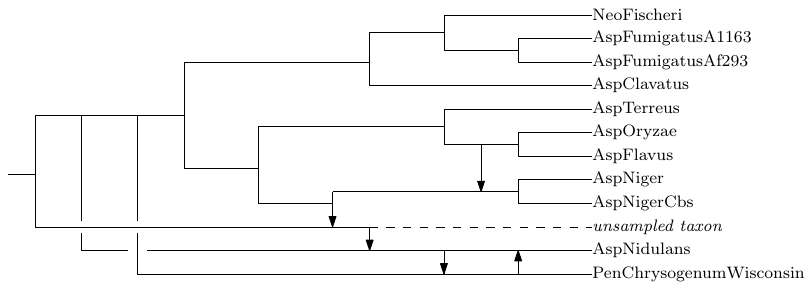}
    \caption{A network on 11 different \leo{taxa} (excluding the \emph{unsampled taxon}) of fungi \leo{including 5 reticulations, which is part of a larger network} from~\cite{szollHosi2015genome}. The directed arcs in the figure are linking arcs, \leo{which represent gene transfer highways}.
    \leo{In order to make all linking arcs horizontal}, we require an additional leaf (\textit{unsampled taxon}) to represent the evolutionary history. 
    }
    \label{fig:fungi_added_leaf}
\end{figure}

\leonew{There have been several previous papers considering proximity measures for phylogenetic network classes.} 
It was shown in~\cite{francis2018new} that networks can be made tree-based by adding leaves; the minimum number of leaves required to do so can be computed in polynomial time~\cite{hayamizu2021structure}.
In the former paper, this leaf addition measure was shown to be equivalent to two other proximity measures, which are based on spanning trees and disjoint path partitions. 
For orchard networks, a recent BSc thesis compared how the leaf addition proximity measure differs in general to another proximity measure based on arc deletions~\cite{susanna2022making}. 
\ym{This was explored further in a conference paper~\cite{van2023making}, of which this paper is an extended version. 
In that conference paper, the focus was on leaf addition measures for the classes of tree-child, orchard, and tree-based networks, with the main result being that computing the minimum leaf addition measure is NP-hard for orchard networks.
Here, we additionally consider arc deletion and valid arc deletion measures, study the comparability of pairwise measures for several network classes, and prove that computing the minimum valid arc deletion measure is NP-hard. 
The latter proof uses the same reduction as for the NP-hardness result for the leaf addition measure.
This final result also 
appeared recently in \cite{li2026computing}, which was obtained independently.
The shared author joined the preparation of this manuscript 
after these results had been obtained. 
}

The same question was posed for the unrooted variant (where the arcs of the network are undirected), for which the problem turned out to be NP-complete~\cite{fischer2020tree}. A total of eight proximity measures were introduced in this paper, including ones based on edge additions and rearrangement moves. 
In addition to leaf additions, leaf deletions (and, more generally, vertex deletions)
\leonew{have been considered for}
proximity measures for the class of so-called \emph{edge-based networks}~\cite{fischer2023far}. 

\paragraph{Our contributions.}
\ym{In this paper, we first attempt to answer the question ``are some proximity measures pairwise comparable?''
Oftentimes the answer is no (\Cref{thm:IncompLorA*or,thm:IncompLorAor,thm:IncompLTBATB}), though some measures are indeed comparable (\Cref{thm:ComparableLTBA*TB,thm:CompLTCATC}).
Then we look at each of the leaf addition and valid arc deletion measures separately. 
We show that the leaf addition measures are linear-time solvable for tree-child (\Cref{thm:L_TCisPoly}) and tree-based networks (shown in \cite[Corollary 5.4]{hayamizu2021structure}, but we include it here for completeness in \Cref{thm:L_TB=polynomial}).
We also give extremal bounds. 
Given a network with $r$ reticulations, we show that the leaf addition distances are at most $\lfloor (3r-1)/2 \rfloor$, $r-1$, and $\lfloor (r-1)/2\rfloor$, for tree-child, orchard, and tree-based networks, respectively, and show that these upper bounds are tight (\Cref{thm:L_TCBound}, \Cref{thm:LORUpperBound}, and \Cref{thm:L_TBBound}).
We show that computing this measure is NP-hard for orchard networks (\Cref{thm:L_Or=Hard}).
This same reduction is used to show that the valid arc deletion measure is NP-hard to compute for orchard networks~(\Cref{thm:A*_Or=Hard}).
For valid arc deletions, we also give a necessary and sufficient condition, formulated in terms of so-called \emph{one source one sink (OSOS) subgraphs}, for the proximity measure to the classes of trees, tree-child networks, and orchard networks to be finite (\Cref{cor:FiniteImpliesOSOSTCO}). 
We also give a polynomial-time algorithm to determine if a network contains an OSOS-subgraph, based on the Lengauer-Tarjan algorithm to find so-called \emph{dominator trees} (\Cref{thm:FindOSOSCorrect}). 
}

\paragraph{Structure.}
\ym{In \Cref{sec:preliminaries} we give relevant definitions and results from other works.
In \Cref{sec:ProximityMeasures}, we formally define the three proximity measures based on leaf addition, valid arc deletion, and arc deletion.
In \Cref{sec:comp}, we provide results on the comparability of the three proximity measures for certain network classes.
In \Cref{sec:LeafProximity}, we give complexity results and extremal bounds on the leaf addition measures.
In \Cref{sec:A*Deletion}, we determine when a network has finite valid arc deletion measure by considering OSOS-subgraphs.
In \Cref{sec:Hardness}, we show that computing the leaf addition and valid arc deletion measures for the class of orchard networks is NP-hard, by reduction from \textsc{Degree-3 Vertex Cover}.
In \Cref{sec:discussion}, we give concluding remarks and discuss potential future directions.}

\section{Preliminaries}
\label{sec:preliminaries}
A \emph{binary directed phylogenetic network} on a non-empty set~$X$ is a 
directed acyclic graph with
\begin{itemize}
    \item a single \emph{root} of indegree-0 and outdegree-1;
    \item \emph{tree vertices} of indegree-1 and outdegree-2;
    \item \emph{reticulations} of indegree-2 and outdegree-1;
    \item \emph{leaves} of indegree-1 and outdegree-0, that are labelled 
    bijectively by elements of~$X$.
\end{itemize}

For the sake of brevity, we shall refer to binary directed phylogenetic 
networks simply as \emph{networks}. Throughout the paper, we assume that~$N$ is a 
network on some non-empty set~$X$ where~$|X| = n$, unless stated otherwise.
Networks without reticulations are called \emph{trees}. Tree vertices and reticulations are  collectively referred to as \emph{internal vertices}.

The arc~$uv$ of a network is a \emph{root arc} if~$u$ is the root of the 
network. An arc~$uv$ of a network is a \emph{reticulation arc} if~$v$ is a reticulation, 
and a \emph{tree arc} otherwise. We say that a vertex~$u$ is a \emph{parent} of 
another vertex~$v$ if~$uv$ is an arc of the network; in such instances we 
call~$v$ a \emph{child} of~$u$. Also, we say that~$u$ and~$v$ are the 
\emph{tail} and the \emph{head} of the arc~$uv$, and denote them as~$\tail(uv)$ and~$\head(uv)$, respectively. 
In other words, 
we may rewrite arcs as~$uv = \tail(uv)\head(uv)$. The \emph{neighbours} of~$v$ 
refer to the set of vertices that are parents or children of~$v$.
We also say that vertices~$u$ and~$v$ are \emph{siblings} if they share the same parent.

In what follows, we shall define graph operations based on vertex and arc 
deletions. To make sure the resulting graphs are networks, we follow-up every 
graph operation with a \emph{cleaning up} process. Formally, we \emph{clean up} 
a network by applying the following until none is applicable.
\begin{itemize}
    \item Delete an unlabelled outdegree-0 vertex.
    \item Suppress an indegree-1 outdegree-1 vertex (e.g., if $uv$ and $vw$ are 
    arcs where~$v$ is an indegree-1 outdegree-1 vertex, we suppress~$v$ by 
    deleting the vertex~$v$ and adding an arc~$uw$).
    \item Replace parallel arcs by a single arc (e.g., if~$uv$ is an arc twice 
    in a network, 
    delete one of the arcs $uv$).
\end{itemize}
We observe that deleting a non-reticulation arc and cleaning up results in a 
graph containing two indegree-0 vertices. On the other hand, deleting a 
reticulation arc and cleaning up results in a network. Therefore, we shall use arc deletions to mean reticulation arc deletions. 
In the following, we introduce the different kinds of networks we study in this paper.


\subsection{Tree-Child Networks} \label{subsec:prelim_TC}
A network is \emph{tree-child} if every non-leaf vertex has a child that is a tree vertex or a leaf. 
We call an internal vertex of a network an \emph{omnian} if all of its children are reticulations~\cite{jetten2016nonbinary}. It follows from the definition that a network is tree-child if and only if it contains no omnians.

\subsection{Orchard Networks} \label{sec:orchard}





To define orchard networks, we must first define cherries and reticulated cherries, as well as operations to reduce them; see \Cref{fig:EGSequence} for the illustration of the following definitions.
Let~$N$ be a network. Two leaves~$x$ and~$y$ of~$N$ form a \emph{cherry} if they are siblings. In such a case, we say that~$N$ \emph{contains} (an ordered) cherry~$(x,y)$ or~$(y,x)$. Two leaves~$x$ and~$y$ of~$N$ form a \emph{reticulated cherry} if the parent~$p_x$ of~$x$ is a reticulation and the parent~$p_y$ of~$y$ is also a parent of~$p_x$. In such a case, we say that~$N$ \emph{contains} a reticulated cherry~$(x,y)$. \emph{Reducing the cherry~$(x,y)$ from~$N$} is the process of deleting the leaf~$x$ and cleaning up. \emph{Reducing the reticulated cherry~$(x,y)$ from~$N$} is the process of deleting the arc from the parent of~$y$ to the parent of~$x$, thus deleting $p_y p_x$, and cleaning up. In both cases, we use~$N(x,y)$ to denote the resulting network.

A network~$N$ is \emph{orchard} if there is a sequence~$S = (x_1,y_1)(x_2,y_2)\cdots(x_k,y_k)$
such that~$NS$ is a network 
on a single leaf~$y_k$. It has been shown that the order in which (reticulated) cherries are reduced does not matter~\cite{erdHos2019class,janssen2021cherry}.
Apart from this recursive definition, orchard networks have been characterized based on cherry covers (arc decompositions)~\cite{van2021unifying} and vertex 
labellings~\cite{van2022orchard}. We include both characterizations here.

\begin{figure}
    \centering
    \includegraphics{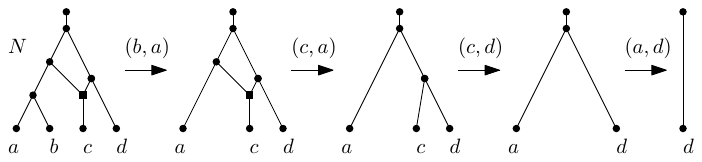}
    \caption{An example of an orchard network~$N$ that is reduced by a sequence $(b,a)(c,a)(c,d)(a,d)$.
    The network~$N$ contains a cherry $(b,a)$ and a reticulated cherry $(c,d)$.
    Subsequent networks are obtained by a single cherry-picking reduction from the previous network. For example, the second network $N(b,a)$ is obtained from $N$ by removing the leaf $b$ and cleaning up.}
    \label{fig:EGSequence}
\end{figure}

\paragraph{Cherry covers (see \cite{van2021unifying} for more details):} 
Let~$N$ be a network. 
A \emph{cherry shape} in~$N$ is a subgraph on three distinct vertices~$x, y, p$ with 
arcs~$px$ and~$py$. 
The \emph{internal vertex} of a cherry shape is $p$, and 
the \emph{endpoints} are~$x$ and~$y$. A \emph{reticulated cherry shape} in~$N$ is a 
subgraph on four distinct vertices~$x, y, p_x, p_y$ with arcs~$p_xx, p_yp_x, 
p_yy$, such that~$p_x$ is a  reticulation in the network~$N$. The internal vertices 
of a reticulated cherry shape are~$p_x$ and~$p_y$, and the endpoints are~$x$ 
and~$y$. 
The \emph{middle arc} of a 
reticulated cherry shape is
$p_yp_x$. 
We will often refer to (reticulated) cherry shapes by their arcs (e.g., we would denote the above cherry shape~$\{px, py\}$ and the reticulated 
cherry shape~$\{p_xx, p_yp_x, p_yy\}$). We say that an arc~$uv$ is covered by 
a cherry or reticulated cherry shape~$C$ if~$uv \in C$. 
A \emph{cherry cover} of a binary network is a set $P$ of cherry shapes and reticulated cherry shapes, such that each arc except for the root arc is covered exactly once by~$P$.
In general, a network can have more than one cherry cover.

We define the \emph{cherry cover auxiliary graph}~$G=(V,A)$ of a cherry cover~$P$ as follows. For each shape~$B \in P$, we have a vertex~$v_B \in V$ in the auxiliary graph $G$. A shape~$B \in P$ is \emph{directly above} another shape~$C \in P$ if $B$ and $C$ contain the same vertex $v$, such that $v$ is an endpoint of $B$ and an internal vertex of $C$. Then,~$v_Bv_C \in A$. (adapted from \cite[Definition 
2.13]{van2021unifying}). We say that a cherry cover is \emph{cyclic} if its auxiliary graph has a cycle. We call it \emph{acyclic} otherwise.
See \Cref{fig:cherry_cover_example} for an 
illustration of a cyclic and acyclic cherry cover. 

\begin{figure}
    \centering
        \subfloat[]{\includegraphics[height=4cm]{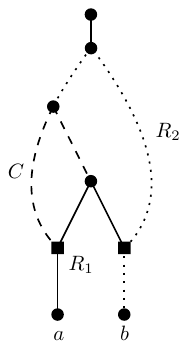} \label{subfig:cc_a}}
        \hspace{2mm}
        \subfloat[]{\includegraphics[height=1.2cm]{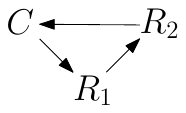} \label{subfig:cc_b}}
        \hspace{5mm}
        \subfloat[]{\includegraphics[height=4cm]{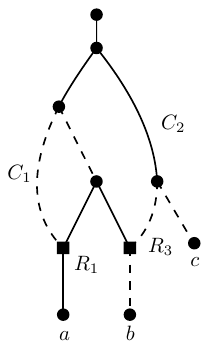} \label{subfig:cc_c}}
        \hspace{2mm}
        \subfloat[]{\includegraphics[height=1.2cm]{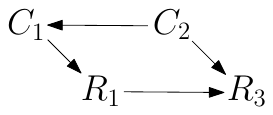} \label{subfig:cc_d}}
    \caption{An example of a cherry cover. 
    (a) A network~$N_1$ on~$\{a, b\}$ with a cherry cover~$\{C,R_1,R_2\}$.
    (b) The (cyclic) auxiliary graph of~$N_1$ based on the cherry cover of (a). 
    (c) The network~$N_2$ obtained from~$N_1$ by adding a leaf $c$, with a cherry cover~$\{C_1,C_2,R_1,R_3\}$
    (d) The (acyclic) auxiliary graph of~$N_2$ based on the cherry cover of (c).}
    \label{fig:cherry_cover_example}
\end{figure}

\begin{theorem}[Theorem 4.3 of 
\cite{van2021unifying}]\label{thm:AcyclicCherryCover}
    A network~$N$ is orchard if and only if it has an acyclic cherry cover.
\end{theorem}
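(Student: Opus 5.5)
We prove both directions by induction on the number of arcs of $N$, with the cherry-picking characterization as the bridge: a cherry or reticulated cherry of $N$ corresponds to a shape that is a source (no incoming arcs) of the auxiliary graph. Throughout we use that the order of cherry reductions does not matter, so any available reduction can be applied first.

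\emph{Orchard implies acyclic cover.} Suppose $N$ is orchard with reducing sequence $S=(x_1,y_1)(x_2,y_2)\cdots$. We induct on the length of $S$; the base case is the single-leaf network, which has only the root arc and the empty cover.

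For the step, reduce $(x_1,y_1)$ to obtain $N'=N(x_1,y_1)$. Since $N'$ is orchard, by induction it has an acyclic cover $P'$. We lift $P'$ to $N$ as follows.
\begin{itemize}
\item \emph{Cherry case.} $N$ is obtained from $N'$ by subdividing the arc $uy_1$ into $uw$ and $wy_1$ and adding the arc $wx_1$. The shape of $P'$ containing $uy_1$ is modified to contain $uw$ instead, and we add the new cherry shape $\{wx_1,wy_1\}$.
\item \emph{Reticulated cherry case.} Two arcs of $N'$ are subdivided and the middle arc $p_yp_x$ is added. The shapes containing the subdivided arcs are modified analogously, and we add the new reticulated cherry shape $\{p_xx_1,p_yp_x,p_yy_1\}$. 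Here the arc into $p_x$ from its other parent keeps its old shape with endpoint replaced by $p_x$.
\end{itemize}
In both cases every non-root arc is covered exactly once. In the lifted auxiliary graph, the new shape has only incoming arcs, from the shapes that previously had $x_1$ or $y_1$ (now $p_x$, $p_y$, $w$) as an endpoint. It has no outgoing arcs, because its endpoints are leaves. All other adjacencies are inherited from the auxiliary graph of $P'$. A sink cannot lie on a cycle, so the lifted graph is acyclic.

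\emph{Acyclic cover implies orchard.} Suppose $N$ has an acyclic cover $P$, and take a shape $C$ that is a sink of the auxiliary graph. Then no endpoint of $C$ is the internal vertex of another shape.

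We claim both endpoints of $C$ are leaves. Every non-leaf vertex other than the root has an outgoing arc, and that arc is covered by some shape in which the vertex is internal: in a cherry shape the tail is internal, and in a reticulated cherry shape the tails $p_y$ and $p_x$ are internal. So an endpoint that is not a leaf would be internal in some other shape, giving an outgoing arc from $C$ and contradicting that $C$ is a sink.

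It follows that $C$ is a cherry (if it is a cherry shape) or a reticulated cherry, with $p_x$ a reticulation by definition. Reduce it. Deleting the shape and cleaning up merges the shapes around the suppressed vertices, which is exactly the reverse of the lifting above, and yields a cover of the smaller network. Its auxiliary graph is obtained from the old one by deleting a sink and contracting nothing new, so it remains acyclic. By induction the smaller network is orchard, hence so is $N$.

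\emph{Main obstacle.} The delicate part is the cleanup bookkeeping: checking that suppressing degree-2 vertices merges exactly two arcs belonging to shapes in a consistent way. The reticulated-cherry reduction makes $p_x$ and $p_y$ degree-2, and I must check that this produces no parallel arcs, or handle parallel arcs separately. I would verify this by a careful case analysis of which shapes contain the arcs incident to the suppressed vertices.
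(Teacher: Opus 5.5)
Your proof is correct and follows essentially the same route as the justification the paper sketches for this cited result. A sink of the auxiliary graph must have leaf endpoints, so it is a pickable cherry or reticulated cherry, and reducing it deletes that sink without creating new adjacencies; conversely, re-inserting a picked (reticulated) cherry adds a new sink to an acyclic auxiliary graph. The cleanup issue you flag is harmless, and two small corrections complete it:
\begin{itemize}
\item No parallel arcs can arise. The new arcs, from the other parent $q$ of $p_x$ to $x_1$ and from the parent $u$ of $p_y$ (or $w$) to $y_1$, end at leaves whose only parents were the suppressed vertices.
\item The suppressed vertices are always endpoints of the neighbouring shapes, so those shapes remain valid after merging. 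For the tree vertices $w$ and $p_y$ this is immediate. For $p_x$, the arc $qp_x$ cannot be a middle arc of its shape, because that shape would then also have to contain $p_xx_1$, which is already covered by $C$.
\item In your opening sentence, ``source (no incoming arcs)'' should read ``sink (no outgoing arcs)''. This matches how you actually use it later.
\end{itemize}
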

\ej{By definition, a network is orchard if we can fully reduce it with a cherry-picking sequence. The auxiliary graph determines the order of picking cherries. If there is an arc from $C_1$ to $C_2$, it means that $C_2$ needs to be picked before $C_1$ can be picked. When all cherry-cover auxiliary graphs of a network are cyclic, it means there is no `terminal' cherry. Each cherry waits for another cherry to be picked, which is impossible. If one is terminal ($R_3$ in \Cref{subfig:cc_c,subfig:cc_d}), and there are no other cycles in the auxiliary graph, we can pick a cherry corresponding to one of the terminal nodes and reduce the other cherries as well.}

\paragraph{Non-Temporal Labellings (see \cite{van2022orchard} for more details):}\label{par:nontemporallabelling}

Let~$N$ be a network with vertex set~$V(N)$. A \emph{non-temporal 
labelling}\footnote{This is named in contrast to \emph{temporal 
representations} of~\cite{baroni2006hybrids}. There, it was required for the 
endpoints of every reticulation arc to have the same label.} 
of~$N$ is a labelling~$t:V(N)\rightarrow\R$ such that
\begin{itemize}
	\item for all arcs $uv$, $t(u)\le t(v)$ and equality is allowed only if~$v$ 
	is a reticulation;
	\item for each internal vertex~$u$, there is a child~$v$ of~$u$ such 
	that~$t(u) < t(v)$;
	\item for each reticulation~$r$ with parents~$u$ and~$v$, at most one 
	of~$t(u)=t(r)$ or~$t(v)=t(r)$ holds.
\end{itemize}
Observe that every network (orchard or not) admits a non-temporal labelling by labelling each vertex by its longest distance from the root (assuming each arc is of weight 1). 

Under non-temporal labellings, we call an arc \emph{horizontal} if its 
endpoints have the same label; we call an arc \emph{vertical} otherwise. By 
definition, only reticulation arcs can be horizontal. We say that a 
non-temporal labelling is an \emph{HGT-consistent labelling} if every 
reticulation is incident to exactly one incoming horizontal arc \cite[Definition 6]{van2022orchard}. We recall the following key result.

\begin{theorem}[Theorem 1 of~\cite{van2022orchard}]\label{thm:OrchIFFHori}
	A network is orchard if and only if it admits an HGT-consistent labelling.
\end{theorem}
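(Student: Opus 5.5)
The plan is to prove both directions by induction on the size of the network, using the cherry-picking definition of orchard networks. Two facts make this work. First, labels are real numbers, so there is always room to insert a new label strictly between two existing ones. Second, leaves have no children, so their labels can be raised arbitrarily without violating any condition. The base case is the network consisting of a root and a single leaf, which is orchard and trivially admits an HGT-consistent labelling (it has no reticulations).

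\emph{($\Rightarrow$)} Let $N$ be orchard with a reducing sequence whose first pair is $(x,y)$. By induction, $N'=N(x,y)$ has an HGT-consistent labelling $t'$, and I extend it to $N$.

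If $(x,y)$ is a cherry, $N$ is obtained from $N'$ by subdividing the arc $qy$ with a new vertex $p$ and adding the arc $px$. The arc $qy$ ends in a leaf, so it is vertical. I choose $t(p)\in(t'(q),t'(y))$ and set $t(x)>t(p)$. Then $p$ is a tree vertex with strictly larger children, and $q$ keeps a strictly larger child.

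If $(x,y)$ is a reticulated cherry, $N$ is obtained by subdividing $q_yy$ with $p_y$, subdividing $q_xx$ with $p_x$, and adding the arc $p_yp_x$. I set $t(p_y)=t(p_x)=c$ with $c>\max(t'(q_x),t'(q_y))$, and then raise $t(x)$ and $t(y)$ above $c$. This makes $p_yp_x$ the unique horizontal incoming arc of the new reticulation $p_x$. Moreover, $p_y$ is a tree vertex all of whose incoming and outgoing arcs respect the strict-inequality requirements. The only care needed is the degenerate case $q_x=q_y$, where the two subdivided arcs are still distinct arcs.

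\emph{($\Leftarrow$)} Let $t$ be an HGT-consistent labelling of $N$ with at least two leaves. I would show that $N$ contains a cherry or reticulated cherry whose reduction leaves a network for which the restriction of $t$ is still HGT-consistent; induction on the number of arcs then finishes the proof.

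Let $m$ be the maximum label of a non-leaf, non-root vertex; if $N$ had no such vertex it would consist of a root and a single leaf, so one exists. Any child of a vertex with label $m$ is either a leaf or a reticulation with label $m$. A reticulation $r$ with $t(r)=m$ has a child of strictly larger label, which must therefore be a leaf. Its horizontal parent $p$ has $t(p)=m$, and $p$ cannot be a reticulation, since then its only child $r$ would not have a strictly larger label. Hence there is a tree vertex $p$ with $t(p)=m$. This vertex has at least one child with strictly larger label, which must be a leaf $y$. Its other child is either a leaf, giving a cherry, or a reticulation at level $m$ with a leaf child $x$, giving a reticulated cherry $(x,y)$.

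The main obstacle is checking that after reducing and cleaning up, the restricted labelling remains HGT-consistent. I would argue this case by case. Suppressing $p$ (respectively $p_x$ and $p_y$) produces new arcs $qy$ (respectively $q_xx$ and $q_yy$) into leaves; these new arcs are vertical, and every reticulation other than the removed one keeps its incident arcs and labels unchanged. Each parent $q$ had a strict inequality to $p$ or $p_y$, because those are tree vertices, or to $p_x$, because $q_xp_x$ is the vertical arc into $p_x$, so $q$ still has a child with strictly larger label. Finally, I would verify that no parallel arcs or unlabelled sinks arise, since the new arcs go to distinct leaves; if cleanup nevertheless triggered, the same label-restriction argument should still apply.
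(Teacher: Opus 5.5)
The paper does not prove this statement. It imports it as Theorem~1 of \cite{van2022orchard} and uses it as a black box, for example to certify that $M$ is orchard in \Cref{lem:VCiffLORD}. So there is no in-paper argument to compare yours against.

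Your proposal is a correct, self-contained proof by cherry-picking induction in both directions.
\begin{itemize}
\item In ($\Rightarrow$), the freedom to choose real labels does all the work: a value strictly between $t'(q)$ and $t'(y)$ for a cherry; a common value $c>\max(t'(q_x),t'(q_y))$ for a reticulated cherry, followed by lifting the two leaf labels. This satisfies every local constraint and makes $p_yp_x$ the unique horizontal arc into $p_x$.
\item In ($\Leftarrow$), the argument at the maximum non-leaf, non-root label $m$ correctly produces a cherry, or a reticulated cherry whose middle arc is the horizontal arc into $p_x$. The strict inequalities $t(q)<t(p)$, $t(q_y)<t(p_y)$ and $t(q_x)<t(p_x)$ then make the arcs created by suppression vertical, and give each tail a strictly larger child.
\item Your route uses only the given reduction sequence in one direction and constructs one in the other. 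It therefore never needs the order-independence of cherry picking from \cite{janssen2021cherry}.
\end{itemize}

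Three small tidy-ups are worth making.
\begin{enumerate}
\item In ($\Leftarrow$), the hypothesis should be that $N$ is not the trivial network (a root and one leaf), not that $N$ has at least two leaves. A one-leaf network with reticulations is not orchard, so you must show it has no HGT-consistent labelling. Your argument already does this: it only uses the existence of a non-leaf, non-root vertex, and it then produces two distinct leaves $x\neq y$, a contradiction. Say so explicitly.
\item When you take the horizontal parent $p$ of a reticulation with label $m$, you must also rule out $p$ being the root. This is immediate: the root's child is never a reticulation, since its other parent would be a descendant of it and would create a cycle.
\item The hedge ``if cleanup nevertheless triggered'' can be replaced by a definite statement. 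In both reductions the only cleanup steps are the suppressions of $p$ (respectively $p_x$ and $p_y$). The new arcs end at leaves whose unique parent is the new tail, so no parallel arcs or unlabelled sinks arise. When $q_x=q_y$, that vertex simply ends up with the two distinct children $x$ and $y$.
\end{enumerate}
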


Intuitively, \Cref{thm:OrchIFFHori} says that every orchard network is a phylogenetic tree with additional horizontal arcs. 
We return to the fungi network that we saw in \Cref{sec:intro}.

\begin{remark}\label{rem:NoAddedLeaf}

    We first elaborate on why we need an added leaf (\textit{unsampled taxon}) in the network of \Cref{fig:fungi_added_leaf} to ensure that the network admits an HGT-consistent labelling. We know that a network has an HGT-consistent labelling if and only if it is orchard (\Cref{thm:OrchIFFHori}). Let~$N$ be the network without \textit{unsampled taxon} (see~\Cref{fig:fungi_without_added_leaf}). We will show that~$N$ is not orchard. To see this, note that the order in which cherries and reticulated cherries are reduced does not matter \cite{janssen2021cherry}. This means that if~$N$ were orchard, then there would exist a cherry-picking sequence starting with
    \[(AspNidulans, PenChrysogenumWisconsin)(PenChrysogenumWisconsin, AspNidulans).\]
    After reducing these cherries, the distance between the leaf~$AspNidulans$ and any other leaf remains at a distance of at least~$4$, regardless of other reductions that take place in the network. This shows that the network cannot be orchard, and therefore the network cannot have an HGT-consistent labelling.
\end{remark}

\begin{figure}
    \centering\includegraphics{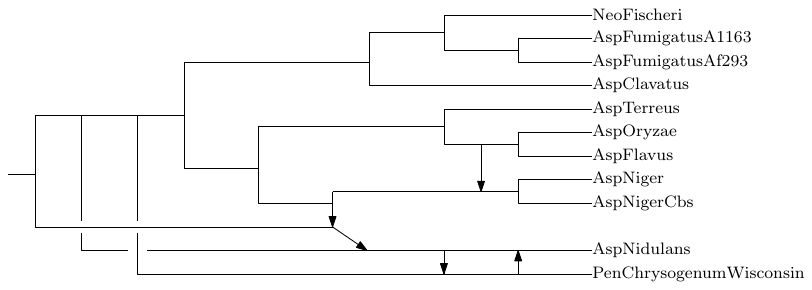}
    \caption{The network of \Cref{fig:fungi_added_leaf} without the added leaf. Observe that there exists no HGT-consistent labelling for the network, by the arguments provided in \Cref{rem:NoAddedLeaf}.}
    \label{fig:fungi_without_added_leaf}
\end{figure}

\subsection{Tree-Based Networks}

A network~$N$ is \emph{tree-based} with \emph{base tree}~$T$ if it can be 
obtained from~$T$ in the following steps \cite{francis2015phylogenetic}.
\begin{enumerate}
	\item Replace some arcs of~$T$ by paths, whose internal vertices we call 
	\emph{attachment points}; each attachment point is of indegree-1 and 
	outdegree-1.
	\item Place arcs between attachment points, called \emph{linking arcs}, so 
	that the graph contains no vertices of total degree greater than $3$, and 
	so that it remains acyclic.
	\item Clean up.
\end{enumerate}

We include here a static characterization of tree-based networks based on an arc partition, called \emph{maximum zig-zag 
trails}~\cite{hayamizu2021structure, zhang2016tree}. 
Let~$N$ be a network. A \emph{zig-zag trail} of length~$k$ is a sequence~$(a_1,a_2,\ldots, a_k)$ of arcs where~$k\ge 1$, and either~$\tail(a_i) = \tail(a_{i+1})$ 
or~$\head(a_i)=\head(a_{i+1})$ holds for~$i\in [k-1] = \{1,2,\ldots, k-1\}$. We 
call a zig-zag trail~$Z$ \emph{maximal} if there is no zig-zag trail that contains~$Z$ as a subsequence. Depending on the nature of~$\tail(a_1)$ and~$\tail(a_k)$, we have four possible maximal zig-zag trails. 
\begin{itemize}
	\item \emph{Crowns}: $k\ge 4$ is even and $\tail(a_1) = \tail(a_k)$ or 
	$\head(a_1) 
	= \head(a_k)$.
	\item \emph{M-fences}: $k\ge 2$ is even, it is not a crown, and $\tail(a_i)$ is a tree vertex for every~$i\in[k]$.
	\item \emph{N-fences}: $k\ge 1$ is odd and $\tail(a_1)$ or~$\tail(a_k)$, but not both, is 
	a reticulation. By reordering the arcs, assume henceforth that~$\tail(a_1)$ is a reticulation and~$\tail(a_k)$ a tree vertex.
	\item \emph{W-fences}: $k\ge 2$ is even and both~$\tail(a_1)$ 
	and~$\tail(a_k)$ are 
	reticulations.
\end{itemize}
We call a set~$S$ of maximal zig-zag trails a \emph{zig-zag decomposition} 
of~$N$ if the elements of~$S$ partition all arcs, except for the root arc, 
of~$N$.

\begin{theorem}[adapted from Theorem 4.2 of 
\cite{hayamizu2021structure}]\label{thm:UniqueMaxZigZag}
	Any network~$N$ has a unique zig-zag decomposition.
\end{theorem}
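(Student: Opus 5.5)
We prove the statement in two parts: existence and uniqueness. The key tool is an auxiliary relation on the set $A'$ of all non-root arcs of $N$. For arcs $a,b \in A'$, declare $a \sim b$ if $\tail(a)=\tail(b)$ or $\head(a)=\head(b)$, and let $\approx$ be the transitive closure of $\sim$. This is an equivalence relation, so its classes partition $A'$.

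\textbf{Existence.} We show that each equivalence class is the arc set of exactly one maximal zig-zag trail.

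First, consider a fixed class $K$ and its incidence structure. Each tail $u$ with $u$ not the root has at most two outgoing arcs in $A'$, since tree vertices have outdegree $2$ and reticulations have outdegree $1$. Each head has at most two incoming arcs, since reticulations have indegree $2$ and all other vertices have indegree $1$.

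Next, build an auxiliary graph $H_K$ on $K$. Join $a$ and $b$ by an edge of \emph{tail type} if they share a tree-vertex tail. Join them by an edge of \emph{head type} if they share a reticulation head. Each arc has at most one tail-type edge and at most one head-type edge. Hence $H_K$ has maximum degree $2$, and its edges alternate between the two types along any walk. Since $K$ is a connected class, $H_K$ is connected, so it is either a path or a cycle.

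Traversing $H_K$ yields a sequence $(a_1,\dots,a_k)$ in which consecutive arcs share a tail or a head; this is a zig-zag trail using each arc of $K$ exactly once. It is maximal, because any arc extending it would be $\sim$-related to some arc of $K$ and therefore would already lie in $K$.

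We also check that the trail falls into one of the four listed types. If $H_K$ is a cycle, then by alternation its length is even. It must be at least $4$, since two arcs sharing both tail and head would be parallel arcs, which a network does not have. So a cycle gives a crown.

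If $H_K$ is a path, look at the end arcs. An end arc $a_1$ lacks a tail-type neighbour exactly when $\tail(a_1)$ is a reticulation, because a tree-vertex tail always has a sibling arc. An end arc lacks a head-type neighbour exactly when $\head(a_1)$ is not a reticulation. Parity of the path length, combined with these endpoint conditions, then gives an M-fence, an N-fence or a W-fence, exactly matching the definitions. In the odd case, exactly one end tail is a reticulation, which gives an N-fence.

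\textbf{Uniqueness.} Let $S$ be any zig-zag decomposition. Each trail in $S$ has consecutive arcs that are $\sim$-related, so each trail lies inside a single $\approx$-class.

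Conversely, suppose $a \sim b$ with $a$ in trail $Z\in S$. Then $b$ could be appended to $Z$ or inserted into it, which would contradict maximality unless $b$ already lies in $Z$. This needs care: $a$ may be interior to $Z$, and then its tail and head neighbours in $H_K$ are already used by the neighbouring arcs of $Z$, which are forced to be those neighbours by the degree bound. So in every case $b \in Z$, and each trail of $S$ is a full $\approx$-class. Consequently, the set of arcs of each trail is determined, so $S$ is unique up to reversal of each sequence, and reversing a sequence does not change the trail as an arc set.

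\textbf{Main obstacle.} The delicate step is the maximality and degree argument: showing that a maximal trail cannot stop while a $\sim$-neighbour remains outside it. This amounts to the observation that each arc has at most one partner of each type, so any trail visiting a class is forced along the unique path or cycle $H_K$.
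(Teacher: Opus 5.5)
The paper gives no proof of its own here: it imports the statement from Hayamizu's structure theorem, so the citation is the only thing to compare against. Your argument is a correct, self-contained proof. It rests on the binarity fact that drives the result: each non-root arc has at most one other arc sharing its tail and at most one sharing its head. Hence the $\approx$-classes are paths or even cycles in $H_K$, and every trail of any decomposition is forced to be a whole class.

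Compared with the citation, your route buys two extra things. First, it justifies the paper's unproved remark that every maximal zig-zag trail is a crown or an M-, N- or W-fence, via your parity and end-arc analysis. Second, it makes the linear-time computation invoked in \Cref{thm:L_TB=polynomial} immediate, since the decomposition is just the connected components of a graph of maximum degree $2$.

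Your argument silently relies on two conventions that should be stated. Trails must consist of distinct arcs: under the paper's literal definition, $(a_1,\dots,a_k,a_k)$ is again a zig-zag trail, so no trail would be maximal. Uniqueness must be meant for the partition into arc sets, not for the sequences. Your phrase ``unique up to reversal'' is slightly off for crowns, which also admit every cyclic rotation; what you actually prove (each trail of $S$ is exactly one $\approx$-class) is the right statement.

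Similarly, for $k=1$ the paper's wording ``$\tail(a_1)$ or $\tail(a_k)$, but not both'' degenerates. Your reading of a singleton class as an arc from a reticulation to a non-reticulation is the intended length-$1$ N-fence.
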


\ej{The following lemma gives the characterization based on the zig-zag decomposition of a network.}

\begin{lemma}[adapted from Corollary 4.6 of \cite{hayamizu2021structure}]\label{lem:TB=NoW}
    Let~$N$ be a network. Then~$N$ is tree-based if and only if it has no 
    W-fences.
\end{lemma}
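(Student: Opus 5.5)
We need to prove: $N$ is tree-based if and only if its zig-zag decomposition (unique by \Cref{thm:UniqueMaxZigZag}) has no W-fences. The plan is to use the standard reformulation that $N$ is tree-based exactly when it has a \emph{base-tree arc set}. This is a set $A$ of arcs containing the root arc such that every reticulation has exactly one incoming arc in $A$ and every tree vertex has at least one outgoing arc in $A$. The arcs of $A$ then form a spanning tree whose leaves are all in $X$, and the remaining arcs are the linking arcs.

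I would first verify this reformulation from the definition. Given a base tree, take $A$ to be the arcs coming from subdivided arcs of $T$. Conversely, from such an $A$ the subgraph $(V,A)$ is connected with each non-root vertex of indegree $1$, hence a spanning tree. Its only outdegree-$0$ vertices are leaves of $N$, because tree vertices keep an outgoing arc and each reticulation keeps its unique child arc, which is a tree arc. Deleting the remaining arcs and suppressing degree-2 vertices gives $T$.

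Next I translate the conditions to single zig-zag trails, since every internal vertex has all its incoming arcs in one trail and all its outgoing arcs in one trail. Within a trail $(a_1,\dots,a_k)$, consecutive arcs alternately share a head, which is then a reticulation, and share a tail, which is then a tree vertex. Every arc with a tree-vertex head, a leaf head, or a reticulation tail lies in a trail by itself, or at an end of a trail. Choosing $A$ amounts to choosing, in each trail, a set of arcs such that each shared head gets exactly one chosen arc, each shared tail gets at least one, and forced arcs are chosen. Two kinds of arcs are forced: an arc whose tail is a reticulation, since it is the unique outgoing arc, and an arc whose head is not a reticulation. The trails decouple, so $N$ is tree-based if and only if each trail admits a valid choice.

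Now I do a case analysis on the four trail types by counting. Index the trail as a path on which the shared vertices alternate between head-sharing and tail-sharing junctions. For an M-fence, whose end arcs have tree-vertex tails and reticulation heads shared with their neighbours, choose the arcs $a_1,a_3,\dots$ appropriately. Each reticulation junction has two arcs and takes exactly one, and each tail junction takes at least one, which works since $k$ is even and the ends are tail-free. For an N-fence, start with the forced arc $a_1$, whose tail is a reticulation, and take every other arc; the odd length makes it end correctly at the tree-vertex end. For a crown, alternate around the cycle. For a W-fence, $a_1$ and $a_k$ are both forced, since their tails are reticulations. The heads $h(a_1)=h(a_2)$ and $h(a_{k-1})=h(a_k)$ are reticulations, so $a_2$ and $a_{k-1}$ are excluded. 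Propagating along the path, each tail junction then forces the next arc and each head junction excludes the one after it. With $k$ even this yields a contradiction at the middle: two adjacent arcs sharing a reticulation head would both be forced, or a tree vertex would have neither outgoing arc chosen.

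The main obstacle is this parity propagation for W-fences, and making the junction structure of each trail type precise, in particular handling the degenerate short trails ($k=1,2$) and confirming that every vertex's constraints live entirely within one trail. I would handle it by an explicit induction along the trail showing that arc $a_i$ is forced exactly when $i$ is odd counting from the left and also exactly when $i$ is odd counting from the right. These two requirements are incompatible when $k$ is even.
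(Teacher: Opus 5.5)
Your proof is correct in substance. The paper does not prove this lemma itself; it imports it from Corollary 4.6 of \cite{hayamizu2021structure}. Your argument follows essentially the same route as that source (and Zhang's earlier characterization):
\begin{itemize}
\item recast tree-basedness as the existence of a spanning arborescence whose leaves are exactly $X$;
\item observe that every constraint on its arc set lives inside a single maximal zig-zag trail;
\item check that W-fences are the only trails admitting no local choice.
\end{itemize}

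Two slips should be fixed in a write-up.

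First, the conditions defining $A$ should read: every non-root vertex has exactly one incoming arc in $A$, and every non-leaf vertex, reticulations included, has at least one outgoing arc in $A$. As you first state them, the requirement that a reticulation's outgoing arc lies in $A$ is missing. That requirement is exactly what makes a W-fence an obstruction. Your later list of forced arcs does include it, so the argument survives. However, your remark that a reticulation's child arc ``is a tree arc'' is false: at both ends of a W-fence, and at the start of any N-fence of length at least $3$, it is a reticulation arc.

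Second, your description of M-fences is off. Their end arcs share their tree-vertex tails with their neighbours and have free non-reticulation heads. Hence both $a_1$ and $a_k$ are forced, and a valid choice is $\{a_1,a_3,\dots,a_{k-1},a_k\}$, not the odd-indexed arcs alone.
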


\ej{The next theorem provides another characterization of tree-based networks, based on cherry covers that were introduced in the previous section. }
\begin{theorem}[adapted from Theorem 3.3 of \cite{van2021unifying}]\label{thm:TB=ChCover}
    Let~$N$ be a network. Then~$N$ is tree-based if and only if it has a cherry cover.
\end{theorem}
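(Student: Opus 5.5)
The two directions are best handled separately. For ``tree-based $\Rightarrow$ cherry cover'' I would work from a base tree; for the converse I would go through \Cref{lem:TB=NoW}.

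\emph{Tree-based implies a cherry cover.} Let $N$ be tree-based with base tree $T$. Fix a spanning subgraph of $N$ realising $T$: for every reticulation we keep exactly one incoming arc as a tree arc and declare the other one a linking arc. After this, every non-root vertex has a unique incoming tree arc. The tree arcs then group naturally at their tails. A tree vertex $p$ both of whose out-arcs are tree arcs gives a cherry shape $\{px,py\}$. The remaining case is a linking arc $uv$, where $v$ is a reticulation and $u$ is a tree vertex (the tail of a linking arc is an attachment point, hence a tree vertex after clean-up). Let $x$ be the child of $v$ and $y$ the other child of $u$. We form the reticulated cherry shape $\{vx, uv, uy\}$.

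I would then check that each arc other than the root arc is covered exactly once. This comes down to the following case analysis:
\begin{itemize}
\item each out-arc of a tree vertex is used exactly once, either in the cherry shape at that vertex or as the middle arc or side arc of the reticulated shape through the linking arc leaving it;
\item each arc leaving a reticulation is used exactly once, by the unique linking arc entering that reticulation;
\item the tree-arc in-arc of a reticulation is not covered by any shape rooted at that reticulation, but it is covered from the tail's side, either as a cherry-shape arc or as the $p_yy$ arc of a reticulated shape.
\end{itemize}
One degenerate situation needs separate treatment: both out-arcs of a tree vertex are linking arcs. This cannot happen, because its base-tree degree would then be $1$ and it would have been suppressed.

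\emph{Cherry cover implies tree-based.} Suppose $N$ has a cherry cover $P$ but is not tree-based. By \Cref{lem:TB=NoW}, $N$ has a W-fence $(a_1,\dots,a_k)$ with $\tail(a_1)$ and $\tail(a_k)$ reticulations and $k$ even. Each reticulation has only one out-arc, and in any shape that out-arc must be the arc $p_xx$ of a reticulated cherry shape, since a reticulation cannot be the internal vertex of a cherry shape or the $p_y$ of a reticulated one. So $a_1$ lies in a reticulated shape whose middle arc enters $\tail(a_1)$, and that middle arc is not on the fence.

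Now I would propagate along the fence. Inside a zig-zag trail, consecutive arcs share a tail (a tree vertex) or a head (a reticulation). At a shared head, the two in-arcs of a reticulation split into one middle arc and one $p_yy$-type arc. At a shared tail, the two out-arcs of a tree vertex are covered either by one cherry shape or by the middle arc together with the side arc of one reticulated shape. This should force the arcs along the fence to alternate between ``paired with the previous arc'' and ``paired with the next arc.'' A parity count then yields a contradiction: $a_1$ is unpaired on the left, $k$ is even, and the requirement at $a_k$ cannot be met, since $a_k$ is also the out-arc of a reticulation and must likewise be paired outside the fence.

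I expect this alternation and parity argument to be the main obstacle, because the pairing rules at shared heads and shared tails have to be stated precisely. To keep it clean, I would phrase each cover as a perfect matching of the consecutive pairs $(a_i,a_{i+1})$ along the trail.
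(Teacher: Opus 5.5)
The paper gives no proof of this statement; it imports it from \cite{van2021unifying}. So I compare your proposal with the direct argument.

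Your forward direction is the natural construction from a base tree, and it is correct. The one check you skip is that $\{vx,uv,uy\}$ really has four distinct vertices. Here $x\neq y$ holds, because otherwise $x$ would have both in-arcs $ux$ and $vx$ in the base tree.

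Your converse is also correct in substance, but it takes a detour. The direct route deletes the middle arc of every reticulated cherry shape in the cover and checks that what remains is a base tree. It uses three local facts:
(i) an arc whose tail is the root or a reticulation is never a middle arc, since the tail of a middle arc has two distinct children;
(ii) every reticulation has exactly one incoming middle arc, since its unique out-arc is covered exactly once, and only as the $p_xx$ arc of a shape whose middle arc enters it;
(iii) every tree vertex has at most one outgoing middle arc, since the shape containing that arc also contains the other out-arc as its side arc.
Middle arcs only enter reticulations. So after the deletion every non-root vertex has indegree exactly $1$, and every non-leaf vertex keeps an out-arc. This gives a spanning tree with leaf set $X$.

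Your W-fence argument uses the same three facts. In addition, it relies on \Cref{lem:TB=NoW}, a substantial structural theorem, and it does not produce a base tree. What it gains is the observation that a single W-fence is already a local obstruction to any cherry cover, in the spirit of \Cref{lem:n_fence_unique}.

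When you write your version out, correct the rule at shared heads. The non-middle in-arc of a reticulation need not be a $p_yy$-type arc: it can be covered by a cherry shape, or, when its tail is a reticulation, as a $p_xx$ arc. The latter is exactly the situation of $a_1$. Only fact (ii) holds in general, and it is all you need.

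The alternation and parity step then reduces to a count, with no matching required:
\begin{itemize}
\item the $k/2$ reticulation heads of the fence need $k/2$ distinct incoming middle arcs among $a_1,\dots,a_k$;
\item by (i), these cannot include $a_1$ or $a_k$;
\item the arcs $a_2,\dots,a_{k-1}$ are the out-arc pairs of the $k/2-1$ tree-vertex tails, and by (iii) each pair contains at most one middle arc.
\end{itemize}
So at most $k/2-1$ middle arcs are available, a contradiction.

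Also drop the aside that the middle arc entering $\tail(a_1)$ lies off the fence. It is unused, and the in-arcs of $\tail(a_1)$ can themselves belong to the same fence.
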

Recall that for orchard networks, the cherry cover has to be acyclic (\Cref{thm:AcyclicCherryCover}). This is not the case for tree-based networks, which implies that orchard networks are also tree-based.
One can also show that tree-child networks can be reduced via cherry-picking, thus showing that they are orchards.
We conclude the section with the following lemma.

\begin{lemma}[\cite{janssen2021cherry} and Corollary 1 of 
\cite{van2022orchard}]\label{lem:NetworkClassContainment}
	If a network is tree-child, then it is orchard. If a network is orchard, 
	then it is tree-based.
\end{lemma}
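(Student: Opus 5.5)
The statement has two implications: tree-child implies orchard, and orchard implies tree-based. I would prove each with an earlier characterization: cherry-picking for the first, and cherry covers (\Cref{thm:AcyclicCherryCover} together with \Cref{thm:TB=ChCover}) for the second.

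\textbf{Orchard implies tree-based.} If $N$ is orchard, then by \Cref{thm:AcyclicCherryCover} it has an acyclic cherry cover. In particular it has a cherry cover, so by \Cref{thm:TB=ChCover} it is tree-based.

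\textbf{Tree-child implies orchard.} I would induct on the number of leaves plus the number of reticulations, using the recursive definition of orchard networks.

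The first step is to show that every tree-child network with at least two leaves contains a cherry or a reticulated cherry. Take an internal vertex $p$ of maximum distance from the root, all of whose children are therefore leaves or reticulations whose child is a leaf.
\begin{itemize}
\item If $p$ is a tree vertex with two leaf children, they form a cherry.
\item If $p$ is a tree vertex with one leaf child $y$ and one reticulation child $p_x$ whose child is a leaf $x$, then $(x,y)$ is a reticulated cherry.
\item Both children of a tree vertex cannot be reticulations, because tree-child forbids omnians.
\item If $p$ is a reticulation, its child is a leaf, so $p$ is not deepest in the relevant sense. I would refine the choice to be a deepest tree vertex, with the root arc handled separately for the single-leaf base case.
\end{itemize}
This gives the reduction step, and the choice of deepest tree vertex is where the case analysis needs care.

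The second step is to show that reducing the cherry or reticulated cherry keeps the network tree-child. Deleting a leaf $x$ of a cherry $(x,y)$ suppresses the parent $p$. The grandparent $g$ then gets child $y$, which is a leaf, so $g$ is not an omnian. Deleting the middle arc $p_yp_x$ of a reticulated cherry suppresses $p_y$ and $p_x$. The parent of $p_y$ then gains child $y$, a leaf. The other parent $q$ of $p_x$ gains child $x$, a leaf. Neither becomes an omnian, and no other vertex changes its children.

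One also has to check that no parallel arcs arise. If parallel arcs did arise, some vertex would have had two reticulation children sharing a parent, i.e.\ an omnian, which is excluded. This cleanup check is the main, though mild, obstacle.

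With both steps, each reduction strictly decreases the number of leaves or reticulations and preserves the tree-child property. A single-leaf tree-child network is just a root arc to a leaf, so induction yields a full cherry-picking sequence and $N$ is orchard.
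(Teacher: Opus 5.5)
Your proposal is correct and follows the route the paper itself indicates just before the lemma: an acyclic cherry cover is in particular a cherry cover, so \Cref{thm:AcyclicCherryCover} and \Cref{thm:TB=ChCover} give orchard $\Rightarrow$ tree-based, and tree-child networks can be fully reduced by cherry-picking, with the paper deferring the details to the cited references. Only small precisions are needed: ``deepest tree vertex'' must mean one with no tree vertex among its proper descendants (maximal \emph{longest}-path depth, since with shortest-path distance your claim about its children can fail); the same argument shows a one-leaf tree-child network has no tree vertex and hence no reticulation; and parallel arcs can never arise because the arcs created by suppression end at the leaves $x$ and $y$, whose only parents were the suppressed vertices.
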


\section{Proximity Measures}\label{sec:ProximityMeasures}
In this paper, we consider \leonew{three} proximity measures to determine how much a 
network deviates from being in a certain network class. Such measures are 
defined on the basis of vertex additions and arc deletions. In this section, $\cC$ 
will be used to denote a network class. In particular, we shall use~$\cT, \cTC, 
\cOr$, and~$\cTB$ to denote the classes of trees, tree-child networks, orchard 
networks, and tree-based networks, respectively.

\paragraph{Leaf Addition.}
Let~$N$ be a network on~$X$. The first measure we consider is based on \emph{adding a leaf.} \emph{Adding a leaf~$x\notin X$ to an arc~$e$ 
of~$N$} is the process of adding a labelled vertex~$x$, subdividing the arc~$e$ by a vertex~$w$ (i.e., if~$e=uv$ we delete the arc~$uv$, add the vertex~$w$, and add arcs~$uw$ and~$wv$), and adding an arc~$wx$. We denote the resulting network by~$N+(e,x)$. When the arc~$e$ in the above is irrelevant, we simply call this process \emph{adding a leaf~$x$ to~$N$}, and denote the resulting network by~$N+x$. 

\begin{definition}[\textit{Leaf addition measure}] \label{def:leaf_add}
    Let~$L_{\cC}(N)$ denote the minimum number of leaf additions required to make the network~$N$ a member of~$\cC$. Adding leaves does not reduce the number of reticulations in a network, so it only makes sense to consider~$L_\cC(N)$ for~$\cC\in\{\cTC,\cOr,\cTB\}$. We refer to this measure as \emph{leaf addition}.
\end{definition}
We consider the following question regarding leaf addition.
\medskip
\noindent
\begin{center}
\fbox{
\parbox{0.9\linewidth}{
{\sc \dpLDc{}}\\
{\bf Input:} A network $N$ on a set of taxa $X$.\\
{\bf Question:} What is the minimum number of leaves that need to be added for $N$ to belong to network class $\cC$? In other words, what is~$L_\cC(N)$?}
}
\end{center}
In what follows, for the classes of tree-child and tree-based, leaves will be added to remove so-called `forbidden structures'.
For orchard networks, leaves will be added to create reticulated cherries (sequentially).
For all three classes, one adds at most a leaf to each arc in the original network.
Intuitively, this is because each arc can be in at most one forbidden structure or one reticulated cherry.

\paragraph{Valid Arc Deletion.}
The second measure is based on \emph{deleting arcs}. As stated in \Cref{sec:preliminaries}, we shall write arc deletions to mean reticulation arc deletions. \ej{Contrary to leaf additions, arc deletions are done sequentially}. Observe that upon deleting an arc, we are required to clean up its endpoints in the resulting graph. We call a reticulation arc~$e$ \emph{valid} if deleting~$e$ and cleaning up results in a network that has exactly three fewer arcs than that of the original network~\cite{murakami2019reconstructing}. 
The second measure is defined on the basis of valid arcs. 

\begin{definition}[\textit{Valid arc deletion measure}] \label{def:val_arc_del}
    Let~$A^*_{\cC}(N)$ denote the minimum number of valid arc deletions required to make the network~$N$ a member of~$\cC$. We define~$A^*_{\cC}(N)\coloneqq \infty$ when~$N\notin \cC$ and~$N$ has no sequence of valid arc deletions to result in a network in~$\cC$, with~$\cC \in \{\cT, \cTC, \cOr, \cTB\}$. We refer to this proximity measure as \emph{valid arc deletion}.
\end{definition}
We consider the following question regarding valid arc deletion.
\medskip
\noindent
\begin{center}
\fbox{\parbox{0.9\linewidth}{
{\sc \dpAsDc{}}\\
{\bf Input:} A network $N$ on $X$.\\
{\bf Question:} What is the minimum number of valid arcs that need to be deleted sequentially for $N$ to belong to network class $\cC$? In other words, what is~$A^*_\cC(N)$?}}
\end{center}
\medskip

To give intuition, we characterize invalid arcs.

\begin{lemma}\label{lem:ValidEdgeCharacterization}
	Let~$N$ be a network. A reticulation arc~$ur$ in~$N$ is invalid if and 
	only if one of the following holds.
	\begin{enumerate}
		\item $u$ is a reticulation; or
		\item $u$ is a tree vertex with parent~$p$ and children~$r,s$, where 
		$pu,us,ps$ are arcs in $N$; or
		\item $r$ has parents~$u,v$ and a child~$c$, and $vr,rc,vc$ are arcs 
		in~$N$; or
        \item $u$ is a tree vertex with parent~$p$ and children~$r,s$, where~$pu,us,pr,rs$ are arcs in~$N$.
	\end{enumerate}
\end{lemma}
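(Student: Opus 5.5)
We trace exactly what cleaning up does after deleting $ur$ and count arcs. Deleting $ur$ removes one arc. Then $u$ loses an out-arc and $r$ loses an in-arc. In the generic situation, $u$ (if a tree vertex) becomes indegree-1 outdegree-1 and is suppressed, which removes one more arc. Likewise $r$ becomes indegree-1 outdegree-1 and is suppressed, removing one more. That is exactly three fewer arcs. So the arc $ur$ is valid if and only if cleaning up proceeds in precisely this way: exactly one suppression at $u$, exactly one at $r$, and nothing else (no deletion of unlabelled outdegree-0 vertices, no merging of parallel arcs, no further suppressions). The plan is to show that each deviation from this generic behaviour is equivalent to one of conditions 1--4.

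\textbf{Sufficiency (each condition gives an invalid arc).}
\begin{enumerate}
\item If $u$ is a reticulation, then after deletion $u$ has outdegree 0 and is unlabelled. It is deleted, which also removes both of its in-arcs. The parents of $u$ then lose children and cascade further. In any case the count is not exactly three: we check that at least four arcs disappear.
\item Here suppressing $u$ creates the arc $ps$, which is parallel to the existing $ps$. Merging them removes an extra arc. Then $p$ has outdegree 1 and is suppressed, removing yet another. So more than three arcs vanish.
\item This is the symmetric case at the head: suppressing $r$ creates $vc$, parallel to the existing $vc$. The resulting extra merge and suppressions change the count.
\item Here $pr$, $rs$, $pu$, $us$ are arcs. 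Suppressing $u$ yields $ps$, and suppressing $r$ (whose remaining parent is $p$) also yields $ps$. These two arcs are parallel, so a further merge and a suppression of $p$ occur.
\end{enumerate}
In each case, we compute the arc count explicitly and find a loss of at least four.

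\textbf{Necessity (invalid implies one of the conditions).} Assume none of 1--4 holds. Then $u$ is a tree vertex with parent $p$ and other child $s$, and $r$ has other parent $v$ and child $c$. Note $p\neq r$ and $s\neq r$, by acyclicity and because there are no parallel arcs in $N$. After deleting $ur$, the vertices $u$ and $r$ are suppressed, giving new arcs $ps$ and $vc$. The only way further cleaning can be triggered is:
\begin{itemize}
\item an arc $ps$ already existed, which is condition 2;
\item an arc $vc$ already existed, which is condition 3;
\item the two new arcs coincide, i.e.\ $p=v$ and $s=c$, which is condition 4;
\item $u$ coincides with $v$, or some such identification.
\end{itemize}
The last possibility needs care. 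For example, $v=u$ is impossible, since that would make $ur$ a parallel arc; and $s=r$ is impossible, since $s$ is the other child of $u$. We also need $s\neq c$ or $p\neq v$ whenever the merges would otherwise coincide. Apart from parallel arcs, no unlabelled outdegree-0 vertex or new degree-2 vertex can arise. Hence exactly three arcs are lost, and the arc is valid.

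The main obstacle is making this enumeration of coincidences exhaustive. In particular we must handle degenerate identifications among $p, s, v, c$: for instance $p=v$ with $s\neq c$, or $s=v$, or $c$ being adjacent to $p$. We must check that each of these either creates no parallel arc or falls exactly under condition 2, 3 or 4. To do this I would write the four new-or-existing arcs $ps$ and $vc$ and list every equality among $\{p,s,v,c\}$ compatible with acyclicity and the degree constraints.
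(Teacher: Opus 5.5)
Your proposal is correct and takes essentially the same route as the paper. For sufficiency you exhibit at least one cleanup step beyond the two forced suppressions; for necessity you observe that once $u$ is a tree vertex, the only possible extra cleanup is a parallel arc created by suppressing $u$, $r$, or both, giving conditions 2, 3 and 4. Your extra care about coincidences among $p,s,v,c$, which the paper skips, closes at once: the new arcs $ps$ and $vc$ cannot equal any removed arc $pu,us,ur,vr,rc$, so they become parallel only if one is already an arc of $N$ or $(p,s)=(v,c)$.
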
 

\begin{proof}
	If one of the four cases mentioned above is present in~$N$, then it is clear that upon deleting the arc~$ur$ and cleaning up, the resulting network contains at least four fewer arcs than~$N$.
    Then the arc~$ur$ must be invalid.

    So now suppose that~$ur$ is an invalid reticulation arc. We split into two cases depending on whether~$u$ is a reticulation or a tree vertex.
    If~$u$ is a reticulation, no other conditions are necessary to ensure that~$ur$ is invalid. Indeed, upon deleting~$ur$,~$u$ becomes an unlabelled vertex of indegree-2 and outdegree-0. 
    Upon cleaning up, we observe that~$u$ is deleted, along with its remaining incident arcs.
    All the while,~$r$ becomes a vertex of indegree-1 and outdegree-1. It therefore becomes suppressed in the cleaning up process. 
    In the resulting network, all arcs incident to~$u$ are removed, and in addition, an arc incident to~$r$ (but not to~$u$) is removed. 
    This means the resulting network contains at least four arcs fewer than that of the original network~$N$. This is case 1 in our lemma.

    On the other hand if~$u$ is a tree vertex, then the invalidity of~$ur$ implies that suppressing~$u$, ~$r$, or both~$u$ and~$r$ in the cleaning up process creates parallel arcs.
    First suppose that suppressing~$u$ creates parallel arcs. 
    Then we must have that~$u$ and the parent~$p$ of~$u$ share a common child~$s$, which is case 2 in our claim.
    Next suppose that suppressing~$r$ creates parallel arcs.
    Then, similar to the previous case, the parent~$v$ of~$r$ that is not~$u$ share a common child~$c$ with~$r$, which is case 3 in our claim.
    Finally, if suppressing both~$u$ and~$r$ creates parallel arcs, then~$u,r$ must have a common parent~$p$ and a common child~$s$, which is case 4 in our claim. 
\end{proof}

See \Cref{fig:example_Lor_Aorinf} (a) for an example where $A^*_{\cC}(N) = \infty$. None of the reticulated arcs $e_1, \ldots, e_4$ are valid: $e_2$ is incident to two reticulations and deleting any of the other arcs creates parallel arcs.

\paragraph{(Not necessarily valid) Arc Deletion.}
\ym{The third and final measure is based on \emph{deleting (not necessarily valid) arcs}.
As for the valid arc deletions, the arc deletions considered here are done sequentially. 
The third measure is defined on the basis of removing reticulation arcs.}

\begin{definition}[\textit{Arc deletion measure}] \label{def:arc_del}
    Let~$A_{\cC}(N)$ denote the minimum number of arc deletions required to make the network~$N$ a member of~$\cC$. We define~$A_{\cC}(N)\coloneqq \infty$ when~$N\notin \cC$ and~$N$ has no sequence of arc deletions to result in a network in~$\cC$, with~$\cC \in \{\cT, \cTC, \cOr, \cTB\}$. We refer to this proximity measure as \emph{arc deletion}.
\end{definition}
We consider the following question regarding arc deletion.
\medskip
\noindent
\begin{center}
\fbox{\parbox{0.9\linewidth}{
{\sc \dpADc{}}\\
{\bf Input:} A network $N$ on $X$.\\
{\bf Question:} What is the minimum number of arcs that need to be deleted sequentially for $N$ to belong to network class $\cC$? In other words, what is~$A_\cC(N)$?}}
\end{center}
\medskip

\ym{In what follows, we discuss the comparability of the three proximity measures in \Cref{sec:comp}.
Then, we study the leaf addition and valid arc deletion measures in detail in \Cref{sec:LeafProximity,sec:A*Deletion,sec:Hardness}, proving complexity results and bounds.}


\section{Comparability of Proximity Measures}\label{sec:comp}
In this section, we provide some results on the comparability of the three proximity measures. 
\ym{For the first one, we recall that trees are tree-child, tree-child networks are orchard, and orchard networks are tree-based (\Cref{lem:NetworkClassContainment}).
This gives rise to the following three observations.}

\begin{observation}\label{obs:LeafAdditionComparisonClasses}
    Let~$N$ be a network. Then,~$L_{\cTC}(N) \geq L_{\cOr}(N) \geq L_{\cTB}(N)$.
\end{observation}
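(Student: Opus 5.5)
The plan is to derive both inequalities directly from the class containments in \Cref{lem:NetworkClassContainment}, that is, $\cTC \subseteq \cOr \subseteq \cTB$, together with the fact that $L_\cC(N)$ is a minimum over a set of feasible leaf-addition sequences. The single general principle I will use is monotonicity: if $\cC_1 \subseteq \cC_2$, then $L_{\cC_1}(N) \geq L_{\cC_2}(N)$ for every network~$N$.

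To prove monotonicity, I first check that $L_{\cC_1}(N)$ is finite. If it is not, the inequality holds trivially with the convention $L=\infty$. The introduction notes that any network can be made orchard, and hence tree-based, by adding leaves; for tree-child the same is true, since one can add a leaf to an outgoing arc of each omnian. In any case, finiteness is not needed for the argument.

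Now let $k = L_{\cC_1}(N)$, and fix a sequence of $k$ leaf additions that turns $N$ into a network $N' \in \cC_1$. Because $\cC_1 \subseteq \cC_2$, we also have $N' \in \cC_2$. So this same sequence of $k$ leaf additions witnesses membership in $\cC_2$, and therefore $L_{\cC_2}(N) \leq k$.

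Applying this principle with $(\cC_1,\cC_2)=(\cTC,\cOr)$ gives $L_{\cTC}(N) \geq L_{\cOr}(N)$, and applying it with $(\cOr,\cTB)$ gives $L_{\cOr}(N) \geq L_{\cTB}(N)$. Chaining the two yields the observation.

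I do not expect any real obstacle. The only point needing care is that the definition of $L_\cC$ places no restriction on the leaf additions other than the final network lying in $\cC$. This is why a witness for the smaller class automatically serves as a witness for the larger one.
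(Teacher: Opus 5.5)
Your proposal is correct and follows the same approach as the paper. The paper derives the observation directly from the containments $\cTC \subseteq \cOr \subseteq \cTB$ of \Cref{lem:NetworkClassContainment}, and your monotonicity argument (a leaf-addition witness for the smaller class is also one for the larger class) spells out that one-line justification.
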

\begin{observation}\label{obs:A*ClassIneq}
        Let $N$ be a network. Then, $A^*_{\cT}(N) \geq A^*_{\cTC}(N) \geq A^*_{\cOr}(N) \geq A^*_{\cTB}(N)$.
\end{observation}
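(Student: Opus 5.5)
The plan is to derive \Cref{obs:A*ClassIneq} directly from the class containments $\cT \subseteq \cTC \subseteq \cOr \subseteq \cTB$. Trees are trivially tree-child: a tree has no reticulations, so every non-leaf vertex has a child that is a tree vertex or a leaf. The other two containments are \Cref{lem:NetworkClassContainment}.

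The key step is a monotonicity principle. If $\cC \subseteq \cC'$, then $A^*_{\cC}(N) \ge A^*_{\cC'}(N)$ for every network $N$. There are two cases.

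\begin{itemize}
\item If $A^*_{\cC}(N) = \infty$, the inequality holds trivially.
\item Otherwise there is a sequence of $k = A^*_{\cC}(N)$ valid arc deletions transforming $N$ into some network $N' \in \cC$. Since $N' \in \cC'$ as well, the same sequence witnesses $A^*_{\cC'}(N) \le k$. This also covers $k = 0$, where $N$ itself already lies in $\cC \subseteq \cC'$.
\end{itemize}

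The point to note is that validity of each deletion depends only on the current network, not on the target class. A sequence of valid deletions that is admissible when aiming for $\cC$ is therefore equally admissible when aiming for $\cC'$.

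Applying the principle to the three consecutive containments gives the chain $A^*_{\cT}(N) \ge A^*_{\cTC}(N) \ge A^*_{\cOr}(N) \ge A^*_{\cTB}(N)$, read in the extended reals where $\infty \ge \infty$.

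There is no real obstacle. The only care needed is handling the infinite values consistently and noting that $\cT \subseteq \cTC$, which is not stated explicitly in \Cref{lem:NetworkClassContainment}, holds by definition.
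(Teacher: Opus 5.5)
Your proposal is correct and follows the paper's own reasoning: the paper derives this observation directly from the containments $\cT \subseteq \cTC \subseteq \cOr \subseteq \cTB$ (trees are tree-child, plus \Cref{lem:NetworkClassContainment}). Your monotonicity argument, including the treatment of infinite values and the remark that validity does not depend on the target class, just spells that out.
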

\begin{observation}
    Let~$N$ be a network. Then, $A_{\cT}(N) \geq A_{\cTC}(N) \geq A_{\cOr}(N) \geq A_{\cTB}(N)$.
\end{observation}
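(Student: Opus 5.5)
The statement is a monotonicity property of $A_\cC$ under inclusion of classes, so I will deduce it from the chain $\cT \subseteq \cTC \subseteq \cOr \subseteq \cTB$ and the definition of $A_\cC$ as a minimum over finite sequences of arc deletions.

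\emph{Step 1: the chain of classes.} Trees have no reticulations, so every non-leaf vertex has only tree vertices or leaves as children; hence trees are tree-child. \Cref{lem:NetworkClassContainment} then gives the remaining inclusions.

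\emph{Step 2: a general monotonicity fact.} I will show that $\cC_1 \subseteq \cC_2$ implies $A_{\cC_1}(N) \ge A_{\cC_2}(N)$, with the convention that $\infty \ge$ anything.

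1. If $A_{\cC_1}(N) = \infty$, there is nothing to prove.
2. If $N \in \cC_1$, then $A_{\cC_1}(N) = 0$. Also $N \in \cC_2$, so $A_{\cC_2}(N) = 0$.
3. Otherwise, take a sequence of $k = A_{\cC_1}(N)$ arc deletions turning $N$ into some $N' \in \cC_1$. Then $N' \in \cC_2$, so the same sequence witnesses $A_{\cC_2}(N) \le k$.

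The key point in case 3 is that this sequence is a legitimate sequence of arc deletions regardless of the target class. Whether an arc may be deleted depends only on the current network, not on $\cC$.

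\emph{Step 3: conclude.} Applying Step 2 to each consecutive pair in the chain gives the three inequalities.

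There is no real obstacle. The only care needed is the $\infty$ convention and the case $N \in \cC$, where the measure is $0$ rather than defined through a nonempty sequence. Both are handled in Step 2.
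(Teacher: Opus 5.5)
Your proof is correct and takes the same route as the paper. The paper derives this observation directly from the containments $\cT\subseteq\cTC\subseteq\cOr\subseteq\cTB$ (\Cref{lem:NetworkClassContainment}); your Step 2 simply spells out why a minimum over class-independent deletion sequences is monotone under class inclusion, including the $\infty$ and $N\in\cC$ cases.
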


\ym{We say that two measures~$M_1,M_2$ are \emph{comparable} if for any network~$N$, we have~$M_1(N) \le M_2(N)$.
We say that two measures~$M_1,M_2$ are \emph{incomparable} if there exist two networks~$N_1,N_2$, where~$M_1(N_1)>M_2(N_1)$ and~$M_1(N_2)<M_2(N_2)$.}

\subsection{\texorpdfstring{Comparing $A^*$ and~$A$}{Comparing A* and A}}\label{subsec:CompA^*A}
Since every valid arc deletion is an arc deletion, by definition,~$A$ and~$A^*$ must be comparable.
\begin{observation}\label{obs:AleA^*}
    Let~$\cC$ be a network class, and let~$N$ be a network. 
    Then~$A_\cC(N)\le A^*_\cC(N)$.
\end{observation}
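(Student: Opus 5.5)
The plan is to show that every sequence of valid arc deletions witnessing $A^*_\cC(N)$ is also an admissible sequence for $A_\cC(N)$, with the same length and the same final network. Then the minimum defining $A_\cC(N)$ is taken over a superset of the sequences available to $A^*_\cC(N)$, and the inequality follows.

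The argument splits into two cases according to whether $A^*_\cC(N)$ is finite.

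\emph{Infinite case.} If $A^*_\cC(N)=\infty$, there is nothing to prove, since every value is at most $\infty$.

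\emph{Finite case.} Suppose $A^*_\cC(N)=k<\infty$. Then there is a sequence $N=N_0,N_1,\ldots,N_k$ of networks in which:
\begin{itemize}
\item each $N_{i}$ is obtained from $N_{i-1}$ by deleting a valid reticulation arc $e_i$ of $N_{i-1}$ and cleaning up;
\item $N_k\in\cC$.
\end{itemize}

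I then check that each step is also an arc deletion in the sense of Definition~\ref{def:arc_del}. This is immediate from the definitions. A valid arc is by definition a reticulation arc whose deletion followed by clean-up yields a network with exactly three fewer arcs. An arc deletion is the deletion of an arbitrary reticulation arc followed by the same clean-up process. So the operation producing $N_i$ from $N_{i-1}$ is literally an arc deletion, and it produces the same resulting network. Hence $(e_1,\ldots,e_k)$ is a sequence of $k$ arc deletions transforming $N$ into $N_k\in\cC$, which gives $A_\cC(N)\le k$.

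The only point needing care is that the clean-up procedure is the same in both definitions, so that the intermediate networks coincide. This ensures that validity is checked in the same networks in which the arc deletions are performed. I do not expect any real obstacle: the statement is essentially definitional.
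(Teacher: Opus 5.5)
Your proposal is correct and matches the paper's one-line justification: a valid arc deletion is a particular arc deletion with the same clean-up, so any sequence of valid arc deletions witnessing $A^*_\cC(N)$ is also an admissible sequence for $A_\cC(N)$. Your separate handling of the case $A^*_\cC(N)=\infty$ just makes explicit what the paper leaves implicit.
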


\subsection{\texorpdfstring{Comparing~$L$ and~$A^*$}{Comparing L and A*}}\label{subsec:CompLA^*}

In this subsection, we show that the measures are incomparable and comparable for $\cOr$ and~$\cTB$, respectively.

\begin{figure}
    \centering
    \subfloat[]{\includegraphics{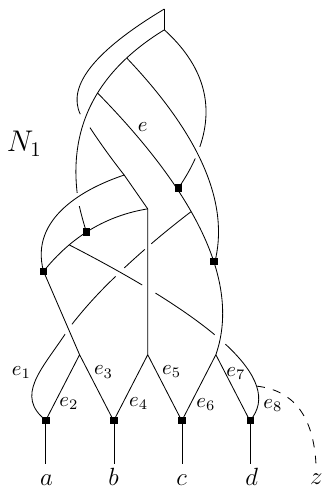}}
    \hspace{5mm}\label{subfig:L_orA_orN_1}
    \subfloat[]{\includegraphics{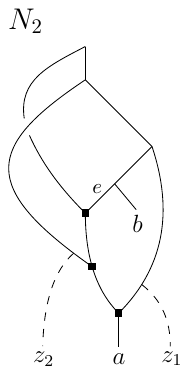}}\label{subfig:L_orA_orN_2}
    \caption{Two networks~$N_1$ and~$N_2$ on leaf sets~$\{a,b,c,d\}$ and~$\{a,b\}$, respectively, where $L_{\cOr}(N_1) <
    A^*_{\cOr}(N_1)$ and $L_{\cOr}(N_2) > A^*_{\cOr}(N_2)$. 
    We have~$L_{\cOr}(N_1)=1, L_{\cOr}(N_2)=2, A^*_{\cOr}(N_1) = A_{\cOr}(N_1) = 2$, and~$A^*_{\cOr}(N_2) = A_{\cOr}(N_2) = 1$. 
    See \Cref{thm:IncompLorA*or,thm:IncompLorAor} for the proofs thereof, which uses the leaves~$z,z_1$, and~$z_2$ to show the leaf addition measures of each network.
    }
    \label{fig:Lor_Aor_star_incomp}
\end{figure}

\begin{theorem}\label{thm:IncompLorA*or}
    The measures $L_{\cOr}$ and $A^*_{\cOr}$ are incomparable.
\end{theorem}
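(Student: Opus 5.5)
To prove incomparability, we exhibit the two networks $N_1$ and $N_2$ of \Cref{fig:Lor_Aor_star_incomp}. We show that $L_{\cOr}(N_1)=1<2=A^*_{\cOr}(N_1)$ and $L_{\cOr}(N_2)=2>1=A^*_{\cOr}(N_2)$. Each of the four values is established by a matching upper bound and lower bound.

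\textbf{Upper bounds.} For $L_{\cOr}(N_1)\le 1$, we add the leaf $z$ at the indicated arc. We then exhibit an explicit cherry-picking sequence reducing $N_1+z$ to a single leaf; alternatively, we give an acyclic cherry cover (\Cref{thm:AcyclicCherryCover}) or an HGT-consistent labelling (\Cref{thm:OrchIFFHori}). Likewise, for $L_{\cOr}(N_2)\le 2$ we add $z_1,z_2$ and reduce the result. For $A^*_{\cOr}(N_2)\le 1$, we name a reticulation arc, check via \Cref{lem:ValidEdgeCharacterization} that none of the four invalidity configurations occurs, and verify that the cleaned-up network is orchard. For $A^*_{\cOr}(N_1)\le 2$, we give two valid deletions in sequence, checking validity of the second arc in the intermediate network.

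\textbf{Lower bounds.} $N_1$ and $N_2$ are not orchard. To see this, we apply the order-independence of cherry picking: we greedily reduce all available cherries and reticulated cherries and reach a network with no (reticulated) cherry but more than one leaf, in the spirit of \Cref{rem:NoAddedLeaf}. This gives $L_{\cOr}(N_1)\ge1$ and $A^*_{\cOr}(N_2)\ge1$. For $A^*_{\cOr}(N_1)\ge 2$, we use \Cref{lem:ValidEdgeCharacterization} to enumerate every valid reticulation arc of $N_1$, which is a short list since $N_1$ is small. For each resulting network, we show it is still not orchard by the same greedy reduction argument.

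\textbf{Main obstacle.} The hardest step is $L_{\cOr}(N_2)\ge 2$, i.e.\ showing that no single leaf addition makes $N_2$ orchard. The number of arcs is finite, so a case analysis over the arc receiving the new leaf $z$ suffices. To cut down cases, I would first use symmetries of $N_2$. Then, for a leaf added to an arc $e$, I would cherry-pick greedily in $N_2+(e,z)$. The new leaf can only help reduce (reticulated) cherries locally near $e$, so after exhausting reductions one should reach a stuck configuration that the single leaf cannot resolve. Each case would be closed by exhibiting this stuck network. Order-independence of cherry picking ensures that a stuck state found greedily certifies that the network is not orchard.
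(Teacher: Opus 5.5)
Your proposal is correct and follows essentially the same route as the paper. Both use the two networks $N_1,N_2$ of the figure, explicit cherry-picking sequences for the upper bounds, and finite case checks for the lower bounds, certified by order-independence of cherry picking. The paper only prunes these case analyses: since neither network has a (reticulated) cherry, the only useful modifications are those that create one. For $N_1$ these are the deletions of $e,e_1,\ldots,e_8$; for $N_2$ they are leaves on the two incoming arcs of the parent of $a$, which is what your symmetry-reduced enumeration would also find.
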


\begin{proof}
We prove the theorem by using the two networks~$N_1$ and~$N_2$ in \Cref{fig:Lor_Aor_star_incomp}. 
We show that~$L_{\cOr}(N_1) = 1$ and $A^*_{\cOr}(N_1) = 2$, and we show that~$L_{\cOr}(N_2) = 2$ and $A^*_{\cOr}(N_2) = 1$, thereby proving the claim. 

\paragraph{$\bm{L_{\cOr}(N_1) = 1}$:} First, observe that $N_1$ does not have any (reticulated) cherries, and hence at least one leaf should be added. 
Assume that we add a new leaf $z$ to arc $e_8$ as shown in the figure. 
Then, we can reduce the resulting network using the following cherry-picking sequence. $(d, z) (c, d) (b, c) (a, b) (b, z) (d, a) (z, c) (a, z) (z, d) (a, d) (b, c) (c, d)$. We conclude that $L_{\cOr}(N_1) = 1$.

\paragraph{$\bm{A^*_{\cOr}(N_1) = 2}$:} Similarly, as argued before, there are no (reticulated) cherries that can be picked, and so $A^*_{\cOr}(N_1) \geq 1$. 
To create at least one (reticulated) cherry, we must delete at least one of $e,e_1, \ldots, e_8$. 
We will check each case, and verify -- since the order of cherry reductions does not matter (\cite{janssen2021cherry}) -- that deleting just one of these arcs will not yield an orchard network. 
\begin{itemize}
    \item Delete $e$: There are no (reticulated) cherries.
    \item Delete $e_1$: We pick the elements $(b, a) (c, b) (d, c) (a, d) (d, b) (a, b)$. 
    In the resulting network, each leaf pair has a distance (when considering the underlying undirected graph) of at least 4 and can hence not be further reduced.
    \item Delete $e_2$: There are no (reticulated) cherries.
    \item Delete $e_3$: We pick the elements $(c, b) (d, c)$. There are no (reticulated) cherries left.
    \item Delete $e_4$: We pick the element $(a, b)$. There are no (reticulated) cherries left.
    \item Delete $e_5$: We pick the element $(d,c)$. There are no (reticulated) cherries left.
    \item Delete $e_6$: We pick the elements $(b, c)(a, b)$. There are no (reticulated) cherries left.
    \item Delete $e_7$: There are no (reticulated) cherries.
    \item Delete $e_8$: We pick the elements $(c, d) (b, c) (a, b) (d, a)$. There are no (reticulated) cherries left.
\end{itemize}
\ej{Now, suppose we delete $e$ and $e_1$. Then, reduce the resulting network by picking the elements $(b, a) (c, b) (d, c) (a, d) (d, b) (c, d) (a, b) (c, d) (b, d)$. This shows we can make $N_1$ orchard with two arc deletions and since $A^*_{\cOr}(N_1) > 1$, we conclude that $A^*_{\cOr}(N_1) = 2$.}

\paragraph{$\bm{L_{\cOr}(N_2) = 2}$:} The network $N_2$ on $\{a,b\}$ is not orchard; there is no (reticulated) cherry that can be picked, so~$L_{\cOr}(N_2) >0$.
We now show that $L_{\cOr}(N_2) > 1$.
Clearly, we must add leaves to create reticulated cherries.
No leaf addition in the neighbourhood of~$b$ will yield a reticulated cherry as~$b$'s parent is a tree vertex.
So one must add a leaf either to one of the two incoming arcs of the parent of~$a$.
If we add a leaf $l_1$ to the incoming arc of the parent of $a$ (in the figure, the left incoming arc), we can pick $(a, l_1)$, after which there is no (reticulated) cherry to pick. 
On the other hand, suppose we add a leaf $l_2$ (i.e., leaf $z_1$ in the figure) to the other incoming arc of the parent of $a$. 
After picking the element~$(a, l_2)$, there is no other (reticulated) cherry to pick. 
This means $L_{\cOr}(N_2) > 1$. 
Consider the network obtained by adding the two leaves $z_1$ and $z_2$ as indicated in \Cref{fig:Lor_Aor_star_incomp}. 
The resulting network can be reduced using the sequence $(a, z_1) (a, z_2) (a, b) (b, z_1), (z_1, z_2) (z_2, a)$.

\paragraph{$\bm{A^*_{\cOr}(N_2) = 1}$:} The network is not orchard, and hence at least one valid arc should be deleted. Consider the network obtained by removing~$e$. 
We can reduce the resulting network using the sequence $(a, b) (a, b) (a, b)$.\qedhere
\end{proof}



\begin{figure}
    \centering
    \subfloat[]{\includegraphics{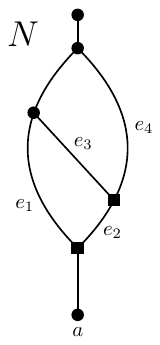}}
    \hspace{4mm}
    \subfloat[]{\includegraphics{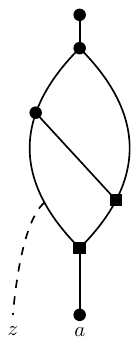}}
    \caption{
    Example of a network~$N$ with~$L_{\cOr}(N)<A^*_{\cOr}(N)$.
    (a) A non-orchard network $N$ on~$\{a\}$. 
    $N$ does not have any valid arcs, thus $A^*_{\cOr} = \infty$.
    (b) The network~$N$ with an additional leaf~$\{z\}$. The network can be reduced using the sequence~$(a,z)(a,z)(a,z)$, showing that $L_{\cOr}(N) = 1$.}
    \label{fig:example_Lor_Aorinf}
\end{figure}

When we look at the class of tree-based networks, we show that the leaf addition proximity measure is always at most the valid arc deletion measure.

\begin{figure}
    \centering
    \subfloat[]{\includegraphics{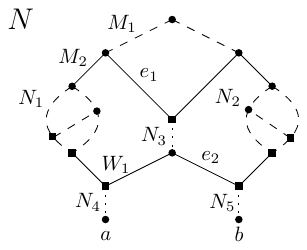}}
    \hspace{2mm}
    \subfloat[]{\includegraphics{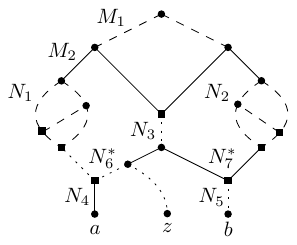}}
    \hspace{2mm}
    \subfloat[]{\includegraphics{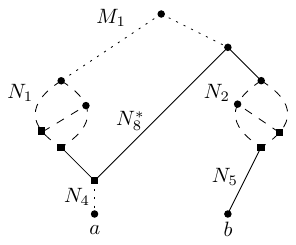}}
    \caption{
    Example of a network~$N$ with $1 = L_{\cTB}(N) < A^*_{\cTB}(N) = 2$.
    (a) A non-tree-based network~$N$ on~$\{a,b\}$, together with its zig-zag decomposition. 
    (b) $N$ with an additional leaf~$z$. The resulting network is tree-based as there are no more W-fences. This implies~$L_{\cTB}(N)=1$.
    (c) $N$ with valid arcs~$e_1,e_2$ removed. The resulting network is tree-based, and as one valid arc deletion does not suffice, $A^*_{\cTB}(N)=2$.}
    \label{fig:example_Ltb_l_Astb}
\end{figure}

\begin{theorem}\label{thm:ComparableLTBA*TB}
    The measures~$L_\cTB$ and~$A^*_\cTB$ are comparable.
    In particular, for all networks~$N$, $L_{\cTB}(N) \leq A^*_{\cTB}(N)$. 
\end{theorem}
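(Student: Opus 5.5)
Assume $A^*_{\cTB}(N)=k<\infty$; otherwise there is nothing to prove. Fix a sequence of valid arc deletions $e_1,\dots,e_k$ that produces a tree-based network $N_k$. From this sequence I will build a set of $k$ leaf additions that makes $N$ tree-based. The argument is an induction on $k$.

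The key step is a one-step lemma: if $e=ur$ is a valid arc of $N$ and $N-e$ (after clean-up) is tree-based, then adding a leaf $z$ to $e$ gives a tree-based network. I would prove it with cherry covers (\Cref{thm:TB=ChCover}). Since $e$ is valid, deleting it suppresses exactly $u$ and $r$. Say $u$ has parent $p$ and other child $s$, and $r$ has other parent $v$ and child $c$. Then $N-e$ contains the arcs $ps$ and $vc$ in place of the paths $p\,u\,s$ and $v\,r\,c$. Take a cherry cover $P'$ of $N-e$. In $N+(e,z)$, subdivide $e$ into $uw$ and $wr$ with $wz$ the new leaf arc. Lift $P'$ as follows. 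The shape of $P'$ covering $ps$ now covers $pu$ instead; this is legitimate because $u$ occupies the same role at the endpoint. The arc $us$ is added to a new cherry shape $\{us,uw\}$ with internal vertex $u$. Similarly, the shape covering $vc$ becomes one covering $vr$. The arcs $wr$ and $rc$, together with $wz$, form a reticulated cherry shape $\{rc, wr, wz\}$ with $r$ a reticulation. Every arc of $N+(e,z)$ is then covered exactly once, except that the new shapes need distinct vertices. Validity of $e$ is exactly what guarantees this ($s\neq w$, $r$ not a child of $p$ in a degenerate way, and so on, by \Cref{lem:ValidEdgeCharacterization}). So $N+(e,z)$ is tree-based. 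One subtlety needs checking: if the shape covering $ps$ had $p$ as an internal vertex with $s$ as an endpoint, replacing $s$ by $u$ is fine. If $ps$ was instead a middle arc of a reticulated cherry shape (with $s$ a reticulation), then after replacement $u$ would have to be a reticulation. In that case I would instead keep $us$ in that shape and use a cherry shape $\{pu, \ldots\}$ rooted at $p$. This requires case analysis, and I expect it to be the main obstacle; alternatively it can be handled with the zig-zag characterization (\Cref{lem:TB=NoW}), by checking that the maximal trails through $ps$ and $vc$ split into fences that are not W-fences once $z$ is attached.

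For the induction, I need to apply the lemma repeatedly. Leaf additions and deletions of other arcs interact, so I would strengthen the statement. Claim: if $N'$ is obtained from $N$ by deleting a valid arc $e$, and $L$ is a set of leaf additions on arcs of $N'$ making it tree-based, then $L$ pulled back to arcs of $N$, plus one leaf on $e$, makes $N$ tree-based. Arcs of $N'$ correspond to arcs or two-arc paths of $N$, so pulling back is well defined. Applying the one-step lemma to the network $N$ with the leaves of $L$ added, where $e$ is still valid since added leaves only subdivide arcs, yields the claim. Induction on $k$ then gives $L_{\cTB}(N)\le k = A^*_{\cTB}(N)$.
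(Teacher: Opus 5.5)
\textbf{Gap: your one-step lemma is false.} A leaf on the deleted arc $e=ur$ itself does not always make $N$ tree-based. The failure is on the $v$--$c$ side, not the $p$--$s$ side you flagged.

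The maximal zig-zag trail of $N$ that starts with $rc$ has a reticulation tail at that end. Subdividing $e$ does not touch it. Deleting $e$ can destroy a W-fence of this form, because suppressing $r$ merges it with the trail through $vr$. A leaf on $e$ cannot destroy it.

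In your cherry-cover lift, this is the case where $c$ is a reticulation and $vc$ is the middle arc of a shape $\{cc',vc,vy\}$ of $P'$. Here $c'$ is the child of $c$ and $y$ the other child of $v$. Turning this into a shape with middle arc $vr$ forces it to contain $rc$. But you have also put $rc$ into $\{rc,wr,wz\}$, and $cc'$ is left uncovered.

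\textbf{Counterexample.} Take the network with arcs $\rho a$, $ab$, $ad$, $bu$, $bv$, $ur$, $u\ell_1$, $vr$, $v\ell_2$, $du'$, $dv'$, $u'x$, $u'\ell_3$, $v'x$, $v'\ell_4$, $rc$, $xc$, $c\ell_5$.
\begin{itemize}
\item Its only W-fence is $(rc,xc)$.
\item The arc $ur$ is valid.
\item $N-ur$ decomposes into M-fences, the N-fence $(v\ell_2,vc,xc)$ and the N-fence $(c\ell_5)$. So it is tree-based by \Cref{lem:TB=NoW}, and $A^*_{\cTB}(N)=1$.
\item Yet $N+(ur,z)$ still contains the W-fence $(rc,xc)$.
\item The same happens for the other valid arcs $vr$, $u'x$ and $v'x$.
\end{itemize}
So your strengthened induction claim already fails with $L=\emptyset$.

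\textbf{What is missing: a case distinction on where the leaf goes.} The paper fixes a base tree $T$ of $N-e$. It then asks whether the arcs $ps$ and $vc$ created by suppression are tree arcs or linking arcs of $T$.
\begin{itemize}
\item If both are tree arcs, $N$ was already tree-based.
\item If $vc$ is a linking arc, the leaf goes on $rc$. This is the situation in the example, since $xc$ must be a tree arc there. Adding a leaf to $rc$ splits $(rc,xc)$ into two N-fences.
\item If $ps$ is linking and $vc$ is a tree arc, the leaf goes on $us$. In this case a leaf on $e$ also works.
\end{itemize}
In each case $T$ is extended by an explicit path ending at the new leaf.

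With this corrected one-step lemma, your induction goes through: the extra leaf goes on $e$, $us$ or $rc$ of $N$ (with $L$ pulled back), according to the case. It is then essentially the paper's induction, just peeling off the first deletion instead of the last.
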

\begin{proof}
    Let $A^*_{sol}$ be the sequence of optimal arc deletions to make $N$ tree-based.
    We prove by induction on~$|A^*_{sol}| = A^*_{\cTB}(N)$.
    For the base case, suppose~$A^*_{\cTB}(N)=1$, and let~$A^*_{sol}=\{uv\}$.
    Let $w_1u, ux_1, w_2v, vx_2 \in A(N)$ be arcs in~$N$. 
    After deleting $uv$, we suppress $u$ and $v$ and call the resulting network~$N'$.
    As~$N'$ is tree-based, it contains a base tree~$T$.
    Note that the arcs~$w_1x_1$ and~$w_2x_2$ are either arcs in~$T$ or not, in which case they are linking arcs.
    We distinguish the four possible cases, and show that~$L_{\cTB}(N)=1$ in each case.
    In each case, let~$p_z$ denote the parent of a newly added leaf~$z$.
    \begin{enumerate}
        \item $w_1x_1$ and $w_2x_2$ are both arcs in~$T$. But then the tree obtained by subdividing~$w_1x_1$ and~$w_2x_2$ by~$u$ and~$v$ respectively, is a base tree of~$N$. This contradicts the choice of~$N$, so this case cannot occur. 
        \item $w_1x_1$ is an arc in~$T$ and~$w_2x_2$ is a linking arc.
        We add a leaf~$z$ to the arc~$vx_2$ in~$N$. 
        The tree~$T$ with an additional path~$w_2vp_z z$ is a base tree of~$N$.
        \item $w_1x_1$ is a linking arc in~$T$ and~$w_2x_2$ is an arc in~$T$.
        We add a leaf~$z$ to the arc~$ux_1$ in~$N$.
        The tree~$T$ with an additional path $w_1up_zz$ is a base tree of~$N$.
        Note here that we could also have added a leaf to the arc~$uv$.
        \item $w_1x_1$ and~$w_2x_2$ are both linking arcs.
        We add a leaf~$z$ to the arc~$vx_2$ in~$N$.
        The tree~$T$ with an additional path~$w_1uvp_z z$ is a base tree of~$N$.
    \end{enumerate}
    So~$L_{\cTB}(N)=1$ in all four cases and thus the base case holds.

    Suppose now that the claim holds for all networks where its~$A^*_{\cTB}$ measure is at most~$k-1$, where~$k>1$.
    We show that the claim holds for a network~$N$ with~$A^*_{\cTB}(N) = k$.
    Let~$uv$ be the final element in the sequence~$A^*_{sol}$.
    Let~$N'$ denote the network obtained by applying the first~$k-1$ valid arc deletions in~$A^*_{sol}$.
    Then removing~$uv$ from~$N'$ yields a tree-based network.
    From the proof of the base case above, one can also add a leaf either to the outgoing arc of~$u$ which is not~$uv$, or a leaf to the outgoing arc of~$v$, to make~$N'$ tree-based.
    We wish to reflect this change to the original network~$N$.
    In the former case, let~$N''$ denote the network obtained by adding a leaf~$z$ to one of the outgoing arcs of~$u$; in the latter case, let~$N''$ denote the network obtained by adding a leaf~$z$ to the outgoing arc of~$v$. 
    Observe that applying the first~$k-1$ elements of~$A^*_{sol}$, with potentially changes in some of the arc endpoints as a result of adding~$p_z$, to~$N''$ gives a tree-based network.
    
    Then~$A^*_{\cTB}(N'')\le A^*_{\cTB}(N)-1 = k-1$, and thus by induction hypothesis, we may add at most~$k-1$ leaves to~$N''$ to make it tree-based.
    It follows that we may add at most~$k$ leaves to~$N$ to make it tree-based.
\end{proof}
In the proof above, exactly one leaf is added to `mimic' a valid arc deletion.
In practice, one leaf could replace multiple valid arc deletions.
We give an example network~$N$ in \cref{fig:example_Ltb_l_Astb}, for which~$L_{\cTB}(N)=1$ and~$A^*_{\cTB}(N)=2$.

\ej{We have not been able to find similar incomparability results for tree-child networks.}
Consider the non-tree-child network~$N$ consisting of a single crown on~$4$ arcs (together with root arcs and arcs leading to the two leaves from the reticulations).
As~$N$ contains~$2$ omnians, we must add at least two leaves (for more details, see \Cref{lem:L_TC=Omnian}).
On the other hand, removing any of the four reticulation arcs yields a network on a single reticulation, which must be tree-child.
So in this case,~$A^*_{\cTC}(N) = 1$ and $L_{\cTC}(N) = 2$.
We suspect that for tree-child networks, the valid arc deletion number is at most the leaf addition number, as we expect to find a sequence of valid arc deletions where at least one omnian is removed each time.
We give a conjecture on this in \Cref{sec:discussion}.

\subsection{\texorpdfstring{Comparing~$L$ and~$A$}{Comparing L and A}}\label{subsec:CompLA}

In this subsection, we show that the measures are incomparable for the classes~$\cOr$ and~$\cTB$ and comparable for~$\cTC$.

\begin{theorem}\label{thm:IncompLorAor}
    The measures $L_{\cOr}$ and $A_{\cOr}$ are incomparable.
\end{theorem}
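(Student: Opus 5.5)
We will reuse the two networks $N_1$ and $N_2$ of \Cref{fig:Lor_Aor_star_incomp}, whose values are announced in the caption: $L_{\cOr}(N_1)=1<2=A_{\cOr}(N_1)$ and $L_{\cOr}(N_2)=2>1=A_{\cOr}(N_2)$. The leaf addition values $L_{\cOr}(N_1)=1$ and $L_{\cOr}(N_2)=2$ were already established in the proof of \Cref{thm:IncompLorA*or}, so they can be quoted directly. It therefore remains to pin down the arc deletion measure $A_{\cOr}$ on both networks.

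For the upper bounds we use \Cref{obs:AleA^*}, which gives $A_{\cOr}(N)\le A^*_{\cOr}(N)$. The proof of \Cref{thm:IncompLorA*or} showed $A^*_{\cOr}(N_1)=2$ and $A^*_{\cOr}(N_2)=1$, hence $A_{\cOr}(N_1)\le 2$ and $A_{\cOr}(N_2)\le 1$. Since $N_2$ is not orchard, $A_{\cOr}(N_2)\ge 1$, so $A_{\cOr}(N_2)=1<2=L_{\cOr}(N_2)$. This settles the second network.

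The main work is the lower bound $A_{\cOr}(N_1)\ge 2$. Since $N_1$ is not orchard, $A_{\cOr}(N_1)\ge 1$, and we must rule out that a single, possibly invalid, arc deletion makes $N_1$ orchard. The reticulation arcs of $N_1$ are $e,e_1,\dots,e_8$. For each valid one among them, deleting it is a valid arc deletion, and the case analysis in the proof of \Cref{thm:IncompLorA*or} already shows that the result is not orchard. Such an arc behaves identically under $A$ and $A^*$, because the clean-up is the same.

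So the only new cases are the invalid reticulation arcs of $N_1$. We identify these using \Cref{lem:ValidEdgeCharacterization}; they are the arcs whose tail is a reticulation, or whose deletion creates parallel arcs. For each such arc we will carry out the deletion together with the full clean-up: remove the unlabelled outdegree-$0$ vertices, suppress the resulting degree-2 vertices, and merge parallel arcs. We then run the cherry-picking reduction on the resulting network. Because the order of reductions does not matter \cite{janssen2021cherry}, it suffices to reduce greedily until no (reticulated) cherry remains and check that more than one leaf is left. As in the valid case, this can be certified by noting that every pair of remaining leaves is at undirected distance at least $4$. If, as the caption suggests, all reticulation arcs of $N_1$ happen to be valid, this step is vacuous and $A_{\cOr}(N_1)=A^*_{\cOr}(N_1)=2$ follows immediately.

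This yields $A_{\cOr}(N_1)=2>1=L_{\cOr}(N_1)$, and combined with the result for $N_2$ it proves that $L_{\cOr}$ and $A_{\cOr}$ are incomparable. The main obstacle is this case check for invalid arcs in $N_1$, since the clean-up cascades can change the network more drastically than a valid deletion does.
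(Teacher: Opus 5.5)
\textbf{Gap: the lower bound $A_{\cOr}(N_1)\ge 2$ is not established.} Your treatment of $N_2$ matches the paper, as do the upper bounds via \Cref{obs:AleA^*}. Your reduction for $N_1$ is also sound as far as it goes: any single \emph{valid} deletion is already ruled out by $A^*_{\cOr}(N_1)=2$, so only invalid reticulation arcs need attention. But the proposal stops there, and the two concrete claims you make about $N_1$ are unreliable.

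First, the reticulation arcs of $N_1$ are not just $e,e_1,\dots,e_8$. The sequence for $N_1+z$ in the proof of \Cref{thm:IncompLorA*or} has $12$ reductions, of which only $4$ remove a leaf. So $N_1$ has $8$ reticulations and $16$ reticulation arcs, and the paper's proof explicitly speaks of reticulation arcs ``that are not $e_i$''. A search for invalid arcs confined to the nine labelled arcs would therefore need justification.

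Second, the caption does not suggest that all arcs are valid. The equality $A^*_{\cOr}(N_1)=A_{\cOr}(N_1)$ is a conclusion about the measures, not a statement about validity. Your fallback that ``this step is vacuous'' therefore has no basis.

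\textbf{What closes it.} The paper supplies two facts.
\begin{itemize}
\item Every reticulation arc other than the $e_i$ is valid, and deleting any one of them leaves all four leaves with a reticulation parent. The result then has no cherry or reticulated cherry, so it is not orchard. Hence a single deletion can only help if it is one of the labelled arcs.
\item The arc-by-arc analysis of deleting each of $e,e_1,\dots,e_8$ in the proof of \Cref{thm:IncompLorA*or} shows that none of these yields an orchard network. The paper reuses this analysis for arbitrary, not necessarily valid, deletions.
\end{itemize}
Together these mean every potentially invalid arc lies among the $e_i$ and has already been checked, so no new case analysis is needed.

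A minor point: your stopping test ``more than one leaf is left'' is not the right certificate in general. A one-leaf network with reticulations, as in \Cref{fig:example_Lor_Aorinf}(a), is also non-orchard. What you need is that the greedy reduction gets stuck before reaching the trivial one-leaf network.
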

\begin{proof}
As in the proof of \Cref{thm:IncompLorA*or}, we consider the networks~$N_1,N_2$ in~\Cref{fig:Lor_Aor_star_incomp}.
\ym{From the proof of \Cref{thm:IncompLorA*or}, we know that~$L_\cOr(N_1) = 1$ and~$L_\cOr(N_2) = 2$.}
\ym{It remains to show that~$A_{\cOr}(N_1)>1$ and that~$A_\cOr(N_2) = 1$.}
\paragraph{$A_\cOr(N_1)>1$:}
Note that all reticulation arcs that are not~$e_i$ are valid.
All leaves keep a reticulation parent in any of the graphs obtained by removing such arcs.
So at least one of the~$e_i$ arcs must be deleted.
As seen in the proof of \Cref{thm:IncompLorA*or}, deleting one of these arcs does not suffice.
So~$A_{\cOr}(N_1)>1$.

\paragraph{$A_\cOr(N_2) = 1$:}
By proof of \Cref{thm:IncompLorA*or},~$A^*_\cOr(N_2) = 1$. 
By~\Cref{obs:AleA^*},~$A_\cOr(N_2) \le A^*_\cOr(N_2) = 1$.
Finally, as the network~$N_2$ is non-orchard, we must have~$A_\cOr(N_2)\ge1$, meaning~$A_\cOr(N_2) = 1$. \qedhere
\end{proof}

\begin{theorem}\label{thm:IncompLTBATB}
    The measures $L_{\cTB}$ and $A_{\cTB}$ are incomparable.
\end{theorem}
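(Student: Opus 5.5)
We have to exhibit two networks: one network $N_1$ with $L_{\cTB}(N_1) > A_{\cTB}(N_1)$, and one network $N_2$ with $L_{\cTB}(N_2) < A_{\cTB}(N_2)$.

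\emph{The network $N_2$ with $L_{\cTB}(N_2)<A_{\cTB}(N_2)$.} This direction is the easier one, and I would reuse the network $N$ of \Cref{fig:example_Ltb_l_Astb}, which has $L_{\cTB}(N)=1$. It remains to show that no single (not necessarily valid) arc deletion makes it tree-based.
\begin{itemize}
\item For a valid arc, this is already stated in the figure discussion.
\item For an invalid arc, deleting it and cleaning up removes more structure. I would check each such arc by computing the zig-zag decomposition of the result and finding a W-fence, using \Cref{lem:TB=NoW}.
\end{itemize}
If some invalid deletion happens to succeed, I would instead build a larger example. The natural choice is a long W-fence in which one leaf breaks the fence in the middle, but which needs two arc deletions. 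If even that fails, I would take an example with $A_{\cTB}=\infty$, in the spirit of \Cref{fig:example_Lor_Aorinf}, for instance a network in which every deletion keeps a W-fence.

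\emph{The network $N_1$ with $L_{\cTB}(N_1)>A_{\cTB}(N_1)$.} By \Cref{thm:ComparableLTBA*TB} we have $L_{\cTB}\le A^*_{\cTB}$, so the witness must exploit an invalid arc deletion. Such a deletion can remove a large part of the network at once, because an indegree-2 outdegree-0 vertex gets deleted during cleaning up. The plan is to build $N_1$ with the following structure:
\begin{itemize}
\item a reticulation $u$ whose child is a reticulation $r$, so that the arc $ur$ is invalid by \Cref{lem:ValidEdgeCharacterization}, case 1;
\item two disjoint W-fences sitting above $u$, arranged so that the arcs into $u$ belong to both of these fences' structures.
\end{itemize}
Deleting $ur$ deletes $u$ together with its incoming arcs, which destroys both W-fences, and $r$ is suppressed. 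Hence $A_{\cTB}(N_1)=1$.

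For leaf addition, each leaf added to an arc splits only the single maximal zig-zag trail containing that arc. A W-fence is only removed if a leaf is attached to one of its own arcs, so two arc-disjoint W-fences need at least two leaves. Therefore $L_{\cTB}(N_1)\ge 2$.

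\emph{Main obstacle.} The hardest step is to realise this concretely. One simple attempt is to make the two parents of $u$ each be the bottom reticulation tail of a separate W-fence. However, the tails of a W-fence are reticulations, and the arc out of each such tail is itself part of the fence. So the arcs of the fences would run directly into $u$, and the two fences might merge into one trail. I would draw a small explicit network, compute its unique zig-zag decomposition (\Cref{thm:UniqueMaxZigZag}), and adjust the construction until the following three facts hold:
\begin{itemize}
\item it has exactly two W-fences;
\item both W-fences are cut by the single deletion of $ur$, and the result has no W-fence;
\item it is not the case that one leaf creates a path cover; this is verified via \Cref{lem:TB=NoW} after every single-leaf addition, reduced by symmetry to a few cases.
\end{itemize}
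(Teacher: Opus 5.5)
Your proposal is a search plan rather than a proof. The statement is existential, and in neither direction do you fix a network and verify both values.

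\textbf{The direction $L_{\cTB}<A_{\cTB}$.} Reusing the network of \Cref{fig:example_Ltb_l_Astb} leaves open exactly the hard part: that no \emph{invalid} deletion makes it tree-based. This is a real risk. The outgoing arc of an endpoint of a W-fence has a reticulation tail, so it is always available to $A$ (it is invalid by case~1 of \Cref{lem:ValidEdgeCharacterization}). Deleting it deletes that endpoint together with its incoming arcs, which can easily destroy the only W-fence of a small network. The paper does not reuse that network. It builds a dedicated one (\Cref{fig:counter_example_Ltb_l_Atb}) with a W-fence $W_1$ and an N-fence $N_1$, arranged so that every single reticulation-arc deletion, inside or outside $W_1$, still leaves a W-fence, and it checks the cases. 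Your final fallback cannot work at all. Every arc deletion lowers the reticulation number by at least one, and a network with at most one reticulation has no W-fence. Hence $A_{\cTB}(N)\le\max(r-1,0)$ is always finite; unlike $A^*$, the measure $A$ never takes the value $\infty$.

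\textbf{The direction $L_{\cTB}>A_{\cTB}$.} Your bound $L_{\cTB}\ge 2$ from two W-fences is fine (\Cref{lem:L_TB=W-fences}). Your observation via \Cref{thm:ComparableLTBA*TB} that the single deletion must be invalid is also correct. However, the mechanism you describe is not realizable. The two incoming arcs of the reticulation $u$ share their head, so they lie in one maximal zig-zag trail of the unique decomposition (\Cref{thm:UniqueMaxZigZag}). They cannot belong to two different W-fences, and deleting $u$ with its incoming arcs directly affects only one trail.

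The missing idea is the cascade in the cleaning-up process. Take a vertical stack of reticulations $x_1\to x_2\to x_3\to x_4$ and delete the arc $x_3x_4$. Then $x_3$ becomes an unlabelled outdegree-0 vertex and is deleted, which makes $x_2$ outdegree-0, and then $x_1$. So the whole stack disappears while $x_4$ is suppressed. Every stack arc $x_ix_{i+1}$ has a reticulation tail and is therefore an end arc of its maximal zig-zag trail. Consequently, two different W-fences can be made to end in different stack arcs, and both can be broken by one deletion. This is what the paper's network $N'$ in \Cref{fig:counter_example_Ltb_g_Atb} does. It has two W-fences, so $L_{\cTB}(N')=2$. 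Deleting the single arc $e$ removes all four reticulations of the stack and leaves a network with one reticulation, which is tree-based, so $A_{\cTB}(N')=1$.
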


\begin{figure}
    \centering
    \subfloat[]{\includegraphics[height=5cm]{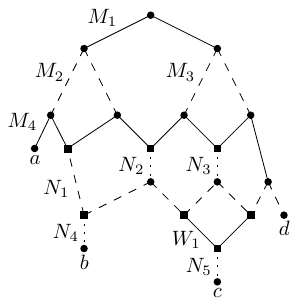}}
    \subfloat[]{{\includegraphics[height=5cm]{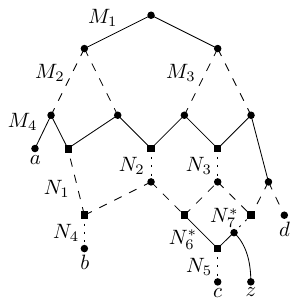}}
    \label{subfig:AddLeafLA_TB}}
    \hfill
    \subfloat[]{{\includegraphics[height=5cm]{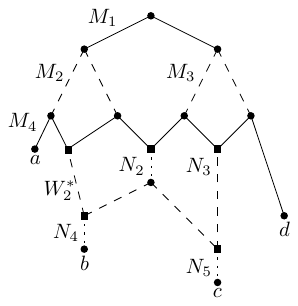}}
    \label{subfig:DeleteArcLA_TB1}}
    \subfloat[]{{\includegraphics[height=5cm]{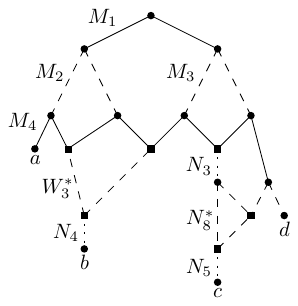}}
    \label{subfig:DeleteArcLA_TB2}}
    \caption{An example of a non-tree-based network~$N$ where~$L_\cTB(N) < A_\cTB(N)$.
    See \Cref{thm:IncompLTBATB} for a proof. (a) A non-tree-based network~$N$. (b) A network obtained by adding a leaf~$z$ to the W-fence~$W_1$ of~$N$. (c) A network obtained by deleting an arc in~$W_1$ of~$N$. (d) A network obtained by deleting an arc in~$N_1$ of~$N$.}
    \label{fig:counter_example_Ltb_l_Atb}
\end{figure}

\begin{figure}
    \centering
    \subfloat[]{\includegraphics[height=4cm]{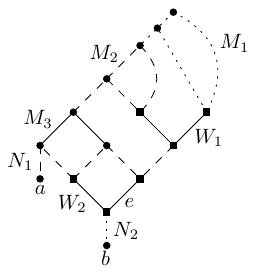}}
    \subfloat[]{\includegraphics[height=4cm]{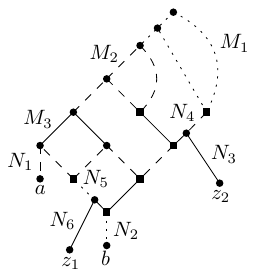}
    \label{subfig:Ltb_geq_Atb_fences_LeafAdd}}
    \hspace{0.03\textwidth}%
    \subfloat[]{\includegraphics[height=4cm]{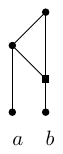}
    \label{subfig:Ltb_geq_Atb_fences_EdgeDelete}}
    \caption{An example of a non-tree-based network~$N$ where~$L_\cTB(N)>A_\cTB(N)$.
    See \Cref{thm:IncompLTBATB} for a proof. (a) A non-tree-based network~$N$. (b) $N$ with two leaves~$z_1,z_2$ added. (c) $N$ with the arc~$e$ removed.}
    \label{fig:counter_example_Ltb_g_Atb}
\end{figure}

\begin{proof}

\ym{We prove the lemma by using the two networks~$N, N'$ in~\Cref{fig:counter_example_Ltb_l_Atb,fig:counter_example_Ltb_g_Atb}, respectively (we refer to the network in \Cref{fig:counter_example_Ltb_g_Atb} as~$N'$ for convenience).} 
We show that~$L_{\cTB}(N) = 1$ and $A_{\cTB}(N) \ge 2$, and that~$L_\cTB(N') = 2$ and $A_\cTB(N') = 1$, thereby proving the claim. 
Note that in both \Cref{fig:counter_example_Ltb_l_Atb,fig:counter_example_Ltb_g_Atb}, the unique zig-zag decomposition for each network is indicated.
Our arguments will be based on the networks in the figures.

\paragraph{$L_\cTB(N) = 1$:}
The network~$N$ is non-tree-based, so~$L_\cTB(N)\ge 1$.
\Cref{subfig:AddLeafLA_TB} shows that one leaf suffices, as the zig-zag decomposition contains no W-fences upon adding~$z$.  
\paragraph{$A_\cTB(N) \ge 2$:}
\ym{Again, as~$N$ is non-tree-based,~$A_\cTB(N)\ge 1$.
Consider the W-fence~$W_1$.
If we remove a reticulation arc which has no endpoints in~$W_1$, then the resulting network will contain a W-fence.
This would mean that~$A_\cTB(N)\ge 2$.
On the other hand, if we remove a reticulation arc which has endpoints in~$W_1$, then we also end up with a W-fence in the resulting network. 
There are six possibilities here; we illustrate two in \Cref{subfig:DeleteArcLA_TB1,subfig:DeleteArcLA_TB2}.
The other 4 cases can be argued in the same way (by removing arcs).
This implies~$A_\cTB(N)\ge 2$.}
\paragraph{$L_\cTB(N') = 2$:}
\ym{Note that there are two W-fences~$W_1$ and~$W_2$ in~$N'$.
A leaf addition can remove at most one W-fence; it follows that~$L_\cTB(N') \ge 2$.
We see that~$L_\cTB(N') = 2$ from \Cref{subfig:Ltb_geq_Atb_fences_LeafAdd}. \footnote{Note that $L_\cTB(N)$ is equal to the number of W-fences. See \Cref{lem:L_TB=W-fences}.}
}

\paragraph{$A_\cTB(N') = 1$:}
\ym{Consider the network obtained by deleting the arc $e$.
All the 4 reticulations in the stack (3 above $e$) are removed, and the resulting network contains just~$1$ reticulation (\Cref{subfig:Ltb_geq_Atb_fences_EdgeDelete}).
Such a network must be} tree-based.
Therefore,~$A_{\cTB}(N')=1$.



\end{proof}

While the following lemma fits better in \Cref{subsec:LTC}, we include it here as we need it to prove \Cref{thm:CompLTCATC}.

\begin{lemma}\label{lem:L_TC=Omnian}
    Let~$N$ be a network. Then $L_{\cTC}(N)$ is equal to the number of omnians. 
    Moreover,~$N$ can be made tree-child by adding a leaf to exactly one outgoing arc of each omnian. 
\end{lemma}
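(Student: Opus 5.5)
The plan is to prove matching lower and upper bounds. We use throughout that a network is tree-child if and only if it has no omnians (\Cref{subsec:prelim_TC}). Let $\Omega$ be the set of omnians of~$N$.

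\textbf{Lower bound.} We show that adding a single leaf reduces the number of omnians by at most one; then $L_{\cTC}(N)\ge|\Omega|$ follows, since the final network has zero omnians. Suppose we add a leaf $x$ to an arc $uv$, creating a new vertex $w$ with children $v$ and $x$. We examine how the omnian status of each vertex can change.
\begin{itemize}
\item The new vertex $w$ has the leaf $x$ as a child, so it is not an omnian.
\item The only old vertex whose set of children changes is $u$: its child $v$ is replaced by the tree vertex $w$. So at most $u$ can lose its omnian status.
\item Every other vertex keeps the same children, and the types of those children are unchanged ($v$ keeps its in- and outdegree). Hence no other vertex gains or loses omnian status.
\end{itemize}
Thus each leaf addition destroys at most one omnian and creates none.

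\textbf{Upper bound.} For each omnian $u\in\Omega$, pick one outgoing arc $ur_u$; here $r_u$ is a reticulation. Add a leaf on each chosen arc, all simultaneously; these arcs are distinct because their tails are distinct. In the resulting network, each former omnian $u$ now has a child $w_u$, which is a tree vertex with a leaf child, so $u$ is no longer an omnian. The new vertices $w_u$ are not omnians. By the observation above, every other vertex keeps its children and their types, so no new omnian appears. Hence the result is tree-child after $|\Omega|$ additions, which also proves the ``moreover'' clause.

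\textbf{Main point to check.} The step needing care is the claim that no vertex's child changes type. A leaf addition subdivides an arc but preserves the in- and outdegree of every existing vertex, so reticulations stay reticulations and tree vertices stay tree vertices. Simultaneous additions on distinct arcs do not interfere, because each affects only the tail of its own arc. With this verified, combining the two bounds gives $L_{\cTC}(N)=|\Omega|$.
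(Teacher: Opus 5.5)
Your proposal is correct and follows the same route as the paper. The lower bound treats the number of omnians as a potential that drops by at most one per leaf addition, since only the tail of the subdivided arc changes its children and the new vertex has a leaf child. The upper bound adds a leaf on one outgoing arc of each omnian.
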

\begin{proof}
	By definition, a network is tree-child if and only if it contains no 
	omnians. We show that every leaf addition can result in a network with one 
	omnian fewer than that of the original network.
	Let~$uv$ be an arc where~$u$ is an omnian. Add a leaf~$x$ to~$uv$. In the 
	resulting network,~$u$ has a child (the parent of~$x$) that is a tree 
	vertex, and it is no longer an omnian. The newly added tree vertex has a 
	leaf child~$x$; the parent-child combinations remain unchanged for the rest 
	of the network, so at most one omnian (in this case~$u$) can be removed per 
	leaf 
	addition. It follows that~$L_{\cTC}(N)$ is at least the number of omnians 
	in~$N$. By targeting arcs with omnian tails, we can remove at least one 
	omnian for every leaf addition, so that~$L_{\cTC}(N)$ is at most the number 
	of omnians in~$N$. Therefore,~$L_{\cTC}(N)$ is exactly the number of 
	omnians in~$N$.
\end{proof}

\begin{theorem}\label{thm:CompLTCATC}
    The measures~$A_\cTC$ and~$L_\cTC$ are comparable.
    In particular, for all networks~$N$, we have $A_{\cTC}(N) \leq L_{\cTC}(N)$.
\end{theorem}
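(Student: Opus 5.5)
Because of \Cref{lem:L_TC=Omnian}, $L_{\cTC}(N)$ is the number $k$ of omnians of $N$. The plan is therefore to prove $A_{\cTC}(N)\le k$ by induction on $k$. The base case $k=0$ is immediate: a network without omnians is tree-child. For the inductive step it suffices to show the following: if $N$ has $k\ge 1$ omnians, then there is a single (not necessarily valid) reticulation arc deletion, followed by cleaning up, whose result $N'$ has at most $k-1$ omnians. Induction then gives $A_{\cTC}(N)\le 1+A_{\cTC}(N')\le 1+(k-1)=k$. Since arbitrary reticulation arc deletions are allowed, we never need to worry about validity. The cleaning up may remove many vertices, and that only helps as long as it creates no new omnians.

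To choose the arc, we pick an omnian $u$ that is lowest, i.e.\ no other omnian is reachable from $u$. We then delete an outgoing arc $ur$ of $u$, where $r$ is a reticulation. After the deletion, there are two cases. If $u$ is a tree vertex, it becomes indegree-1 outdegree-1 and is suppressed; its parent $p$ gets the other child $r'$ of $u$ as a child. If $u$ is a reticulation, it becomes an unlabelled outdegree-0 vertex and is deleted; this may cascade upwards, with parents losing a child and being suppressed or deleted. On the other side, $r$ becomes indegree-1 outdegree-1 and is suppressed, so its parent $v$ becomes a parent of the child $c$ of $r$. In every case $u$ itself disappears as an omnian, since it is removed or suppressed.

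The main obstacle is to show that no vertex becomes a new omnian. Each vertex whose children change needs its own check.
\begin{itemize}
\item Vertex $v$: it now has child $c$ in place of $r$. If $c$ is a reticulation, $v$ could newly become an omnian.
\item Vertex $p$ (when $u$ is a tree vertex): it now has child $r'$ in place of $u$. Since $r'$ is a reticulation, $p$ could newly become an omnian.
\item Parents affected by a cascading deletion: these need the same check.
\end{itemize}

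My first attempt to handle this is a careful choice of the omnian and of the deleted arc. Choosing $u$ lowest should help with $v$: if $c$ is a reticulation, we can try instead deleting the arc $vr$ rather than $ur$. Then $u$ gets child $c$ after suppression of $r$, and either some omnian vanishes or the omnian count is unchanged. Likewise, when $p$ would become an omnian, we can switch to deleting the arc into $r'$ from its other parent. If local switching fails, the fallback is to accept that one deletion may create one new omnian, provided it removes at least two. This amounts to a charging argument showing that every new omnian is paid for by an omnian destroyed in the same step (for instance, $u$ together with $p$ or $v$ being suppressed). The net change in the number of omnians is then at most $-1$, and the induction still goes through. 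I expect this case analysis around $p$ and $v$ to be the bulk of the work.
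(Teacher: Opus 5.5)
Your reduction to a one-step claim (a single reticulation-arc deletion strictly decreases the number of omnians, then induct via \Cref{lem:L_TC=Omnian}) is the paper's framework. However, that one-step claim is the entire content of the theorem, and the proposal does not prove it: it names candidate moves and a possible charging scheme, then defers the case analysis. Your default move, deleting $ur$ for a lowest omnian $u$, can genuinely fail.

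Take a root above a tree vertex $a$ with children $b,c$. Let $b$ have children $x,y$ and $c$ have children $v,v'$. Let $x,y$ be the parents of a reticulation $p$, let $p$ have child $u$, and let $u$ have children $r,r'$, whose other parents are $v,v'$ respectively. Hang a leaf off each of $x,y,v,v',r,r'$. Then $u$ is the only omnian. Yet deleting $ur$ (or $ur'$) suppresses $u$ and makes the reticulation $p$ an omnian, so the count stays at $1$.

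Your worry about $v$ is misplaced. The vertex $v$ only has its reticulation child $r$ replaced by $c$, which can never create an omnian. Moreover, for a lowest $u$, $c$ is not even a reticulation, since $r$ is not an omnian. The real danger is only at the parent of a suppressed tree vertex.

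The missing idea is the following. Suppose you delete an arc $th$ with $t$ a tree vertex. The other parent of $h$ merely has a reticulation child replaced. The parent $q$ of $t$ can become a new omnian only if the surviving child of $t$ is a reticulation, but then $t$ itself was an omnian and disappears. So such a deletion never increases the count, and you only need to choose $th$ so that some omnian surely dies. This holds modulo the parallel-arc cascades of cases 2--4 of \Cref{lem:ValidEdgeCharacterization}, which you would also have to check.

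Your switch to $vr$ achieves this when $v$ is a tree vertex, because $u$ inherits the non-reticulation child $c$. But you never handle the case where $v$ is a reticulation (likewise $v'$, or the other parent of the single child when $u$ is a reticulation). There, deleting $vr$ deletes $v$ and triggers an unanalysed upward cascade. In that case, climb the stack of reticulations above $v$ to a highest one $h$. Its parents are tree vertices: none is a reticulation by maximality, and the root cannot be a parent of a reticulation. Its child is a reticulation, so $h$ is an omnian, and deleting an incoming arc of $h$ removes it with no net new omnian.

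This stack-climbing step is exactly the last case of the paper's proof. The paper otherwise organises the argument through the zig-zag decomposition: long M- and N-fences and crowns first, then W-fences of length at least $4$, then stacks above W-fences of length $2$.
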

\begin{proof}
    Let~$N$ be a network. By \Cref{thm:UniqueMaxZigZag}, it has a unique zig-zag decomposition.
    Noting that~$L_{\cTC}(N)$ is the number of omnians (\Cref{lem:L_TC=Omnian}), we wish to show that we can reduce the number of omnians by at least one in a network with a single arc deletion. 
    This shows that one can `emulate' leaf additions using arc deletions. 
    We first show that if~$N$ contains an M-fence of length at least 6, an N-fence of length at least 5, or crowns, then there exists an arc deletion that reduces the number of omnians by at least one.
    Once we have shown that, we may assume without loss of generality that our network~$N$ contains M-fences of length at most 4, N-fences of length 1, or W-fences. Subsequently, all omnians in~$N$ must be peaks of W-fences (i.e., all its children are contained in the same fence); we argue that arcs may be deleted from W-fences in a certain manner to ensure the resulting network contains fewer omnians than that of~$N$. Repeating this argument until the network contains no more omnians shows the required result.

    Suppose that~$N$ contains an M-fence~$(a_1,\ldots, a_k)$ of length~$k\ge 6$. Observe that $\tail(a_3)$ is an omnian.
    We delete the arc~$a_2$ and call the resulting network~$N'$. Upon cleaning up, we suppress vertices $\tail(a_2)$ and $\head(a_2)$.
    Since~$\head(a_1)$ is necessarily a tree vertex by definition of M-fences, the parent of $\head(a_1)$ in~$N'$, which corresponds to the parent of~$\tail(a_2)$ in~$N$, cannot be an omnian in~$N'$.
    Observe that $\head(a_2)$ is an omnian in~$N$ if and only if $\tail(a_3)$ is an omnian in~$N'$. 
    Therefore, if~$\head(a_2)$ was an omnian in~$N$, then it ceases to be an omnian in~$N'$ since it is suppressed. On the other hand, if~$\head(a_2)$ was not an omnian in~$N$, then~$\tail(a_3)$ is no longer an omnian in~$N'$. No other new omnians are created. This means in either situation, the number of omnians in~$N'$ is exactly one fewer than that in~$N$.


    \ej{Suppose now that~$N$ contains an N-fence~$(a_1,\ldots,a_k)$ of length~$k\ge 5$. If the arc~$a_2$ is valid, then the same argument as above applies by deleting~$a_2$. Therefore, we consider the case when~$a_2$ is invalid.}

Since the endpoints of~$a_k$ are both tree vertices, it follows from \Cref{lem:ValidEdgeCharacterization} that $k=5$. 
In this case, we consider the local structure containing $(a_1,\ldots,a_5)$.
We are in either Case 2 or 4 of \Cref{lem:ValidEdgeCharacterization}.
If we are in Case 4, then we may simply remove the arc~$a_3$, which creates a parallel arc that is subsequently removed.
This reduces the number of omnians by~$2$ using one arc deletion (the omnians are $\tail(a_1)$ and $\tail(a_2)$).
So we are in Case 2.
Observe that although $a_2$ is invalid, there exists another arc in this structure (namely $a_3$) whose deletion is valid.
Let $N'$ be obtained from $N$ by deleting $a_3$ and cleaning up. 
\ym{If~$\head(a_3)$ was an omnian in~$N$, then~$\tail(a_4) = \tail(a_5)$ is now an omnian in~$N'$.
As~$\head(a_3)$ and $\tail(a_3)$ become suppressed in~$N'$, the number of omnians has decreased by one; indeed, all other vertices retain their status as omnian or non-omnian.
On the other hand, if~$\head(a_3)$ was not an omnian in~$N$, then no new omnians are created, and we reduce the number of omnians by one (by suppressing $\tail(a_2)$).
}

\ym{The case when~$N$ contains a crown follows a similar argument as for the M-fence of length at least~$6$, as all arcs in the crown would be valid.}
\medskip

\ym{So suppose that~$N$ contains M-fences of length at most 4, N-fences of length 1, or W-fences. 
All omnians must be peaks of W-fences.
If there is a W-fence~$(a_1,\ldots, a_k)$ of length~$k\ge 4$, then we remove~$a_2$.
Then the parent of~$\tail(a_2)$ becomes an omnian and~$\tail(a_2)$ ceases to be an omnian as it is suppressed. 
If the child of~$\head(a_2)$ is a reticulation, then~$\head(a_2)$ ceases to be an omnian as it is suppressed, and all other vertices retain their omnian and non-omnian status. 
On the other hand, if the child of~$\head(a_2)$ is not a reticulation, then $\tail(a_2)$ becomes a non-omnian.
In both cases, the number of omnians decreases by one by an arc deletion.}

\ym{So finally suppose that all W-fences are of length~$2$. Given a W-fence, follow the `stack' of reticulations upwards until we reach a tree vertex.
Since all M-fences are of length at most 4 and since all N-fences are of length at most~$1$, all of the reticulation arcs in such a path must all be contained in distinct W-fences.
We remove the highest reticulation arc~$a$ in such a path (which must necessarily be contained in an M-fence). 
This creates no omnians, while reducing by one the number of omnians, by reducing one from the W-fence immediately below~$a$. This completes the proof. \qedhere}


\end{proof}

\section{\texorpdfstring{$L$: Leaf Addition}{L: Leaf Addition}}
\label{sec:LeafProximity}

Recall that~$L_{\cC}(N)$ denotes the minimum number of leaf additions required to make the network~$N$ a member of~$\cC$ (\Cref{def:leaf_add}). 
We now give complexity results and bounds on the leaf addition proximity measure for tree-child, orchard, and tree-based networks. We show that for tree-child and tree-based networks, the measure can be computed in polynomial time (\Cref{thm:L_TCisPoly,thm:L_TB=polynomial}), but for orchard networks the problem is NP-hard to compute (\Cref{thm:L_Or=Hard}). This hardness proof is given in \Cref{sec:Hardness}.

\subsection{Tree-Child Networks}\label{subsec:LTC}
We show in \Cref{thm:L_TCisPoly} that this number can be computed in polynomial time. 
In \Cref{thm:L_TCBound} we give an upper bound based on the number of reticulations in the network.

\begin{theorem}\label{thm:L_TCisPoly}
    Let~$N$ be a network. Then~$L_{\cTC}(N)$ can be computed in~$O(|N|)$ time.
\end{theorem}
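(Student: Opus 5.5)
The plan is to reduce the computation to counting omnians and then observe that this count takes linear time. By \Cref{lem:L_TC=Omnian}, $L_{\cTC}(N)$ equals the number of omnians of~$N$. It therefore suffices to count the internal vertices all of whose children are reticulations.

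First, I will fix the input representation. $N$ is given by adjacency lists, so each vertex can access its children and parents. The degree of each vertex is known, or can be obtained in one pass over the arc set. Since $N$ is binary, $|A(N)| \le \tfrac{3}{2}|V(N)|$, so $|N| = O(|V(N)|)$. One pass over all vertices classifies each one as root, tree vertex, reticulation or leaf from its in- and outdegree. Store this classification in an array indexed by vertex.

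Second, I will make one more pass over the internal vertices (tree vertices and reticulations). For each internal vertex $u$, inspect its at most two children and test, using the array, whether every child is a reticulation. If so, increment a counter. Each vertex is processed once with $O(1)$ work, because outdegree is at most~$2$. Each arc is therefore examined a constant number of times, and the total time is $O(|V(N)|+|A(N)|)=O(|N|)$.

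Finally, the counter equals the number of omnians, which equals $L_{\cTC}(N)$ by \Cref{lem:L_TC=Omnian}. If one also wants an optimal set of leaf additions, \Cref{lem:L_TC=Omnian} supplies it within the same bound: for each omnian found, record one of its outgoing arcs. I do not expect a real obstacle here. The only point needing care is that the linear bound relies on bounded degree in binary networks, and that the root and leaves are correctly excluded from being omnians.
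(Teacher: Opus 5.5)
Your proposal is correct and follows essentially the same route as the paper: reduce to counting omnians via \Cref{lem:L_TC=Omnian}, then scan all vertices and test whether every child of each vertex is a reticulation, which is constant work per vertex because degrees are at most~$3$. Your added detail about the input representation and about excluding the root and leaves only makes explicit what the paper's proof leaves implicit.
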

\begin{proof}
    We first show that the number of omnians of~$N$ can be computed in~$O(|N|)$ 
    time, by checking, for each vertex, the indegrees of its children. A vertex 
    is an omnian if and only if all of its children are of indegree-2. Since 
    the degree of every vertex is at most 3, each search within the for loop (over all vertices)
    takes constant time. The for loop iterates over the vertex set of size~$O(|N|)$. 
    By \Cref{lem:L_TC=Omnian}, since~$L_{\cTC}(N)$ is the number of omnians in~$N$, we can compute~$L_{\cTC}(N)$ in~$O(|N|)$ time.
\end{proof}

\begin{theorem}\label{thm:L_TCBound}
   Let $N$ be a network with~$r$ reticulations. If~$r\le 1$, then~$L_{\cTC}(N) = 0$. Else, $L_{\cTC}(N) \leq \floor{\frac{3}{2}r - \frac{1}{2}}$. \ym{When~$r>1$, this bound is tight.}
\end{theorem}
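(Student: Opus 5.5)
By \Cref{lem:L_TC=Omnian}, $L_{\cTC}(N)$ equals the number of omnians of~$N$. So the plan is to bound the number of omnians by counting the incoming arcs of reticulations, and then to give an explicit family of networks showing that the bound is attained.

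\textbf{Upper bound.} Let~$t$ be the number of omnians that are tree vertices and~$s$ the number that are reticulations. A tree-vertex omnian contributes two arcs ending at reticulations, and a reticulation omnian contributes one. Since there are exactly~$2r$ reticulation arcs, $2t+s\le 2r$. Next, take a reticulation that is lowest in a topological order; its child is a tree vertex or a leaf, so it is not an omnian. Hence $s\le r-1$. Combining,
\[
t+s=\tfrac12\bigl((2t+s)+s\bigr)\le \tfrac12\bigl(2r+r-1\bigr)=\tfrac{3r-1}{2},
\]
and integrality gives the floor. For $r\le1$ I will argue directly. If $r=0$ there is nothing to show. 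If $r=1$, the unique reticulation has a non-reticulation child, so it is not an omnian. Each parent of the reticulation (a tree vertex or the root) has at most one reticulation child, because its other child cannot be the same reticulation without creating parallel arcs. So no vertex is an omnian and $L_{\cTC}(N)=0$. I should also double-check that the root cannot count as an omnian; it is not an internal vertex, so the definition excludes it.

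\textbf{Tightness for $r\ge2$.} I will build a stack $\rho_1\rho_2,\rho_2\rho_3,\ldots,\rho_{r-1}\rho_r$ of reticulations, with $\rho_r$ having a leaf child. This makes $\rho_1,\ldots,\rho_{r-1}$ omnians, so $s=r-1$. The remaining free incoming arcs number $2$ for~$\rho_1$ and $1$ for each of $\rho_2,\ldots,\rho_r$, for a total of $r+1$. I will pair these free slots by tree vertices whose two children are both reticulations. To avoid parallel arcs, $\rho_1$'s two slots must go to different tree vertices, so I pair $(\rho_1,\rho_2)$, $(\rho_1,\rho_3)$, $(\rho_4,\rho_5)$, and so on. This yields $\lfloor (r+1)/2\rfloor$ omnian tree vertices. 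If $r+1$ is odd, the one leftover slot is filled by a non-omnian tree vertex.

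The new tree vertices are attached to a caterpillar backbone hanging below the root. Each backbone vertex receives a leaf or one of these tree vertices as a child, so no backbone vertex is an omnian. All arcs point downward from the backbone, to the tree vertices, to the stack, so the network is acyclic. The omnian count is then $(r-1)+\lfloor (r+1)/2\rfloor=\lfloor (3r-1)/2\rfloor$.

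\textbf{Main obstacle.} The main step needing care is the validity of this construction. Triangles such as $u\rho_1,u\rho_2,\rho_1\rho_2$ are permitted, but I must verify that there are no parallel arcs and that the degrees are exactly right. I must also handle the small cases $r=2$ and $r=3$, and the parity of~$r$, separately.
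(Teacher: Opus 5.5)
Your proof is correct and follows the paper's argument: the same count $2t+s\le 2r$ together with $s\le r-1$ (a lowest reticulation is not an omnian), and the same extremal construction, namely a stack of $r$ reticulations whose free slots are paired as $(\rho_1,\rho_2),(\rho_1,\rho_3),(\rho_4,\rho_5),\ldots$, which are exactly the paper's arcs $u_1v_1,u_1v_2,u_2v_1,u_2v_3$ and $u_iv_{2i-2},u_iv_{2i-1}$. The only difference is that the paper handles $r=2$ with a crown on four arcs, whereas your stack already works there: one omnian tree vertex on $(\rho_1,\rho_2)$ plus a non-omnian filler for $\rho_1$'s second slot gives $2=\lfloor 5/2\rfloor$ omnians, so no separate case is needed.
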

\begin{proof}
    If~$r\le 1$, then~$N$ is either a tree or a network with a single reticulation.
    In the latter case, every tree vertex has at least one non-reticulation child, and the single reticulation must have a non-reticulation child, so it must be tree-child.
    So suppose that~$N$ contains~$r\ge2$ reticulations,~$t$ omnians that are tree vertices, and~$s$ omnians that are reticulations. 
    Every tree vertex omnian is the tail of two reticulation arcs; every reticulation omnian is the tail of one reticulation arc. 
    Observing that every arc has exactly one tail, we must have that
    $s+2t \le 2r$.
    Rearranging, and using the fact that lowest reticulations cannot be omnians, so~$s\le r-1$, gives 
    \[L_{\cTC}(N) = \text{Number of omnians in~$N$} = s+t \le r + \frac{s}{2} \le \frac{3r-1}{2}.\]
    Since~$L_\cTC(N)$ is an integer, we may take the floor function.

We now show that for~$r>0$, the bound is tight. 
For~$r=2$, consider the network with two reticulations that share the same two parents (i.e., a crown on $4$ arcs). The parents are both omnians. For the~$r\ge 3$ case, 
we construct the following network on~$r$ reticulations (see \Cref{fig:TightL_TC}).
Let~$v_1\ldots v_r$ be a path of reticulations. 
Observe that we maximize the number of omnians if all but one (the lowest) reticulation is an omnian. 
Since exactly one reticulation arc is used per reticulation omnian in the above, there are $r+1$ reticulation arcs that can be used to construct tree vertex omnians. Each tree vertex omnian requires two reticulation arcs, meaning that we can have a maximum of $k= \floor{\frac{r+1}{2}}$ tree vertex omnians. 
We can realize this maximum by adding tree vertices~$u_1,\ldots, u_k$, and adding arcs $u_1v_1, u_1v_2, u_2v_1, u_2v_3$, and arcs $u_iv_{2i-2}, u_iv_{2i-1}$ for $3\le i\le k$.
Connect all vertices whose degree requirements are not yet met arbitrarily, taking care that the network remains acyclic (by adding a root, tree vertices, and labelled leaves).  
Such a network has~$r-1+k = r-1+\floor{\frac{r+1}{2}} = \floor{\frac{3}{2}r - \frac{1}{2}}$ omnians.
\end{proof}

\begin{figure}
    \centering
    \includegraphics[height=5cm]{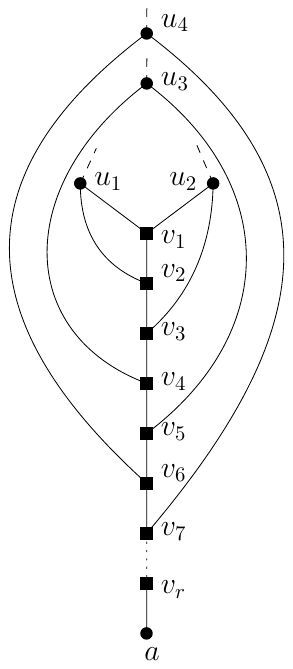}
    \caption{A network with~$r$ reticulations which contains~$\floor{\frac{3}{2}r - \frac{1}{2}}$ omnians. See the final paragraph of \Cref{subsec:LTC} for the explanatory text.}
    \label{fig:TightL_TC}
\end{figure}

\subsection{Orchard Networks}
Interestingly, computing~$L_\cOr(N)$ proves to be a difficult problem, although the leaf addition proximity measure is easy to compute for its neighbouring network classes. We prove the following result in~\Cref{sec:Hardness}.
\begin{thmn}[\ref{thm:L_Or=Hard}]
	Let~$N$ be a network. Computing $L_\cOr(N)$ is NP-hard. 
\end{thmn}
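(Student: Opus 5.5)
We will prove NP-hardness by a polynomial-time reduction from \dpVC{}. The instance is a cubic graph $G=(V,E)$ and an integer $k$; the question is whether $G$ has a vertex cover of size at most $k$. From $G$ we construct a network $N_G$ such that $L_\cOr(N_G)\le k + c|E|$ (or $k$ plus some fixed offset depending only on $|V|,|E|$) if and only if $G$ has a vertex cover of size at most $k$. For the equivalence we will use the cherry-picking definition of orchard networks, together with the fact that the order of reductions does not matter. For the lower bounds we will also use the characterization via HGT-consistent labellings (\Cref{thm:OrchIFFHori}), or equivalently acyclic cherry covers (\Cref{thm:AcyclicCherryCover}).

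\textbf{Construction.} For each vertex $v\in V$ we build a vertex gadget $\Gad(v)$, modelled on the non-orchard network of \Cref{fig:example_Lor_Aorinf}. It has exactly three ``ports'', one per incident edge, and two properties. First, adding a single leaf at a designated arc makes it fully reducible and releases all three ports. Second, without such a leaf, the gadget can only be reduced once each port has been freed from outside. For each edge $uw\in E$ we build an edge gadget that links a port of $\Gad(u)$ with a port of $\Gad(w)$ by reticulation arcs, in a stack-like arrangement. Once one endpoint's gadget has been reduced, a reticulated cherry appears that frees the other side. If neither endpoint is reduced, the edge gadget can only be resolved by adding an extra leaf, and more precisely it then costs strictly more than resolving it through a cheap vertex gadget. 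All gadgets hang below a caterpillar-like tree backbone, so that the root part reduces trivially once everything below is reduced. We choose gadget sizes so that every leaf addition can be charged to a single gadget. This is plausible because, as noted after \Cref{def:leaf_add}, a leaf added to one arc can create at most one (reticulated) cherry.

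\textbf{Correctness, forward direction.} Given a vertex cover $C$, we add one leaf to the designated arc of $\Gad(v)$ for every $v\in C$. We then exhibit an explicit cherry-picking sequence in three phases: first reduce the gadgets $\Gad(v)$ with $v\in C$; then reduce every edge gadget, each of which has a covered endpoint; finally reduce the gadgets $\Gad(v)$ with $v\notin C$, whose three ports have now all been freed. The backbone is reduced last.

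\textbf{Correctness, reverse direction (main obstacle).} Suppose a set $S$ of added leaves makes $N_G$ orchard. We must normalize $S$: show that any leaf placed elsewhere can be moved to a designated arc of some vertex gadget without increasing $|S|$ and while keeping the network orchard. The first step is a local lemma: in an orchard network every reticulation needs a reduction path, so by the order-independence of cherry picking we can trace the first (reticulated) cherry to appear inside each gadget. This shows that each gadget either receives a leaf or depends on a neighbour. The second step is an exchange argument: a leaf placed inside an edge gadget $uw$ is replaced by a leaf at $\Gad(u)$. The hardest part is ruling out "cyclic dependencies", in which gadgets free each other without any leaf. For this we argue on the auxiliary graph of an acyclic cherry cover (\Cref{thm:AcyclicCherryCover}): the shapes across an uncovered edge would have to point in both directions, which gives a cycle. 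Once $S$ is normalized, the vertices whose gadgets carry leaves form a vertex cover of size at most $|S|$. Finally, the construction has size polynomial in $|V|$, which completes the reduction.
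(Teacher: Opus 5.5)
Your overall strategy (a reduction from \dpVC{}, one added leaf per cover vertex, and a cycle in the cherry-cover auxiliary graph for uncovered edges) is the same as the paper's. As written, however, it is a plan rather than a proof, and the reverse direction has a real gap. The gadgets are never specified. So neither the two asserted vertex-gadget properties nor the behaviour of the ``stack-like'' edge gadgets can be checked, and you even leave open whether the threshold is $k$ or $k+c|E|$.

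More importantly, the two tools you propose for the reverse direction do not work as stated. The charging claim misreads the remark after \Cref{def:leaf_add}: a single added leaf can trigger a cascade of reductions through many gadgets. Your own forward direction relies on exactly this, so leaves cannot be charged to gadgets locally. The normalization step (moving any leaf to a designated arc while keeping the network orchard) comes with no mechanism, and there is no general principle that relocating a leaf preserves orchardness. Likewise, ``tracing the first cherry that appears in each gadget'' does not control what happens after cascades. So ``depends on a neighbour'' is never turned into a contradiction.

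What is missing is a leaf-independent obstruction: for each edge $uv$, a proof that the network is non-orchard whenever no added leaf lies on a specific part of $\Gad(u)$ or $\Gad(v)$, wherever the other leaves are. Your final sentence gestures at this. But cherry covers are not unique, so ``the shapes across an uncovered edge point in both directions'' requires those shapes to be forced in every cherry cover.

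The paper obtains this forcing from N-fences:
\begin{itemize}
\item In a binary network, each reticulation has exactly one incoming arc used as a middle arc. Since the first arc of an N-fence has a reticulation tail, the middle arcs must be $a_2,a_4,\dots$, which fixes every reticulated cherry shape on the fence (\Cref{lem:n_fence_unique}).
\item Directed paths in the network between such shapes yield directed paths in the auxiliary graph. Hence two N-fences connected crosswise (from late in each fence to early in the other) force a cycle in every cherry cover (\Cref{lem:n_fence_cycle}).
\item Leaves added off the two principal parts leave both N-fences and the connecting directed paths intact. So by \Cref{thm:AcyclicCherryCover}, every edge of $G$ must have a leaf on the principal part of one endpoint's gadget (\Cref{lem:LORDimpliesVC}).
\end{itemize}
The vertices whose principal part carries a leaf then directly form a vertex cover of size at most $|S|$, with no normalization or exchange argument needed. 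The forward direction still needs an explicit check for a concrete gadget. The paper adds a leaf to $w_4^vr_4^v$ and exhibits an HGT-consistent labelling (\Cref{thm:OrchIFFHori}).
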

We also include the following theorem which states that when considering leaf addition proximity measures for orchard networks, it suffices to consider leaf additions to reticulation arcs. We shall henceforth assume that all leaf additions are on reticulation arcs.
\begin{theorem}[Theorem 4.1 of \cite{susanna2022making}]\label{thm:OnlyAddToRetArcs}
    A network~$N$ is orchard if and only if the network obtained by adding a leaf to a tree arc of~$N$ is orchard.
\end{theorem}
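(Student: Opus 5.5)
The plan is to work entirely with the labelling characterisation of \Cref{thm:OrchIFFHori}: a network is orchard if and only if it admits an HGT-consistent labelling. Let $e=uv$ be a tree arc of $N$, so $v$ is not a reticulation. Let $N'=N+(e,x)$, where $w$ is the new vertex subdividing $uv$ and $wx$ is the new pendant arc. The only arcs that differ between $N$ and $N'$ are $uv$ in $N$ versus $uw,wv,wx$ in $N'$. None of these arcs has a reticulation as its head: $v$ is not a reticulation by assumption, and $w$ and $x$ are a tree vertex and a leaf. I will transfer labellings in both directions and check the three non-temporal conditions together with HGT-consistency only locally around $u,w,v,x$.

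($\Rightarrow$) Let $t$ be an HGT-consistent labelling of $N$. Since $uv$ is a tree arc, it cannot be horizontal, so $t(u)<t(v)$. I define $t'$ on $N'$ by setting $t'=t$ on $V(N)$, choosing $t(u)<t'(w)<t(v)$, and setting $t'(x)=t'(w)+1$.
\begin{itemize}
\item The arcs $uw,wv,wx$ are all strictly increasing, and $w$ has a child with a larger label.
\item If the child of $u$ witnessing the second condition in $N$ was $v$, then in $N'$ the child $w$ serves the same role.
\item Every reticulation keeps exactly the same incoming arcs with the same labels, so it still has exactly one incoming horizontal arc.
\end{itemize}
Hence $t'$ is HGT-consistent and $N'$ is orchard.

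($\Leftarrow$) Let $t'$ be an HGT-consistent labelling of $N'$, and let $t$ be its restriction to $V(N)$.
\begin{itemize}
\item Since $w$ is a tree vertex, the arc $uw$ is a tree arc, so $t'(u)<t'(w)$. Since $v$ is not a reticulation, $t'(w)<t'(v)$. Therefore $t(u)<t(v)$, and the restored arc $uv$ is a valid vertical tree arc.
\item If $u$'s strictly larger child in $N'$ was $w$, then $v$ now plays that role in $N$. All other internal vertices keep the same children.
\item The incoming arcs of every reticulation are unchanged. This holds even when $u$ is a reticulation, since only its outgoing arc changes.
\end{itemize}
So $t$ is HGT-consistent and $N$ is orchard.

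The main point needing care is the degenerate case where $u$ is the root or $v$ is a leaf. Neither affects the argument, because the conditions only involve arcs into reticulations and internal vertices' children. I would also double-check that the root arc (which is a tree arc) is allowed as $e$; the same argument applies to it unchanged.
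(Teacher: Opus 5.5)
Your proof is correct and complete. The paper gives no argument of its own for this statement; it imports it from \cite{susanna2022making}. So there is no paper proof to match, and your argument via \Cref{thm:OrchIFFHori} stands as a self-contained justification.

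You rely on the right facts. Since $v$ has the single parent $u$ and is not a reticulation, the set of reticulations and the incoming arcs of every reticulation are literally the same in $N$ and $N+(e,x)$. Hence HGT-consistency transfers verbatim. The only things left to check are strictness on $uw,wv,wx$ (respectively on the restored arc $uv$) and the witness-child condition at $u$, and you handle both in each direction.

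Your argument also shows where the hypothesis is genuinely used in the reverse direction. If $v$ were a reticulation, $wv$ could be the horizontal arc into $v$, and restoring $uv$ would leave $v$ with no horizontal incoming arc. This is consistent with the fact that leaves added to reticulation arcs can turn non-orchard networks into orchard ones.

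For comparison, the other route available in this paper goes through \Cref{thm:AcyclicCherryCover}. In any cherry cover of $N+(e,x)$, the arcs $wv,wx$ are forced to form a cherry shape: because $v$ is not a reticulation, $wx$ cannot lie in a reticulated cherry shape. The shape covering $uw$ then has $w$ as an endpoint. Consequently, cherry covers of $N$ and $N+(e,x)$ are in bijection. The auxiliary graph changes only by inserting one new node between the shape covering $uv$ and the shapes having $v$ as an internal vertex, so acyclicity is preserved in both directions. That proof needs the forcing argument, some bookkeeping on auxiliary graphs, and separate care when $e$ is the (uncovered) root arc. Your labelling argument instead reduces everything to a purely local inequality check.
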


We now provide a sharp upper bound for~$L_{\cOr}(N)$.
We call a reticulation \emph{highest} if it has no reticulation ancestors.

\begin{lemma}
\label{lem:RetHasLeafSibling}
    Let~$N$ be a network. Suppose there is a highest reticulation~$r$ such that all other reticulations have a leaf sibling. Then~$N$ is orchard.
\end{lemma}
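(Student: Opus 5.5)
I would prove the lemma by induction on the number of reticulations, reducing reticulated cherries one at a time; alternatively, I would build an HGT-consistent labelling and appeal to \Cref{thm:OrchIFFHori}. I plan the direct cherry-picking route.

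First, the base case. If $r$ is the only reticulation, then $N$ has one reticulation. The statement for this case would follow if networks with at most one reticulation are orchard. They are tree-child by the argument in the proof of \Cref{thm:L_TCBound}, and hence orchard by \Cref{lem:NetworkClassContainment}.

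For the inductive step, suppose $N$ has at least two reticulations. I would pick a reticulation $r'\neq r$ that is lowest among the reticulations other than $r$, or simply any $r'\neq r$. By hypothesis, $r'$ has a leaf sibling $y$. So some parent $p$ of $r'$ has children $r'$ and $y$, and $p$ is a tree vertex. The child $c$ of $r'$ is the point that needs care. If $c$ is a leaf $x$, then $(x,y)$ is a reticulated cherry. Reducing it deletes the arc $pr'$ and cleans up; this suppresses $p$ and $r'$ and turns $x$ into a child of the other parent of $r'$. The resulting network $N'$ has one fewer reticulation. Since the order of reductions does not matter~\cite{janssen2021cherry}, it suffices to show $N'$ is orchard. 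To reach a leaf child, I would choose $r'$ to be a lowest reticulation of $N$ other than $r$, and then argue that the subnetwork below $r'$ consists only of tree vertices and leaves, except possibly for $r$ itself. Since $r$ is highest, $r$ is not a descendant of $r'$. So below $r'$ there is a tree, and it contains a cherry; reducing the cherries in that pendant tree collapses it until $c$ is a leaf. These cherry reductions remove only leaves in the pendant subtree. They do not delete $y$, and they do not change which vertices are reticulations, except that a deleted leaf could be the leaf sibling of nothing relevant.

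The remaining check is that the hypothesis survives in $N'$, so that induction applies. The reticulation $r$ is still highest, because deleting arcs creates no new ancestry. Every other reticulation $r''$ still has a leaf sibling. This could fail in two ways. First, the sibling of $r''$ could have been the leaf $x$ or a leaf removed from under $r'$; but such leaves are descendants of $r'$, and a sibling of $r''$ sharing its parent lies below $r'$ only if $r''$ lies below $r'$, which contradicts lowestness. Second, the parent relation of $r''$ could have changed through suppression of $p$ or $r'$. When $p$ is suppressed, its parent acquires child $y$ in place of $p$, so siblings of $r''$ are unaffected. If $r''$ was a child of $r'$, it would lie below $r'$, which is impossible. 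The main obstacle is this bookkeeping: reductions and clean-up must not destroy a leaf-sibling relation, and in particular I must check the case where $y$ is itself the leaf sibling of another reticulation through the same parent $p$. That case is impossible, because $p$ has only two children.
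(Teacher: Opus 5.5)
Your proposal is correct and follows essentially the same route as the paper: induction on the number of reticulations, a base case via one-reticulation networks being tree-child (hence orchard), choosing a lowest reticulation $r'\neq r$, collapsing the pendant tree below $r'$ by cherry reductions until its child is a leaf $x$, and then reducing the reticulated cherry $(x,y)$ with $r'$'s leaf sibling $y$. Your check that the hypothesis survives in $N'$ is more careful than the paper's, which simply asserts it. One small correction there: when $r''$ is a sibling of $p$ or of $r'$, its sibling does change (to the leaf $y$ or the leaf $x$ respectively), but this only adds a leaf sibling and so does not affect the argument.
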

\begin{proof}
    We prove the lemma by induction on the number of reticulations~$k$. For the base case, observe that a network with one reticulation is tree-child since it has no omnians. A tree-child network is orchard~\cite{janssen2021cherry}, and so this network must be orchard.

    Suppose now that we have proven the lemma for all networks with fewer than~$k$ reticulations, where~$k>1$. 
    Let~$N$ be a network with reticulation set~$R$ where~$|R| = k$, and suppose there exists a highest reticulation~$r$ in~$N$ such that all other reticulations have a leaf sibling.
    Let~$r$ denote the highest reticulation as specified in the statement of the lemma.
    Choose a lowest reticulation~$r'\in R\setminus \{r\}$. By assumption,~$r'$ has a leaf sibling~$c$.
    Every vertex below~$r'$ must be tree vertices and leaves. Reduce cherries until the child~$x$ of~$r'$ is a leaf. 
    Then~$(x,c)$ is a reticulated cherry; the network~$N'$ obtained by reducing this reticulated cherry has $k-1$ reticulations and has a highest reticulation~$r$ such that all other reticulations have a leaf sibling.
    By induction hypothesis,~$N'$ must be orchard. Since a sequence of cherry reductions can be applied to~$N$ to obtain~$N'$, the network~$N$ must also be orchard.
\end{proof}

\begin{theorem}
\label{thm:LORUpperBound}
    Let $N$ be a network, and let~$r$ denote the number of reticulations. Then $L_\cOr(N)=0$ if~$N$ is a tree, and otherwise, $L_{\cOr}(N) \leq r - 1$, where the bound is sharp.
\end{theorem}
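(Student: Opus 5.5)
The upper bound follows from \Cref{lem:RetHasLeafSibling}. If $N$ is a tree it is already orchard, so $L_\cOr(N)=0$. Otherwise fix a highest reticulation $r_0$; one exists because $N$ is acyclic and has at least one reticulation. For every other reticulation $r'\neq r_0$, pick one of its incoming arcs $pr'$ and add a fresh leaf $z_{r'}$ to it. This subdivides $pr'$ by a new tree vertex $w$ whose children are $r'$ and $z_{r'}$, so $r'$ now has the leaf sibling $z_{r'}$.

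Leaf additions create no new reticulations and keep existing reticulations as reticulations. Each addition only subdivides one arc, so later additions do not destroy the siblings created earlier: each new leaf sits below its own new subdivision vertex. Ancestry among reticulations is also unchanged, so $r_0$ is still a highest reticulation. Hence the resulting network satisfies the hypothesis of \Cref{lem:RetHasLeafSibling}, is therefore orchard, and was obtained with $r-1$ leaf additions. This gives $L_\cOr(N)\le r-1$.

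For sharpness, for each $r\ge1$ I would construct a network $N_r$ with $L_\cOr(N_r)=r-1$. For $r=1$ any network with one reticulation works, since it is tree-child and hence orchard. For general $r$, I would generalise $N_2$ from \Cref{fig:Lor_Aor_star_incomp}, but the stated counts there give $L_\cOr(N_2)=2$ with two leaves, which would contradict the bound if $N_2$ had only $2$ reticulations. So $N_2$ presumably has $3$ reticulations, and I would try to extract from it a family. A natural candidate is a ``stack'' of reticulations $v_1,\dots,v_r$ on a single leaf, where each $v_i$ has $v_{i+1}$ as its child and the other parents of the $v_i$ are arranged so that no cherry or reticulated cherry exists. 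Each added leaf can create at most one new reticulated cherry, and reducing that cherry removes one reticulation but exposes no further reducible structure unless another leaf is added.

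The lower-bound argument is the main obstacle. I would try an invariant argument: by \Cref{thm:OnlyAddToRetArcs} we may assume every added leaf lies on a reticulation arc. In the reduction of $N_r$ plus $k$ added leaves, I would show that every reticulated-cherry reduction except possibly one uses an added leaf as the ``$y$'' partner, or that each reticulation except one needs a distinct added leaf adjacent to one of its incoming arcs. A counting argument over the cherry cover given by \Cref{thm:AcyclicCherryCover} might help here. In an acyclic cherry cover, each reticulation must have its middle arc in some reticulated cherry shape whose other endpoint is a tree path to a leaf. The construction should ensure that such endpoints can only be supplied by added leaves for all but one reticulation, which forces $k\ge r-1$.
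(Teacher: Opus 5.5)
Your upper bound is correct and is the paper's argument: fix a highest reticulation, add a leaf to one incoming arc of every other reticulation, and apply \Cref{lem:RetHasLeafSibling}. The gap is sharpness, which is part of the statement. You never fix a concrete family $N_r$, and you never prove $L_\cOr(N_r)\ge r-1$ for any $r\ge 2$. The property you want your stack to have is only asserted: each added leaf yields at most one reticulated cherry, and reducing it exposes nothing further. It also does not follow from the absence of initial (reticulated) cherries. For instance, $N_1$ in \Cref{fig:Lor_Aor_star_incomp} has no (reticulated) cherries and eight reticulations, yet one added leaf makes it orchard: that leaf is reused as a partner, and each reduction exposes new reticulated cherries. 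So the lower bound must come from a construction that forces, for every reticulation but one, the partner leaf onto a specific pair of arcs. A vague cherry-cover count will not give you this.

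The paper's family does this. Take a reticulation path $v_1\cdots v_r$, a tree path $u_1\cdots u_r$, leaves $a,b$, and arcs $\rho u_1,\rho v_1,u_rv_1,u_ra,v_rb$ and $u_iv_{i+1}$ for $i\in[r-1]$. The lower bound can then be completed in three steps.

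(i) A reticulation disappears only as the vertex $p_x$ of a reticulated-cherry reduction. Moreover, $v_i$ acquires a leaf child only after $v_{i+1}$ is gone. So the $v_i$ are reduced in the order $v_r,\dots,v_1$.

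(ii) Hence, when $v_{i+1}$ with $i\ge1$ is reduced, $v_i$ and $v_1$ still exist. On the chain side, the parent of $v_{i+1}$ is the reticulation $v_i$ unless a leaf was added to $v_iv_{i+1}$. On the other side, $u_i$ has not been suppressed, and its other child is a proper ancestor of $v_1$: the tree arcs $u_ju_{j+1}$ are never deleted, and $u_rv_1$ is deleted only when $v_1$ is reduced. So $u_i$ cannot supply the partner leaf unless a leaf was added to $u_iv_{i+1}$.

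(iii) Therefore each of the $r-1$ pairwise disjoint pairs $\{v_iv_{i+1},u_iv_{i+1}\}$ must receive an added leaf, giving $L_\cOr(N_r)\ge r-1$. Only $v_1$ is reduced for free, with partner $a$. Leaves on $v_iv_{i+1}$ work as well as leaves on $u_iv_{i+1}$, so it is the count that is forced, not the particular arcs.
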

\begin{proof}
    If~$N$ is a tree, then it is orchard, and so~$L_\cOr(N) = 0$.
    So suppose~$r>0$. Let $r$ be a highest reticulation of~$N$, and for every other reticulation, arbitrarily choose one incoming reticulation arc.
    Add a leaf to each of these reticulation arcs. By \Cref{lem:RetHasLeafSibling}, the resulting network must be orchard.
    We have added a leaf for all but one reticulation in~$N$. It follows that~$L_\cOr(N) \le r - 1$.
    
    To show that the bound is sharp, consider a path~$v_1\ldots v_r$ of reticulations (see the network in \Cref{fig:Lor_bound} for an example when~$r=5$).
    Let~$\rho$ denote the root, let~$u_1\ldots u_r$ be a path of tree vertices, and let~$a,b$ be two leaves.
    Add the arcs~$\rho u_1, \rho v_1$ and also the arcs~$u_i v_{i+1}$ for~$i\in [r-1]$. 
    Finally add arcs~$u_r v_1, u_r a,$ and~$v_rb$.
    Observe that the only way to reduce the graph using cherry-picking sequences is to add leaves to the arcs $u_i v_{i+1}$ for~$i\in [r-1]$.
\end{proof}

\begin{figure}
    \centering
    \includegraphics[width=0.22\columnwidth]{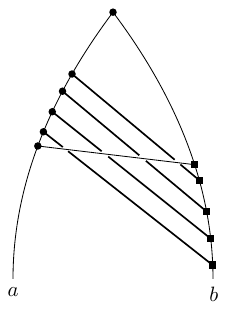}
    \caption{A network~$N$ on two leaves~$\{a,b\}$ with~$r = 5$ reticulations. Observe that $L_{\cOr}(N) = r - 1 = 4$, since the highest reticulation cannot be reduced by cherry picking unless the reticulations below it are first reduced. For each non-highest reticulation, we must add a leaf to one of its incoming arcs to reduce it, which leads to $L_{\cOr}(N) = 4$. Note that this construction can be extended for any number of reticulations. 
    }
    \label{fig:Lor_bound}
\end{figure}

\subsection{Tree-Based Networks}

Similarly to tree-child networks where $L_\cTC$ is equivalent to \textit{omnians}, tree-based networks are also characterized by a forbidden substructure, namely, the W-fences (\Cref{lem:TB=NoW}). We first reiterate this equivalence in \Cref{lem:L_TB=W-fences} and its complexity in \Cref{thm:L_TB=polynomial}, and subsequently derive its bound in \Cref{thm:L_TBBound}.

It has been shown already that~$L_{\cTB}(N)$ can be computed in~$O(|N|^{3/2})$ time 
where~$|N|$ is the number of vertices in~$N$~\cite{francis2018new}. 
This was shown to be solvable in $O(|N|)$ time by adding a leaf to every W-fence~\cite[Corollary 5.4]{hayamizu2021structure}.
We include the proof here for completeness.

\begin{lemma} \label{lem:L_TB=W-fences}
    Let $N$ be a network. Then $L_{\cTB}(N)$ is equal to the number of W-fences. Moreover, $N$ can be made tree-based by adding a leaf to any arc in each W-fence in $N$.
\end{lemma}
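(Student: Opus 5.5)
We prove the two bounds separately. The lower bound shows that one leaf addition removes at most one W-fence and creates none. The upper bound shows that adding a leaf to any single arc of a W-fence destroys that W-fence and leaves all other maximal zig-zag trails untouched. Tree-basedness then follows from \Cref{lem:TB=NoW}, and uniqueness of the decomposition comes from \Cref{thm:UniqueMaxZigZag}.

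\emph{Effect of one leaf addition.} Let $W=(a_1,\ldots,a_k)$ be the maximal zig-zag trail containing $e=uv$, and add a leaf $x$ to $e$ via a new vertex $w$. In $N+(e,x)$ the arc $e$ is replaced by three arcs $uw$, $wv$, $wx$.

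\begin{itemize}
\item Since $w$ is a tree vertex with $v$ and $x$ as children, $\{wv, wx\}$ forms a zig-zag trail through the shared tail $w$.
\item Because $x$ is a leaf, $wx$ shares neither its tail with any arc other than $wv$, nor its head with any arc. So $wx$ terminates that trail on one side.
\item The arc $uw$ has head $w$ of indegree one, so it shares no head. It continues only through its tail $u$, which is a tree vertex or a reticulation, as $e$ did before.
\end{itemize}

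Hence $W$ splits into two trails: the part of $W$ on the $u$-side ending in $uw$, and the part on the $v$-side extended by $wv, wx$. Every other maximal zig-zag trail of $N$ keeps its arcs and endpoint types, since no vertex other than those on $e$ changes its in- or outdegree. So the decomposition of $N+(e,x)$ is obtained from that of $N$ by replacing the single trail $W$ by these two pieces.

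\emph{Lower bound.} We check that neither new piece is a W-fence unless $W$ was already one.

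\begin{itemize}
\item The piece ending in $wx$ has last tail $w$, a tree vertex. Every W-fence has both end tails reticulations, so this piece is not a W-fence.
\item The piece ending in $uw$ has one end at $uw$ with tail $u$, and its other end is an end of $W$. If it were a W-fence, its two end tails would both be reticulations. One of these is the original end of $W$. The other end arc $uw$ ends at $w$ with indegree one; for this to be a terminating end of a fence, $u$ would have to be the tail of a genuine end of $W$ in $N$.
\end{itemize}

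The main work is the case analysis here. Rather than fight the edge cases, the cleaner route is a counting argument on the pieces. Each of the two pieces has one end at $w$, where the tail is a tree vertex ($w$ itself, or $u$ as the shared-tail side). So each piece has at least one end tail that is a tree vertex, and neither piece can be a W-fence. Crowns are cycles, so cutting a crown yields fences, again with a tree-vertex end at $w$.

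Thus one leaf addition reduces the number of W-fences by at most one: by exactly one if $e$ lies in a W-fence, and by zero otherwise. It never creates a new W-fence. Since tree-based networks have zero W-fences by \Cref{lem:TB=NoW}, $L_{\cTB}(N)$ is at least the number of W-fences.

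\emph{Upper bound.} Adding one leaf to an arbitrary arc of each W-fence removes that W-fence, and by the locality above these additions do not interfere. The arcs chosen lie in distinct trails, and a subdivided arc only affects its own trail. The resulting network therefore has no W-fences and is tree-based by \Cref{lem:TB=NoW}, which gives equality.

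The step needing the most care is verifying the split of $W$ at $w$, in particular that the $u$-side piece correctly terminates at $uw$. We handle it by checking the degrees of $w$ directly.
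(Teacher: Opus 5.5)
Your proof is correct and is essentially the paper's argument. Both rest on the fact that subdividing an arc changes only the maximal zig-zag trail containing it, and that a W-fence is split into pieces none of which is a W-fence; the paper does this explicitly for $a_1$, obtaining the N-fences $(u_1p_x)$ and $(a_k,\ldots,a_2,p_xv_1,p_xx)$, then notes that any arc works, and both bounds follow via \Cref{lem:TB=NoW}.

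One small slip: when $u$ is a reticulation (exactly the paper's case $e=a_1$), the $u$-side piece is the single arc $(uw)$, whose tail is a reticulation. So your claim that each piece has a tree-vertex end tail fails there. That piece is still not a W-fence, because it has odd length $1$. The invariant that works in all cases is that each new piece ends at a non-reticulation head, namely $w$ or $x$.
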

\begin{proof}
    By \Cref{lem:TB=NoW}, a network is tree-based if and only if it 
	contains no W-fences. We show that every leaf addition can result in a 
	network with one W-fence fewer than that of the original network.
	Suppose that~$N$ contains at least one 
	W-fence. 
	Otherwise we may conclude that the network is tree-based by 
	\Cref{lem:TB=NoW}. 
	Let~$(a_1,a_2,\ldots, a_k)$ be a W-fence in~$N$ where $a_i = u_iv_i$ 
	for~$i\in[k]$, and add a leaf~$x$ to~$a_1$; let~$p_x$ be the tree vertex 
	parent of~$x$. In the resulting network, the arcs 
	in~$\{u_1p_x,p_xv_1,p_xx, a_2, a_3,a_4,\ldots, a_k\}$ are decomposed into 
	their unique maximal zig-zag trails (\Cref{thm:UniqueMaxZigZag}) as two 
	N-fences~$(u_1p_x)$ and~$(a_k,a_{k-1},\ldots,a_3,a_2,p_xv_1,p_xx)$. All 
	other arcs remain in the same maximal zig-zag trails as in~$N$. 
	Therefore the number of W-fences has gone down by exactly one. This can be 
	repeated for all W-fences in the network; it follows that~$L_{\cTB}(N)$ is 
	the number of W-fences in~$N$.

    A quick check shows that adding a leaf to any arc in the W-fence decomposes the W-fence into two N-fences.
\end{proof}

\begin{theorem}\label{thm:L_TB=polynomial}
	Let~$N$ be a network. Then $L_\cTB(N)$ can be computed in~$O(|N|)$ time. 
\end{theorem}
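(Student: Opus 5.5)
By \Cref{lem:L_TB=W-fences}, $L_\cTB(N)$ equals the number of W-fences of~$N$. It therefore suffices to compute the unique zig-zag decomposition of \Cref{thm:UniqueMaxZigZag} in $O(|N|)$ time and count the trails that are W-fences. Since~$N$ is binary, $|A(N)| = O(|N|)$ and every vertex has total degree at most~$3$. Hence each arc has at most one other arc sharing its tail (its sibling arc) and at most one other arc sharing its head (the other incoming arc of a reticulation).

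The plan is to build an auxiliary graph~$H$ whose vertices are the non-root arcs of~$N$. Two arcs are adjacent in~$H$ when they share a tail (the two outgoing arcs of a tree vertex) or share a head (the two incoming arcs of a reticulation). Every arc has at most one partner of each kind, so~$H$ has maximum degree~$2$. Its connected components are therefore paths or cycles, and these are exactly the maximal zig-zag trails: cycles correspond to crowns, and paths to fences.

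We construct~$H$ in linear time by scanning all vertices once, and find its components with a single traversal in $O(|H|) = O(|N|)$ time. For each path component we look at its two end arcs $a_1$ and $a_k$ and check, in constant time each, whether $\tail(a_1)$ and $\tail(a_k)$ are reticulations. The component is a W-fence exactly when both are. Counting these gives $L_\cTB(N)$, and \Cref{lem:L_TB=W-fences} also tells us where to add the leaves.

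The main point needing care is justifying that the components of~$H$ coincide with the maximal zig-zag trails of the unique decomposition. This requires that the alternation condition in the definition (consecutive arcs share a tail or a head) is automatically respected along a path of~$H$. It is: two consecutive adjacencies of the same type would give some arc two distinct partners of that type, which the degree bounds rule out. Maximality is immediate because components cannot be extended, so the classification by end-tails applies directly.
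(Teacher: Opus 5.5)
Your route is the paper's: by \Cref{lem:L_TB=W-fences} it suffices to compute the zig-zag decomposition in linear time and count its W-fences. The paper simply cites Proposition~5.1 of \cite{hayamizu2021structure} for the linear-time decomposition. You instead prove it directly via the arc-adjacency graph $H$. Your argument is sound: $H$ has maximum degree~$2$, alternation along a path of $H$ is forced, and so the components of $H$ are exactly the maximal zig-zag trails. This makes the proof self-contained.

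There is one concrete slip in the classification step. A component of $H$ can consist of a single arc, $a_1=a_k$. This happens exactly for an arc from a reticulation to a tree vertex or leaf, for example the outgoing arc of any lowest reticulation, because such an arc has neither a tail partner nor a head partner. Its ``two'' end tails coincide and are a reticulation. Your test ``W-fence exactly when both end tails are reticulations'' would therefore count it, although it is an N-fence of length~$1$; the paper uses such N-fences, e.g.\ $(u_1p_x)$ in the proof of \Cref{lem:L_TB=W-fences}. As written, the algorithm overcounts on every network with at least one reticulation.

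The fix is to add the condition $k\ge 2$. For $k\ge 2$ your test is correct. A reticulation end tail means the end arc has no tail partner, so the end adjacency is head-sharing. Both end adjacencies having the same type then forces $k$ to be even, which matches the ``$k\ge 2$ even'' clause in the paper's definition of W-fences.
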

\begin{proof}
   Finding the maximal zig-zag decomposition takes $O(|N|)$ time (Proposition 5.1 of \cite{hayamizu2021structure}). Counting the number of W-fences in the decomposition gives~$L_\cTB(N)$ by \Cref{lem:L_TB=W-fences}.
\end{proof}

\begin{figure}[ht]
    \centering
    \includegraphics{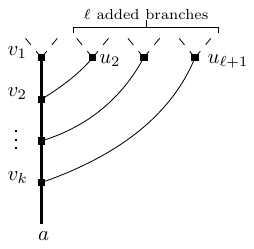}
    \caption{\ym{Construction used in proof of \Cref{thm:L_TBBound} to show tightness of bound.
    A network~$N$ with an odd number $r = k+\ell$ of reticulations, where
    $L_{\cTB}(N) = \left\lfloor \frac{r-1}{2} \right\rfloor$.}}
    \label{fig:Ltb_bound}
\end{figure}

We will now derive a sharp upper bound to $L_\cTB$, which is based on bounding the number of possible W-fences in a network.
\begin{theorem}\label{thm:L_TBBound}
    Let $N$ be a network with~$r\ge1$. Then $L_{\cTB}(N) \leq \left\lfloor \frac{r-1}{2} \right\rfloor$.
    \ym{This bound is tight.}
\end{theorem}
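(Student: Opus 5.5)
By \Cref{lem:L_TB=W-fences}, $L_\cTB(N)$ equals the number $w$ of W-fences of $N$. The bound therefore reduces to the purely structural inequality $2w \le r-1$. I would prove this by charging each W-fence to two reticulations whose unique child is itself a reticulation.

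\textbf{Structure of a W-fence.} Let $(a_1,\ldots,a_k)$ be a W-fence with $k\ge 2$ even, where $\tail(a_1)$ and $\tail(a_k)$ are reticulations.
\begin{itemize}
\item A reticulation has outdegree $1$. Hence $a_1$ cannot share its tail with $a_2$, so $\head(a_1)=\head(a_2)$.
\item A vertex with two incoming arcs is a reticulation, so $\head(a_1)$ is a reticulation. Symmetrically, $\head(a_k)$ is a reticulation.
\item Consequently $\tail(a_1)$ and $\tail(a_k)$ are reticulations whose unique child is a reticulation.
\item These two tails are distinct. A reticulation has a single outgoing arc, and $a_1\neq a_k$ because $k\ge 2$ and the arcs of a trail are distinct.
\end{itemize}

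\textbf{Counting.} By \Cref{thm:UniqueMaxZigZag}, the maximal zig-zag trails partition the arcs. So the unique outgoing arc of a reticulation lies in exactly one trail, and it can serve as the end arc $a_1$ or $a_k$ of at most one W-fence. The $2w$ reticulations obtained above are therefore pairwise distinct. Each of them has a reticulation child. A lowest reticulation (one with no reticulation descendant, which exists by acyclicity since $r\ge1$) has a non-reticulation child. Hence at most $r-1$ reticulations have a reticulation child, which gives $2w\le r-1$ and $L_\cTB(N)=w\le\lfloor (r-1)/2\rfloor$.

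\textbf{Tightness.} For $r=2w+1$, build a stack of $w$ length-$2$ W-fences.
\begin{itemize}
\item Take reticulations $x_1,y_1,\ldots,y_w,z_1,\ldots,z_w$ and set $x_{i+1}:=z_i$.
\item For each $i$, add the arcs $x_iz_i$ and $y_iz_i$. This gives $1+w+w=2w+1$ reticulations.
\item The pair $(x_iz_i,\,y_iz_i)$ is a maximal zig-zag trail: both tails are reticulations of outdegree $1$, and the head $z_i$ has indegree $2$. So it is a W-fence.
\item Give the reticulations $x_1,y_1,\ldots,y_w$ their two parents from fresh tree vertices hanging off a tree backbone. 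Choose these so that no new W-fences appear and the graph stays acyclic, e.g.\ each such parent also has a leaf child, as in \Cref{fig:Ltb_bound}.
\item Hang a leaf below $z_w$.
\end{itemize}

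For even $r$, add one further reticulation whose two parents are tree vertices with leaf children. This creates no extra W-fence, so the value $\lfloor (r-1)/2\rfloor$ is still attained.

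The main thing to verify is that the filler arcs form only M- or N-fences. That follows because every added tree vertex has a leaf child, so none of these trails can have a reticulation tail at both ends.
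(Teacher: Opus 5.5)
Your proposal is correct and follows essentially the same route as the paper. Both arguments charge each W-fence to its two distinct reticulation endpoints, which cannot be lowest reticulations (you make this explicit by noting that their child is a reticulation), and both use the same stack of length-2 W-fences on a reticulation path for tightness; for even $r$ you add a separate reticulation, whereas the paper lengthens the path, and your final check on the filler arcs is unnecessary because the $w$ stacked W-fences are forced however the rest is completed and the upper bound already caps the count.
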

\begin{proof}
    By \Cref{lem:L_TB=W-fences}, we have that $L_{\cTB}(N)$ is equal to the number of W-fences in $N$. Each W-fence starts and ends with a reticulation arc whose tails are also reticulations. Let us refer to such tails as \emph{W-fence endpoints}.
    Observe that W-fence endpoints are distinct as every reticulation has one outgoing arc, and since every zig-zag decomposition of any network is unique by \Cref{thm:UniqueMaxZigZag}.
    Note that each~W-fence has two W-fence endpoints, and that there exists at least one reticulation that cannot be a W-fence endpoint, namely lowest reticulations in~$N$.
    Since~$L_{\cTB}(N)$ is equal to the number of W-fences in~$N$ by \Cref{lem:L_TB=W-fences}, it follows then that
    \[L_{\cTB}(N) = \text{Number of W-fences}\le \frac{r-1}{2}.\]

\ym{We show that this bound is tight. See \Cref{fig:Ltb_bound} for an illustration.}
We construct a network~$N$ with~$r$ reticulations with~$L_{\cTB}(N) = \floor{\frac{r-1}{2}}$ as follows. Let~$k=\ceil{\frac{r+1}{2}}$. Let~$v_1\ldots v_k$ be a path of reticulations. Let $\ell := r - k$, and let~$u_2,\ldots, u_{\ell+1}$ be the rest of the reticulation vertices. Add arcs~$u_iv_i$ for~$i = 2,\ldots, \ell+1$, and connect all vertices whose degree requirements are not yet met arbitrarily, taking care that the network remains acyclic (by adding a root, tree vertices, and labelled leaves). Such a network contains~$\floor{\frac{r-1}{2}}$ W-fences.
\end{proof}

\section{\texorpdfstring{$A^*$: Valid Arc Deletion}{A*: Valid Arc Deletion}}\label{sec:A*Deletion}
As defined earlier,~$A^*_{\cC}(N)$ is the minimum number of valid arc deletions to make network~$N$ a member of class~$\cC$ (\Cref{def:val_arc_del}). 
\ym{Recall that we call a reticulation arc~$e$ \emph{valid} if deleting~$e$ and cleaning up results in a network that has exactly three fewer arcs than that of the original network.}

Compared to leaf addition, valid arc deletions are different, in that the order in which the arcs are deleted, matters.
For example, consider a reticulation~$r_1$ that is a parent of another reticulation~$r_2$. Let~$u,v$ denote the two parents of~$r_1$. Suppose~$ur_1$ is a valid arc; upon deleting this, we note that $vr_2$ is now a valid arc (assume this is the case). Note that $vr_2$ was not a valid arc, let alone an arc, in the original graph. Therefore, when considering valid arc deletions, we consider a sequence~$A = (a_1,\ldots, a_k)$ of valid arcs, such that $a_i$ is a valid arc in the network obtained by removing $(a_1,\ldots, a_{i-1})$ sequentially from the original network.


\begin{figure}
    \centering
    \noindent\makebox[\textwidth][c]{%
    \subfloat[]{\includegraphics[width=0.23\textwidth]{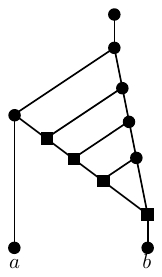}}
    \hfill
    \subfloat[]{\includegraphics[width=0.23\textwidth]{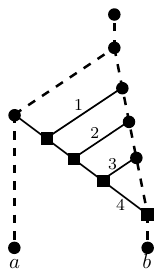}}
    \hfill
    \subfloat[]{\includegraphics[width=0.23\textwidth]{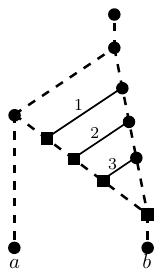}}
    \hfill
    \subfloat[]{\includegraphics[width=0.23\textwidth]{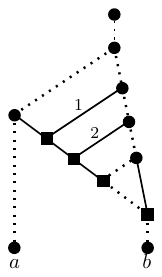}}
    }
    \caption{Example of valid arc deletion proximity measure for all studied network classes. The \ym{bold} numbered arcs give the sequence of arc deletions per network class. (a) The original network $N$. (b) The sequence of deletions to construct a tree, where the dashed arcs represent the tree embedding ($A^*_{\cT}(N) = 4$). (c) The sequence to construct both a tree-child and an orchard network, where the dashed arcs represent the final network ($A^*_{\cTC}(N) = A^*_{\cOr}(N) = 3$). (d) The sequence for obtaining a tree-based network, where the dotted arcs represent the base tree of the final network ($A^*_{\cTB}(N) =2$).}
    \label{fig:Ast_example}
\end{figure}

\ej{Just like the leaf addition measure, we will find that computing~$A^*_\cOr(N)$ is NP-hard. We prove the following result in~\Cref{sec:Hardness}, where we use the same reduction as for the hardness proof of computing $L_{\cOr}(N)$.}

\begin{thmn}[\ref{thm:A*_Or=Hard}]
	Let~$N$ be a network. Computing~$A^*_\cOr(N)$ is NP-hard. 
\end{thmn}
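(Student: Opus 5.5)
We prove NP-hardness by a polynomial reduction from \dpVC{}, reusing the gadget network built for \Cref{thm:L_Or=Hard}. Given a cubic graph $G=(V,E)$ and an integer $k$, we construct a network $N_G$ as follows. For each vertex $v\in V$ there is a vertex gadget containing one designated reticulation arc $e_v$. For each edge $\{u,v\}\in E$ there is an edge gadget: a small non-orchard structure (a stack or crown of reticulations) that becomes reducible by cherry picking exactly when the gadget of $u$ or of $v$ has been ``opened''. The target claim is
\[G \text{ has a vertex cover of size} \le k \iff A^*_\cOr(N_G)\le k + c(G),\]
where $c(G)$ is an offset, computable in polynomial time, that accounts for deletions forced in every solution (if the gadgets are designed well, $c(G)=0$).

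For the forward direction, given a vertex cover $C$, we delete the arcs $e_v$ for $v\in C$, in some fixed order. At each step we check via \Cref{lem:ValidEdgeCharacterization} that the arc is valid. Validity requires that the tail is a tree vertex and that no parallel arcs or triangles arise around the endpoints, so the gadgets must keep the $e_v$ well separated. We then exhibit a cherry-picking sequence for the resulting network. First pick the cherries inside each opened vertex gadget; this creates reticulated cherries in every incident edge gadget, and since $C$ covers every edge, all edge gadgets can then be reduced. Finally reduce the remaining tree-like backbone. Because the order of cherry reductions does not matter \cite{janssen2021cherry}, it suffices to show one successful order. Alternatively, we can build an acyclic cherry cover and invoke \Cref{thm:AcyclicCherryCover}.

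For the backward direction, take an optimal sequence of valid deletions $(a_1,\dots,a_m)$ with $m\le k$ that makes $N_G$ orchard. We map each deleted arc to a vertex of $G$: an arc inside the gadget of $v$ or on its connections maps to $v$, and an arc inside the gadget of edge $\{u,v\}$ maps to $u$ or $v$ arbitrarily. We then argue that the image is a vertex cover. Suppose some edge $\{u,v\}$ has no deleted arc in its gadget or in the gadgets of $u$ and $v$. Then, after all deletions, that edge gadget still has no reducible (reticulated) cherry, even after the rest of the network is reduced as far as possible. This contradicts orchardness, and we can detect it with the same ``distance at least 4 between leaves'' obstruction argument used in \Cref{rem:NoAddedLeaf} and \Cref{thm:IncompLorA*or}.

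The main obstacle is this backward direction. Deletions are sequential and change which arcs exist and which are valid: a deletion can create new arcs after suppression, and it can remove several reticulations at once, as in the stack in \Cref{thm:IncompLTBATB}. We must therefore show that no single valid deletion can ``serve'' two edge gadgets unless it is localized at a shared vertex gadget. Our first attempt is a locality lemma: every valid deletion affects the cherry-reducibility of at most the gadgets containing its endpoints. Combined with the cubic structure, this lets any solution be normalized into one that deletes only arcs $e_v$, without increasing its size. If the leaf-addition gadget admits cheap deletions that cheat in this way, we will pad the edge gadgets with invalid-arc configurations (cases 1--4 of \Cref{lem:ValidEdgeCharacterization}) so that the only valid arcs available are the intended ones.
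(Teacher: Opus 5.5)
There is a genuine gap. Your proposal is a template for a reduction rather than a proof, and what it leaves open is exactly what carries the argument.

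\textbf{The construction is never fixed.} The vertex gadgets, the arcs $e_v$, the edge gadgets and the offset $c(G)$ are left unspecified, so neither direction can be checked. The mechanism you describe is also not that of the network $N_G$ from \Cref{subsec:LeafAdditionHard}, which you say you are reusing. $N_G$ has no edge gadgets. An edge $uv$ of $G$ is encoded only by the two arcs $r^u_{\tau_u(v)}w^v_{\pi_v(u)}$ and $r^v_{\tau_v(u)}w^u_{\pi_u(v)}$ joining the 15-arc principal N-fences of $\Gad(u)$ and $\Gad(v)$. An uncovered edge is fatal because of a mutual dependency between these two N-fences, not because a local non-orchard piece is waiting to be ``opened''.

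Your concrete suggestion of a crown as an edge gadget would actually break the reduction. Both tree vertices of a crown have only reticulation children. So while the four crown arcs are present, neither can acquire a leaf child, and no crown arc can be the middle arc of a reticulated cherry. A crown is therefore never opened by deletions elsewhere, and every edge would cost its own deletion.

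\textbf{The backward direction is not established.} The locality lemma, the normalization to solutions using only the $e_v$, and the `distance at least $4$' obstruction are placeholders. That obstruction in \Cref{rem:NoAddedLeaf} is an ad hoc inspection of one fixed network. It is not a certificate that survives an arbitrary sequence of deletions elsewhere.

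\textbf{The missing idea} is an order-independent, local certificate of non-orchardness. The paper obtains it from cherry covers:
\begin{itemize}
\item By \Cref{lem:n_fence_unique}, the reticulated cherry shapes covering an N-fence of length at least $3$ are the same in every cherry cover.
\item By \Cref{lem:n_fence_cycle}, two such N-fences joined by directed paths running from a later part of each fence to an earlier part of the other force a cycle in every auxiliary graph. The network is then not orchard by \Cref{thm:AcyclicCherryCover}.
\end{itemize}

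Robustness under deletions follows from the structure of $N_G$. In $N_G$, every reticulation arc has both endpoints in one gadget, and every arc between two gadgets has a reticulation tail. Moreover, a valid deletion suppresses exactly its two endpoints and nothing else. By induction along the deletion sequence, if no deleted arc lies in $\Gad(u)$ or $\Gad(v)$, then no vertex of these gadgets is ever touched. Both principal parts remain N-fences and the two connecting arcs survive (\Cref{lem:AORDimpliesVC}). The set of gadgets touched by the sequence is then directly a vertex cover of size at most $|A|$, and no normalization is needed.

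For the forward direction, the paper deletes $w^v_4r^v_4$ for each $v$ in the cover; these arcs are valid in any order. It certifies orchardness by restricting the HGT-consistent labelling of \Cref{lem:VCiffLORD} to the surviving vertices, rather than by exhibiting a cherry-picking sequence.
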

\ej{We continue by introducing a subgraph type called \textit{OSOS}. This structure is used to characterize the feasibility, or finiteness, of the $A^*$ measure.}

\subsection{OSOS-subgraph}
In the following series of results, we examine the nature of so-called `one source one sink subgraphs', abbreviated as \emph{OSOS-subgraphs}; see \Cref{fig:osos} for an illustration and the following definition.
\begin{definition}[OSOS-subgraph]
    Let $N$ be a network \ym{with potentially vertices of indegree and outdegree $1$}, and let~$u$ and~$v$ be vertices in~$N$ such that~$u$ is above~$v$,~$u$ is a tree vertex, and~$v$ is a reticulation. We say that~$uv$ induces an OSOS-subgraph in~$N$ if 
    \begin{itemize}
        \item every path from the root to $v$ contains~$u$, and
        \item every path from~$u$ to a leaf contains~$v$, and
    \end{itemize}
    In addition, if $u$ is the lowest vertex that satisfies the above two conditions (or equivalently, $v$ is the highest vertex that satisfies the above two conditions), we say that $uv$ induces a \emph{minimal OSOS}.
    Note that a network contains an OSOS-subgraph if and only if it contains a minimal OSOS.
    For this reason, we use OSOS to refer to a minimal OSOS from this point on.
    Whenever~$uv$ induces an OSOS-subgraph, we shall talk about this OSOS-subgraph as the graph induced by all vertices that are descendants of~$u$ and are ancestors of~$v$.
\end{definition}
\begin{observation}\label{obs:OSOSLaminar}
    \ym{Let~$w$ be a vertex in an OSOS induced by~$uv$. Then every path from the root to~$w$ contains~$u$.}
\end{observation}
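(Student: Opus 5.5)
We argue directly from the two defining conditions of the OSOS induced by~$uv$ (we do not need minimality). Recall that the OSOS-subgraph consists of the vertices that are descendants of~$u$ and ancestors of~$v$. So fix such a vertex~$w$. Then there is a path~$Q$ from~$u$ to~$w$, and a path~$Q'$ from~$w$ to~$v$. Suppose, for a contradiction, that some path~$P$ from the root~$\rho$ to~$w$ avoids~$u$.

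The main step is to splice~$P$ with~$Q'$. Their concatenation~$PQ'$ is a directed walk from~$\rho$ to~$v$, and since~$N$ is acyclic it is in fact a path. By the first condition of the OSOS definition,~$PQ'$ must contain~$u$. By assumption~$u\notin P$, so~$u$ must lie on~$Q'$, strictly after~$w$; note that~$u\neq w$ because~$u\notin P$ while~$w\in P$. Hence~$u$ is a descendant of~$w$. But~$Q$ shows that~$w$ is a descendant of~$u$, and~$u\neq w$, so~$N$ contains a directed cycle. This contradicts acyclicity, and therefore every path from the root to~$w$ contains~$u$.

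The boundary cases are harmless. If~$w=u$ the claim is trivial. If~$w=v$ it is exactly the first condition of the definition. The only point needing care is that~$PQ'$ is a genuine path, and acyclicity guarantees this. The argument uses only the first bullet of the definition together with acyclicity, so it applies equally to networks with indegree-1 outdegree-1 vertices.
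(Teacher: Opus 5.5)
Your proof is correct and is essentially the argument the paper leaves implicit, since it states this as an observation without proof: you concatenate a hypothetical root-to-$w$ path avoiding $u$ with a $w$-to-$v$ path, apply the first OSOS condition, and use acyclicity to reach a contradiction. You are also right that neither minimality nor the second defining condition is needed.
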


In compiler theory, specifically in the study of control-flow graphs, one says that a node~$u$ \emph{dominates} another node~$v\ne u$ if every path from the root to~$v$ contains~$u$ \cite{lengauer1979fast}.
We call a vertex a \emph{dominator} if it dominates another vertex.
If~$u$ dominates~$v$ and if every other dominator of~$v$ dominates~$u$, then we say that~$u$ \emph{immediately dominates}~$v$. Note that such nodes are unique. 
We call a vertex an \emph{immediate dominator} if it immediately dominates another vertex.
Note that OSOS-subgraphs must necessarily be induced by tree vertices which immediately dominate reticulations.

\begin{figure}
    \centering
    \subfloat[]{\includegraphics{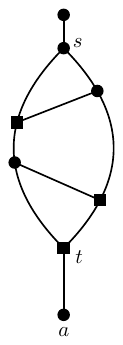} \label{subfig:osos_tb}}
    \hspace{10mm}
    \subfloat[]{\includegraphics{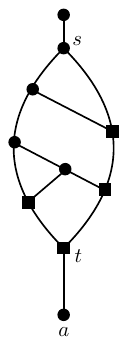}}
    \caption{Two examples of networks with OSOS-subgraphs (between nodes $s, t$).}
    \label{fig:osos}
\end{figure}

We say that a network is \emph{OSOS-free} if no pair of vertices induces an OSOS-subgraph. A similar type of graph, called a \emph{two-terminal series parallel (TTSP) graph} is investigated in the field of algorithm design, to speed-up certain computations on graphs in linear time~\cite{eppstein1992parallel}. Though similar, the two definitions are different. TTSP graphs contains either a single arc between two vertices, a parallel arc, or vertices of indegree-1 and outdegree-1. This means that TTSP graphs cannot be networks, and therefore they cannot be OSOS-subgraphs, except for the single arc case. TTSP graphs can be thought of as specific `one source one sink' subgraphs, if we were to generalize the definition of OSOS-subgraphs. We start by investigating the structure of OSOS-subgraphs.

\begin{lemma}\label{lem:OSOShas3r-1arcs}
    Let~$N$ be a network where two vertices~$u$ and~$v$ induce an OSOS-subgraph $S$.
    If~$S$ contains~$r$ reticulations, then~$S$ has~$3r-1$ arcs.
\end{lemma}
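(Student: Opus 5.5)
Within $S$, I plan to count vertices by their degrees and then apply the handshake identity. Let $S$ be the subgraph induced by all vertices that are descendants of $u$ and ancestors of $v$. First I determine, for each vertex $w$ of $S$, its in-degree and out-degree inside $S$.

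Let $w \neq u$ be a vertex of $S$. By \Cref{obs:OSOSLaminar}, every path from the root to $w$ contains $u$. Therefore every parent of $w$ in $N$ is a descendant of $u$. Each such parent is also an ancestor of $v$, since it reaches $v$ through $w$. Hence all parents of $w$ lie in $S$, and $w$ has the same in-degree in $S$ as in $N$.

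Symmetrically, let $w \neq v$ be a vertex of $S$. Every path from $u$ to a leaf passes through $v$, so every path from $w$ to a leaf also passes through $v$, because $w$ is reachable from $u$. Let $c$ be a child of $w$. Then $c$ is a descendant of $u$. Take any path from $c$ to a leaf (one exists, since the network has no unlabelled sinks). It contains $v$, so $c$ is an ancestor of $v$, or $c=v$. Thus all children of $w$ lie in $S$, and $w$ has the same out-degree in $S$ as in $N$.

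Two further facts follow. No leaf lies in $S$ other than possibly below $v$, and leaves are excluded because they are not ancestors of $v$. Also, $u$ has in-degree $0$ and out-degree $2$ in $S$, while $v$ has in-degree $2$ and out-degree $0$.

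Next I count the vertices. Let $t$ be the number of tree vertices of $S$ other than $u$, and let the $r$ reticulations of $S$ include $v$. In $S$, the vertex $u$ has out-degree $2$, each of the $t$ other tree vertices has out-degree $2$, each of the $r-1$ reticulations other than $v$ has out-degree $1$, and $v$ has out-degree $0$. Summing out-degrees gives
\[ |A(S)| = 2 + 2t + (r-1). \]
Summing in-degrees instead, $u$ contributes $0$, each of the $t$ tree vertices contributes $1$, and each of the $r$ reticulations contributes $2$, so
\[ |A(S)| = t + 2r. \]
Equating the two expressions gives $t = r-1$, and hence $|A(S)| = 3r-1$.

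The main obstacle is handling the definition's allowance for vertices of in-degree and out-degree $1$. If such vertices occur in $S$, I add a count $d$ for them. Each contributes $1$ to both sums, so they cancel in the equation, but the final arc count becomes $3r-1+d$. I would therefore first note that the lemma is meant for an honest network, or for the setting in which the statement is applied, where $d=0$. The other delicate step is justifying rigorously that every child of a non-sink vertex of $S$ stays in $S$. This needs the facts that every vertex reaches some leaf and that $v$ is not a leaf. Acyclicity guarantees that the induced set is well defined.
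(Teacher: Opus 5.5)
Your proof is correct and follows the paper's argument: a degree count on $S$ in which only $u$ and $v$ lose degree. Equating the in-degree and out-degree sums justifies the paper's unproved assertion that $S$ has equally many tree vertices and reticulations. Beyond the paper, you verify that every other vertex of $S$ keeps all its parents and children inside $S$, and you note that the count requires $N$ to be a genuine network with no in-degree-1, out-degree-1 vertices.
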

\begin{proof}
    Observe that an OSOS-subgraph~$S$ induced by~$u$ and~$v$ contains only tree vertices and reticulations, except for~$u$ and~$v$ which have~indegree-0, outdegree-2, and indegree-2, outdegree-0, respectively. 
    Since there is exactly one source and one sink, the number of tree vertices must be equal to the number of reticulations.
    So in total,~$S$ contains~$2r$ vertices where~$r$ is the number of reticulations in~$S$.
    To find the number of arcs in~$S$, we use the handshaking lemma on the fact that every vertex in~$S$ is of degree~$3$ while~$u$ and~$v$ have degree~$2$.
    \[\text{Number of arcs in~$S$} = \displaystyle\frac{2+2+3(2r-2)}{2} = 3r-1.\]
\end{proof}

\begin{lemma}\label{lem:OSOS-freeArcDeletion}
    Let~$N$ be an OSOS-free network. \ym{Then either~$N$ is a tree,} or there exists a valid arc such that 
    deleting it from $N$ results in a network that is OSOS-free.
\end{lemma}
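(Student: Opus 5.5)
Assume $N$ is OSOS-free with at least one reticulation. The plan has two stages. First we show that $N$ contains a valid arc at all. Then we choose that arc carefully, so that deleting it cannot create a new OSOS.

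\textbf{Existence of valid arcs.} We use \Cref{lem:ValidEdgeCharacterization}. Each of the four invalidity configurations is itself a small OSOS-type structure, or is forced by one.
\begin{itemize}
\item In Case 2, the vertices $p$ and $s$ behave like a source and a sink: $p\to u\to s$ and $p\to s$. This is a triangle, and after the next step it gives a pair inducing an OSOS.
\item Case 4 ($p\to u$, $p\to r$, $u\to r$, $u\to s$, $r\to s$) gives $pr$ as an OSOS pair, because every path from $p$ to a leaf passes through $r$ (both children of $p$ lead into $r$ or $s$, and $s$ is a reticulation).
\end{itemize}
We need to be careful here: the definition requires the source to be a tree vertex and the sink to be a reticulation. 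So we check each configuration separately and extract such a pair $(u',v')$, for example $(p,s)$ in Case 2 and $(v,c)$ in Case 3.

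Case 1 (the tail is a reticulation) is handled by looking at a highest reticulation $r$. Both of its incoming arcs have tree-vertex tails, so Case 1 cannot apply to them. If both arcs were invalid by Cases 2--4, we would obtain an OSOS, which is a contradiction. Hence some incoming arc of a highest reticulation is valid.

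\textbf{Choosing the arc.} Among the valid arcs, we pick one that is "extremal", for instance an incoming arc $e=ur$ of a highest reticulation, or a lowest valid arc. We then argue by contradiction. Suppose that deleting $e$ and cleaning up (suppressing $u$ and $r$) creates an OSOS induced by $xy$ in $N'=N-e$. Since $N$ had none, one of the two defining conditions must fail in $N$ but hold in $N'$.
\begin{itemize}
\item If the first condition fails in $N$, then there is a root-to-$y$ path in $N$ avoiding $x$. This path must use $e$, so $r$ lies between and its other parent enters from outside.
\item If the second condition fails in $N$, then there is a path from $x$ to a leaf avoiding $y$. Again it must use $e$, so $u$ is a descendant of $x$ and the arc $e$ escapes the would-be OSOS.
\end{itemize}
In either situation, we try to show that $N$ already contains an OSOS. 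The natural candidates are induced by $xy$ with $e$ added back, or by $x$ (respectively $u$) together with the reticulation $r$ or the vertex $y$. This uses the dominator structure from \Cref{obs:OSOSLaminar}.

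\textbf{Main obstacle.} The difficult step is this no-new-OSOS argument. Deleting an arc genuinely can create OSOSs; consider a network whose only escape route from a region is through $e$. So the choice of $e$ must exploit freedom. If deleting one incoming arc $ur$ of $r$ creates an OSOS, I would try the other incoming arc $vr$, and show that both choices failing simultaneously forces an OSOS in $N$. Intuitively, the two newly created OSOSs, together with $r$, glue into a single one-source-one-sink region. If this local swap fails, the fallback is an induction on the number of reticulations, selecting $e$ at a lowest reticulation so that the ancestor structure above is unaffected.
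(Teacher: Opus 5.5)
Your proposal has two genuine gaps. The first is in the existence step, which is incorrect. Cases 2 and 3 of \Cref{lem:ValidEdgeCharacterization} are triangles, not OSOS-subgraphs. In Case 2 the pair $(p,s)$ violates the second condition, because $p\to u\to r$ reaches a leaf while avoiding $s$. In Case 3 the pair $(v,c)$ violates the first condition, because of the path through $u\to r\to c$. (Also, in Case 4 the OSOS is induced by $ps$, not $pr$.) In fact, a highest reticulation of an OSOS-free network can have both incoming arcs invalid. Take the arcs $\rho a$, $ap$, $ap'$, $pu$, $ps$, $us$, $ur$, $p'v$, $p's'$, $vs'$, $vr$, with a leaf below each of $s,s',r$. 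Both $ur$ and $vr$ are invalid by Case 2, and no pair induces an OSOS. Valid arcs (e.g.\ $us$) exist, but only away from $r$. So ``some incoming arc of a highest reticulation is valid'' is false.

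The second gap is more serious: the no-new-OSOS step is never carried out, and both routes you suggest fail. The gluing claim is false. Take $\rho a$, $ap_u$, $ap_v$, $ur$, $vr$ and, for $x\in\{u,v\}$, the arcs $p_x x$, $p_x t_x$, $x w_x$, $t_x w_x$, $t_x q_x$, $w_x q_x$, with a leaf below each of $q_u,q_v,r$. This network is OSOS-free, and $ur$ and $vr$ are both valid. Yet deleting $ur$ turns $\{p_u,t_u,w_u,q_u\}$ into an OSOS induced by $p_uq_u$, and symmetrically for $vr$. Since $r$ is also a lowest reticulation, your fallback fails too. Deleting an arc into a lowest reticulation can remove the second exit of a region above it, so the ancestor structure is exactly what gets affected.

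Your gluing contradiction works only in the sub-case where the two new OSOSs contain $\{v,r\}$ and $\{u,r\}$ respectively; the paper uses precisely this. The missing idea handles the remaining case. Suppose, say, the new OSOS of $N-ur$ is induced by $p_uq_u$ and contains $u$ but not $r$. In $N$ this is a one-source-two-sink region (exits $q_u$ and $ur$) with fewer reticulations than $N$. The paper proceeds in three steps:
\begin{enumerate}
\item Attach leaves at $u$ and below $q_u$ to obtain an OSOS-free network $S'$.
\item Apply the induction hypothesis on the reticulation number to $S'$ (not to $N$ minus an arc). This gives a valid arc of $S'$ whose deletion keeps $S'$ OSOS-free.
\item Since validity and OSOS creation are local to the region, that arc is valid in $N$ and creates no OSOS there. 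In the example above it yields e.g.\ $t_uw_u$.
\end{enumerate}
In the paper, the existence of a valid arc likewise comes out of this induction rather than from a separate local argument.
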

\begin{proof}
	We prove the lemma by induction on the reticulation number of~$N$. For the 
	base case, consider the cases when~$N$ is a tree and when~$N$ has just one 
	reticulation. If~$N$ is a tree, then it contains no reticulation arcs, and 
	therefore no valid arcs. The claim is vacuously true. When~$N$ has just one 
	reticulation, then both reticulation arcs are valid. Deleting either arcs
	results in a tree, which is, by definition, OSOS-free. Therefore the base 
	cases have been shown.
	
	For the inductive step, suppose we have shown the claim to be true for 
	all OSOS-free networks with reticulation number fewer than~$k$, where~$k\ge 
	2$. Let~$N$ be an OSOS-free network on $k$ reticulations. Consider a 
	highest reticulation~$r$ with arcs~$ur$ and~$vr$.
    \ym{If either of the deletions~$ur$ or~$vr$ results in an OSOS-free network, then we are done.
    So suppose that the two networks obtained by deleting~$ur$ and by deleting~$vr$, respectively, both contain an OSOS.}
    Let~$N'$ denote the network obtained by deleting~$ur$, 
	and not suppressing~$u$ and~$r$. 
    \ym{Since~$u$ and~$r$ are degree-2 vertices in~$N'$, we have that~$N'$ contains an OSOS-subgraph if and only if the network obtained by cleaning up~$N'$ contains an OSOS}.
    Note that an OSOS-subgraph of~$N'$ must contain the vertex~$u$ or~$r$. 
	It cannot contain neither, and it cannot contain both vertices~$u$ and~$r$, as otherwise~$N$ must also have contained an OSOS. 
    A similar statement holds for 
	the network~$N''$ obtained by deleting~$vr$ and not suppressing~$v$ 
	and~$r$. 
	
	We first show that $u$ is contained in an OSOS-subgraph in~$N'$ or that $v$ 
	is contained in an OSOS-subgraph in~$N''$. Suppose for a contradiction that 
	this was not true, that~$u$ and~$v$ are not contained in an OSOS-subgraph 
	in~$N'$ and~$N''$, respectively. This means that~$v,r$ are contained in an 
	OSOS-subgraph induced by, say,~$p_vq_v$ in~$N'$, and the vertices~$u,r$ 
	are contained in an OSOS-subgraph induced by, say,~$p_uq_u$ in~$N''$. 
    \ym{If~$q_u$ and~$q_v$ are incomparable, then neither $p_vq_v$ nor~$p_u q_u$ can induce an OSOS-subgraph in~$N'$ and~$N''$, respectively.
    So they must be comparable.}
    Without loss of generality, 
	suppose~$q_u$ is above~$q_v$. By definition of OSOS-subgraphs,~$p_v$ must 
	be above~$q_u$. 
    \ym{Since~$p_v q_v$ induces an OSOS-subgraph in~$N''$, all paths from the root to~$q_u$ must contain~$p_v$ by \Cref{obs:OSOSLaminar}.
    So~$p_v$ must be above~$p_u$.}
    But this gives a contradiction as~$p_vq_v$ would induce 
	an OSOS-subgraph in~$N$. 
    So~$u$ is contained in an OSOS-subgraph in~$N'$ or 
	$v$ is contained in an OSOS-subgraph in~$N''$. Without loss of generality, 
	suppose that~$u$ is contained in an OSOS-subgraph in~$N'$.
	
	Let us denote this OSOS-subgraph as one being induced by~$p_uq_u$. 
    Consider the network~$S'$ obtained by adding 
	leaves~$x,y$ and arcs~$ux,q_uy$ to this OSOS-subgraph.
    \ym{Since $S'$ is obtained from~$N$ by deleting arcs and attaching leaves that are not involved in any OSOS structure, any OSOS-subgraph in~$S'$ corresponds to an OSOS-subgraph in~$N$. Thus~$S'$ is OSOS-free.} 
	Observe that~$S'$ has at most~$k-1$ reticulations, as it does not contain 
	the reticulation vertex~$r$. By induction hypothesis, there exists a valid 
	arc~$e$ in~$S'$ such that deleting it results in a network that is 
	OSOS-free. Clearly,~$e$ is also a valid reticulation arc in~$N$, since validity is locally contained. We claim 
	that deleting~$e$ results in a network that is OSOS-free. But this is 
	immediate. One can view~$S'$ as an `one source two sink subgraph' of~$N$; 
	deleting~$e$ will only affect the network locally, in particular, only 
	in~$S'$. If deleting~$e$ induces an OSOS-subgraph in~$N$, then this would 
	induce an OSOS-subgraph in~$S'$, which would be a contradiction. Therefore, 
	$e$ is a valid arc in~$N$ such that deleting it results in a network that 
	is OSOS-free.
\end{proof}

\subsection{\texorpdfstring{Feasibility of $A^*$}{Feasibility of A*}}
In the following, we use the OSOS characterization to give results for the feasibility, or finiteness, of $A^*$ for trees, tree-child and orchard networks. As the proofs of each of the network classes are intertwined, this section is not separated by network class. We start with the following lemma:

\begin{lemma}\label{lem:OrchHasFiniteA*T}
	Let~$N$ be an orchard network. Then $A^*_{\cT}(N)$ is finite.
\end{lemma}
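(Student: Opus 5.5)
Induction on the number of reticulations, using cherry-picking sequences. The key claim: if $N$ is orchard and not a tree, then $N$ has a valid reticulation arc whose deletion leaves an orchard network. Iterating this claim gives a sequence of valid arc deletions ending in a tree, so $A^*_{\cT}(N)\le r$. Orchardness is the invariant, so induction applies to the smaller network.

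\emph{Step 1: locate a candidate arc.} Take a cherry-picking sequence $S$ fully reducing $N$. Since $N$ has a reticulation, $S$ contains at least one reticulated cherry. Reducing a cherry $(x,y)$ amounts to deleting $x$ and cleaning up, so we keep track of vertices. Consider the first reticulated cherry $(x,y)$ in $S$. In the network $N_i$ where it is reduced, there is a reticulation $p_x$ with parent $p_y$, and $p_y$ is the parent of leaf $y$. The middle arc $p_yp_x$ of $N_i$ corresponds to a reticulation arc $e$ of $N$: the earlier reductions only delete pendant subtrees and suppress vertices, and they never delete reticulation arcs.

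\emph{Step 2: validity.} By \Cref{lem:ValidEdgeCharacterization}, $e$ is valid in $N_i$. Its tail $p_y$ is a tree vertex, the head $p_x$ has a leaf child $x$, and parallel arcs could only arise through the four listed configurations, none of which fits a reticulated cherry. The only exception is when $p_y$'s parent is also the other parent of $p_x$. That is case 4 of the lemma, and then $N_i$ locally looks like a two-leaf structure. I expect the main obstacle to be this degenerate case, and I would handle it by choosing instead the other incoming arc of $p_x$, or by arguing that it makes $e$ still valid in $N$.

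I then need validity in $N$ itself, not just in $N_i$. I would argue that each of the invalidity configurations 1 to 4 for $e$ in $N$ involves vertices that persist through the earlier cherry reductions. Those reductions only remove leaves below tree vertices, so an invalid configuration in $N$ would still be present in $N_i$. If that fails, a cleaner alternative is to use \Cref{thm:AcyclicCherryCover}: take a sink of the acyclic auxiliary graph that is a reticulated cherry shape, with no sink being a cherry shape, whose middle arc lies directly above leaves in $N$ itself.

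\emph{Step 3: orchardness is preserved.} Deleting $e$ from $N$ and applying $S$ with the pair $(x,y)$ removed gives the same networks as $S$ applied to $N$ from step $i$ onward. This holds because cherry reductions commute with deleting $e$; order-independence of cherry picking~\cite{janssen2021cherry} helps here. So $N-e$ is orchard with one fewer reticulation, and the induction closes.
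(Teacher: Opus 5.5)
Your proof is correct. It reaches the same kind of arc as the paper, the middle arc of a reticulated cherry with only tree vertices and leaves below it, but by a more direct route.

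\textbf{Comparison with the paper.} The paper uses \Cref{thm:AcyclicCherryCover} to choose a reticulated cherry shape $\{uv,pu,pq\}$ that is lowest in an acyclic cherry cover. Then every vertex below $p$ other than $u$ is a tree vertex or a leaf, so validity of $pu$ in $N$ follows at once from \Cref{lem:ValidEdgeCharacterization}. It then needs order-independence of cherry picking to build a sequence $ScS'$ in which $c$ picks exactly this reticulated cherry, so that $SS'$ reduces $N-pu$.

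You instead take an arbitrary reducing sequence and its first reticulated cherry. The preceding steps are ordinary cherry reductions, and every arc they create has a leaf head. So $p_yp_x$ is an original arc of $N$, and the local situation is the same as for the paper's lowest shape.

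The price is that validity must be transferred from $N_i$ back to $N$ by your persistence argument. This works: each configuration in \Cref{lem:ValidEdgeCharacterization} puts a reticulation at $p_y$ or at a child of $p_y$ or $p_x$, and reticulations and the arcs into them survive cherry reductions. In exchange you need neither the cherry-cover characterization nor order-independence, since simply dropping $(x,y)$ from $S$ reduces $N-e$. Your appeal to order-independence in Step~3 is therefore unnecessary.

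\textbf{Correction to Step~2.} The ``degenerate case'' you flag does not exist. If the parent $g$ of $p_y$ is also the other parent of $p_x$, this is not case~4 of \Cref{lem:ValidEdgeCharacterization}. Case~4 also requires $p_y$ and $p_x$ to share a child, which is impossible here: the child of $p_x$ is $x$, and the other child of $p_y$ is the leaf $y\neq x$. Deleting $e$ in that configuration suppresses $p_y$ and $p_x$ and leaves $x,y$ as a cherry below $g$, exactly three arcs fewer. So $e$ is always valid.

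In particular, do not switch to the other incoming arc of $p_x$. Step~3 relies on $e$ being exactly the arc removed by reducing $(x,y)$.

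\textbf{On your fallback.} The version via sinks of the auxiliary graph is misstated, since all sinks may be cherry shapes. What is needed is a reticulated cherry shape with no other reticulated cherry shape below it, which is the paper's ``lowest'' shape.
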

\begin{proof}
	We prove the claim by induction on~$r$, the number of reticulations in~$N$. 
	When~$r=0$, the network~$N$ is a tree; when $r=1$, it is not possible to 
	have an invalid arc. So the base case is established. In both 
	cases,~$A^*_{\cT}(N)$ are finite. Suppose now 
	that~$r\ge 2$, and that we have proved the claim for all networks with 
	number of reticulations fewer than~$r$.
	
	Since~$N$ is orchard, we can find an acyclic cherry cover~$P$ by 
	\Cref{thm:AcyclicCherryCover}. Because the cover is acyclic, there must 
	exist a reticulated cherry shape~$\{uv, pu, pq\}$ that is lowest. We claim 
	that the middle reticulation arc~$pu$ is valid. As a lowest reticulated 
	cherry shape, $p$, and thereby other vertices of the shape, can be above 
	reticulations other than~$r$. This means that all vertices below~$p$ 
	excluding $u$ must be tree vertices or leaves. Furthermore, $p$ is a tree 
	vertex. By \Cref{lem:ValidEdgeCharacterization}, the arc~$pu$ must be valid.
	
	Let~$N'$ denote the network obtained by deleting~$pu$. We now claim 
	that~$N'$ is orchard. By Theorem 1 of \cite{janssen2021cherry}, the order 
	in which cherry reductions take place does not matter. So let~$ScS'$ be a 
	cherry-picking sequence for~$N$ where~$S$ reduces the subtree rooted at $v$ 
	and the subtree rooted at~$q$, and where~$c$ removes the arc~$pu$ as part 
	of a reticulated cherry reduction. It is easy to see that~$SS'$ is a 
	cherry-picking sequence for~$N'$. Therefore, $N'$ is an orchard network.
	
	The network~$N'$ is orchard and contains~$r-1$ reticulations. By induction 
	hypothesis, $A^*_{\cT}(N')$ is finite. Then we have that $A^*_{\cT}(N) = 
	A^*_{\cT}(N')+1$, which must also be finite.
\end{proof}

\begin{lemma} \label{lem:At_Aor_finite}
    Let~$N$ be a network. $A^*_{\cT}(N)$ is finite if and only if~$A^*_{\cOr}(N)$ is finite.
\end{lemma}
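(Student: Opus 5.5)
The two implications have very different difficulty, so I would treat them separately.

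\emph{Forward direction.} Suppose $A^*_{\cT}(N)$ is finite. Then there is a sequence $(a_1,\ldots,a_k)$ of valid arc deletions that turns $N$ into a tree. Consider the networks $N=N_0,N_1,\ldots,N_k$ produced along this sequence. The last one, $N_k$, is a tree, so it lies in $\cT$. By \Cref{lem:NetworkClassContainment}, trees are tree-child and tree-child networks are orchard, so $N_k \in \cOr$. The same sequence therefore witnesses $A^*_{\cOr}(N)\le A^*_{\cT}(N)<\infty$. This is just the inequality chain of \Cref{obs:A*ClassIneq}.

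\emph{Reverse direction.} Suppose $A^*_{\cOr}(N)$ is finite, and take a sequence of valid arc deletions $(a_1,\ldots,a_m)$ that makes $N$ orchard. Call the result $N_m$. By \Cref{lem:OrchHasFiniteA*T}, $A^*_{\cT}(N_m)$ is finite, so there is a further sequence $(b_1,\ldots,b_\ell)$ of valid arc deletions turning $N_m$ into a tree. Now concatenate the two to get $(a_1,\ldots,a_m,b_1,\ldots,b_\ell)$. By definition of $A^*$, a deletion sequence is valid exactly when each arc is valid in the network obtained after the previous deletions. Every $a_i$ is valid at its stage, and every $b_j$ is valid in the network obtained from $N_m$ by $b_1,\ldots,b_{j-1}$, which is precisely the current network in the concatenated sequence. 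So the concatenation is a legitimate sequence of valid arc deletions from $N$ to a tree, and $A^*_{\cT}(N)\le A^*_{\cOr}(N)+A^*_{\cT}(N_m)<\infty$.

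The only step needing care is this concatenation. Validity is defined relative to the current network, so I would check explicitly that cleaning up after the $a_i$ leaves exactly $N_m$ as the starting network for the $b_j$. This holds because each deletion-plus-cleanup yields a well-defined network. With that checked, the lemma follows directly from \Cref{lem:OrchHasFiniteA*T}, which carries all the substantive work.
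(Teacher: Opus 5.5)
Your proof is correct and follows the paper's argument: the forward direction holds because trees are orchard, and for the reverse you append a tree-producing sequence from \Cref{lem:OrchHasFiniteA*T} to an orchard-producing sequence. The only difference is that you state $A^*_{\cT}(N)\le A^*_{\cOr}(N)+A^*_{\cT}(N_m)$, whereas the paper claims equality because each valid deletion removes exactly one reticulation; the inequality is all that finiteness requires.
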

\begin{proof}
	Since a tree is orchard, we have that~$A^*_{\cOr}(N) \le A^*_{\cT}(N)$. 
	This establishes one direction of the proof. So suppose 
	that~$A^*_{\cOr}(N)$ is finite, in other words, that there exists a 
	sequence of $A^*_{\cOr}(N)$ valid arc deletions such that the resulting 
	network, say~$N'$, is orchard. By \Cref{lem:OrchHasFiniteA*T}, we have 
	that~$A^*_{\cT}(N')$ is finite. We have
	\[A^*_{\cT}(N) = A^*_{\cOr}(N) + A^*_{\cT}(N'),\]
	where the equality follows since the number of valid arc deletions is exactly the number of reticulations in~$N$.
    As both summands on the right-hand side are finite, we must have that $A^*_{\cT}(N)$ is 
	finite.
\end{proof}

Note that if~$A^*_{\cT}(N)$ is finite, it is equal to the number of 
reticulations of~$N$. 

\begin{theorem}\label{thm:FiniteLTiffOSOS-free}
    Let~$N$ be a network. Then~$A^*_{\cT}(N)$ is finite if and only if~$N$ is OSOS-free.
\end{theorem}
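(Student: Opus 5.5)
We prove the two directions separately. The backward direction follows from \Cref{lem:OSOS-freeArcDeletion} by induction on the number of reticulations. The forward direction uses an arc-counting argument based on \Cref{lem:OSOShas3r-1arcs}.

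\emph{OSOS-free implies finite.} Suppose~$N$ is OSOS-free. We induct on the reticulation number~$r$ of~$N$. If~$r=0$, then~$N$ is a tree and~$A^*_{\cT}(N)=0$. Otherwise, \Cref{lem:OSOS-freeArcDeletion} provides a valid arc~$e$ whose deletion yields an OSOS-free network~$N'$. Since~$e$ is valid, deleting it removes exactly three arcs, and hence exactly one reticulation. So~$N'$ has~$r-1$ reticulations, and by induction~$A^*_{\cT}(N')$ is finite. Prepending~$e$ to the sequence for~$N'$ shows that~$A^*_{\cT}(N)\le 1+A^*_{\cT}(N')<\infty$.

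\emph{Finite implies OSOS-free.} We argue by contraposition: suppose~$uv$ induces an OSOS-subgraph~$S$ with~$r_S\ge1$ reticulations, and suppose for contradiction that a sequence of valid arc deletions turns~$N$ into a tree. The key invariant is that \emph{an OSOS persists under valid arc deletions}. Take any valid arc~$e$ of the current network.
\begin{itemize}
\item If~$e$ lies outside the current OSOS~$S$, then no internal vertex of~$S$ is affected. The dominator and postdominator properties of~$u$ and~$v$ still hold after deletion and suppression, because deleting arcs only removes paths; one has to check that suppressed vertices on the boundary do not matter. So~$S$, or its image after suppression of~$u$ or~$v$, still induces an OSOS-subgraph. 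If~$u$ or~$v$ is suppressed, I expect the OSOS to transfer to the neighbouring vertex, since the domination relations are inherited.
\item If~$e$ lies inside~$S$, then deleting it and cleaning up removes exactly three arcs of~$S$. By \Cref{lem:OSOShas3r-1arcs} the arc count goes from~$3r_S-1$ to~$3(r_S-1)-1$. I expect the resulting subgraph between the images of~$u$ and~$v$ to remain a one-source one-sink subgraph with~$r_S-1$ reticulations, with the dominance conditions preserved.
\end{itemize}

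Iterating, we eventually reach a state with~$r_S=1$ where some arc inside~$S$ must be deleted. Here we get a contradiction with validity. If~$r_S=1$, the subgraph has~$2$ arcs between~$u$ and~$v$, i.e.\ two parallel paths, so~$S$ consists of the two arcs~$uv$. This is a parallel arc pair in a network, which is impossible. Hence at~$r_S=1$ the structure is really the configuration of \Cref{lem:ValidEdgeCharacterization}, case 4 (with~$u$'s parent and~$v$'s child), where neither reticulation arc is valid. I will more carefully track the count including the boundary arcs to pin this base case down. Consequently the reticulation~$v$ can never be removed, so we never reach a tree, contradicting finiteness.

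The main obstacle is the persistence invariant. We must verify that after a valid deletion inside an OSOS the dominance conditions still hold (no new escape path is created, which is plausible since deletions only remove paths), and that validity forces the deletion to stay inside~$S$ rather than destroying the boundary. I would attack this by working with unsuppressed intermediate graphs, as in the proof of \Cref{lem:OSOS-freeArcDeletion}, where path-based conditions are clearly monotone under arc deletion. Then I would argue that suppression of degree-2 vertices preserves the OSOS, using the arc count of \Cref{lem:OSOShas3r-1arcs} to rule out degeneration to a parallel pair, which validity forbids.
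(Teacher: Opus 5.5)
Your backward direction is the paper's argument. Your forward direction uses the same two ingredients as the paper: the $3r-1$ arc count of \Cref{lem:OSOShas3r-1arcs}, and the fact that valid deletions cannot dissolve an OSOS. However, the endgame you wrote is wrong. The descent never reaches $r_S=1$. As you note yourself, an OSOS with one reticulation would be a pair of parallel arcs, so every OSOS has at least two reticulations, and a ``contradiction at $r_S=1$'' proves nothing. The process stops at $r_S=2$, and what must be checked there is that the unique two-reticulation OSOS has no valid arc. That OSOS consists of the source $u$ with children $a$ (a tree vertex) and $b$ (a reticulation), plus the arcs $ab$, $av$, $bv$. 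Its arcs $bv$, $ub$, $ab$, $av$ are invalid by cases 1, 3, 4, 2 of \Cref{lem:ValidEdgeCharacterization}, respectively. This is exactly the paper's remark that an OSOS with two reticulations has only invalid arcs. The ``case 4 configuration with $u$'s parent and $v$'s child'' you end up describing is this same four-vertex structure. Its source and sink are the parent of $u$ and the child of $v$, so it is a two-reticulation OSOS, not a one-reticulation one.

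You leave the persistence invariant as an expectation, and the paper's proof also treats it as evident. It has a short proof that you should supply.

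\emph{Valid arcs outside $S$.} No valid arc has exactly one endpoint in $S$. Internal vertices of $S$ have all their neighbours in $S$. The arc entering the tree vertex $u$ is a tree arc. An arc from the reticulation $v$ into a reticulation is invalid by case 1. So a valid deletion outside $S$ never suppresses $u$ or $v$, and no ``transfer to a neighbour'' is needed. Deleting an arc and suppressing vertices only destroys paths, so both domination conditions survive.

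\emph{Valid arcs $xy$ inside $S$.} If $x\neq u$ and $y\neq v$, the pair $u,v$ still induces an OSOS. If $x=u$, the other child of $u$ must be a tree vertex, and it becomes the new source. The reason is that each reticulation child of $u$ has its second parent below the other child of $u$. Hence two reticulation children, or a reticulation child alongside $v$, would create a directed cycle. If $y=v$, validity makes $x$ a tree vertex. The other parent of $v$ must then be a reticulation, and it becomes the new sink. Otherwise the second child of each of the two parents would reach $v$ through the other parent, again giving a cycle.

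In every case the new OSOS has vertex set $V(S)\setminus\{x,y\}$, hence exactly $r_S-1$ reticulations. Together with the corrected base case, this shows that every valid deletion leaves some OSOS, and hence a reticulation. So a tree is never reached, and your argument then coincides with the paper's.
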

\begin{proof}
    Suppose first that~$N$ contains an OSOS-subgraph~$S$ with~$r$ reticulations.
    By \Cref{lem:OSOShas3r-1arcs}, $S$ contains~$3r-1$ arcs. After deleting~$r-1$ valid arcs (if such a sequence of valid arcs exist) from~$S$, since each valid arc deletion reduces the number of arcs by~$3$, the resulting OSOS-subgraph contains~$3r-1 - 3(r-1) = 2$ arcs. 
    An OSOS-subgraph with only two arcs must contain parallel arcs, which implies that after a sequence of $r-2$ arc deletions from an OSOS-subgraph (should such a sequence exist), we obtain an OSOS-subgraph that contains only invalid arcs.
    Note here that we write~$r-2$ since an OSOS-subgraph with~$2$ reticulations contain only invalid arcs.
    It follows then that~$A^*_{\cT}(N)$ cannot be finite.

    Suppose now that~$N$ is OSOS-free. By \Cref{lem:OSOS-freeArcDeletion}, there exists a valid arc such that deleting it results in a network that is OSOS-free.
    Repeatedly deleting a valid arc gives a sequence of OSOS-free networks that must terminate at a tree, as~$N$ is a finite graph.
\end{proof}

For trees, tree-child, and orchard networks, we have the OSOS characterization for determining whether $A^*_{\cC}(N)$ is finite, which follows directly from~\Cref{lem:At_Aor_finite}, \Cref{thm:FiniteLTiffOSOS-free}, and \Cref{obs:A*ClassIneq}. 
\ym{\begin{corollary}\label{cor:OSOSfImpliesFinite}
    Let~$N$ be a network. Let~$\cC\in \{\cT,\cTC,\cOr,\cTB\}$. If~$N$ is OSOS-free, then $A^*_{\cC}(N)$ is finite.
\end{corollary}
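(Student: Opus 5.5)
The plan is a direct chain through the results already established. Suppose $N$ is OSOS-free. First, \Cref{thm:FiniteLTiffOSOS-free} gives that $A^*_{\cT}(N)$ is finite. In particular, there is a sequence of valid arc deletions transforming $N$ into a tree, and by the remark after \Cref{lem:At_Aor_finite} this sequence has length equal to the number of reticulations of~$N$.

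Next, we pass from trees to the other classes using the class containments. Every tree is tree-child. By \Cref{lem:NetworkClassContainment}, every tree-child network is orchard and every orchard network is tree-based. Hence the same sequence of valid arc deletions witnesses membership in each $\cC\in\{\cTC,\cOr,\cTB\}$, which gives an upper bound on $A^*_{\cC}(N)$. Equivalently, \Cref{obs:A*ClassIneq} gives
\[A^*_{\cTB}(N)\le A^*_{\cOr}(N)\le A^*_{\cTC}(N)\le A^*_{\cT}(N)<\infty,\]
so each measure is finite. For the orchard case one could alternatively cite \Cref{lem:At_Aor_finite} directly, but this is unnecessary given the chain of inequalities.

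There is essentially no obstacle. All the substantive work, namely producing a valid arc whose deletion preserves OSOS-freeness (\Cref{lem:OSOS-freeArcDeletion}) and iterating it down to a tree, is already contained in \Cref{thm:FiniteLTiffOSOS-free}.

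The only point needing care is the convention in \Cref{def:val_arc_del}. If $N$ already belongs to $\cC$, then $A^*_{\cC}(N)=0$, which is trivially finite. Otherwise, the tree-reaching sequence above is a finite sequence of valid arc deletions ending in a member of $\cC$, so the value $\infty$ is never assigned.
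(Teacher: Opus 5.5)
Your proposal is correct and follows the same route as the paper, which derives the corollary directly from \Cref{thm:FiniteLTiffOSOS-free} (OSOS-free implies $A^*_{\cT}(N)<\infty$) combined with the chain of inequalities in \Cref{obs:A*ClassIneq}. Your extra remarks on the convention $A^*_{\cC}(N)=0$ for $N\in\cC$ and on the optional use of \Cref{lem:At_Aor_finite} are accurate but not needed.
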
}

For the classes of tree-child and orchard, we also have the converse.

\begin{corollary}\label{cor:FiniteImpliesOSOSTCO}
    Let~$N$ be a network. Let~$\cC\in \{\cT,\cTC,\cOr\}$. 
    If~$A^*_{\cC}(N)$ is finite, then~$N$ is OSOS-free.
\end{corollary}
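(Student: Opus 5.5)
The plan is to reduce everything to the tree case and then invoke \Cref{thm:FiniteLTiffOSOS-free}. Fix $\cC\in\{\cT,\cTC,\cOr\}$ and suppose $A^*_{\cC}(N)$ is finite.

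By \Cref{obs:A*ClassIneq} we have $A^*_{\cOr}(N)\le A^*_{\cTC}(N)\le A^*_{\cT}(N)$. Hence finiteness of $A^*_{\cC}(N)$ for any of the three classes implies finiteness of $A^*_{\cOr}(N)$. This is immediate for $\cC=\cOr$, and for $\cC=\cT$ or $\cTC$ it follows because the smaller quantity is bounded by a finite one.

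Next, \Cref{lem:At_Aor_finite} states that $A^*_{\cOr}(N)$ is finite if and only if $A^*_{\cT}(N)$ is finite, so $A^*_{\cT}(N)$ is finite. Finally, \Cref{thm:FiniteLTiffOSOS-free} says $A^*_{\cT}(N)$ is finite exactly when $N$ is OSOS-free, which is the claim.

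There is essentially no obstacle here: the substantive work is already contained in \Cref{lem:OrchHasFiniteA*T}, \Cref{lem:At_Aor_finite} and \Cref{thm:FiniteLTiffOSOS-free}. The only point to note is the exclusion of $\cTB$. No analogue of \Cref{lem:OrchHasFiniteA*T} is available for tree-based networks; in fact \Cref{subfig:osos_tb} suggests an OSOS-subgraph can sit inside a tree-based network. So the chain of implications must pass through $\cOr$, and the proof should say explicitly that this is why the corollary is restricted to $\{\cT,\cTC,\cOr\}$.
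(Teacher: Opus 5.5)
Your argument is correct, but it is not the route the paper's own proof takes. The paper treats the classes separately. For $\cT$ it cites \Cref{thm:FiniteLTiffOSOS-free}, as you do. For $\cTC$ and $\cOr$ it argues directly that an OSOS-subgraph obstructs membership in the class: it contains an omnian, so the network is not tree-child, and it forces every cherry cover to be cyclic, so the network is not orchard. It then notes that valid arc deletions can never eliminate an OSOS-subgraph, since one eventually reaches an OSOS-subgraph in which every reticulation arc is invalid, so the class is never reached.

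You instead funnel all three cases through the implication $A^*_{\cOr}(N)<\infty \Rightarrow A^*_{\cT}(N)<\infty$ (\Cref{lem:At_Aor_finite}, resting on \Cref{lem:OrchHasFiniteA*T}) and then apply the tree case. This is the chain the paper itself alludes to in the sentence just before \Cref{cor:OSOSfImpliesFinite}.

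Your version is shorter and uses only results already proved. It avoids the paper's unproved ``easy to check'' claims that an OSOS-subgraph yields an omnian and a cyclic cherry cover; these fall out as consequences, since an orchard network has $A^*_{\cOr}=0$ and is therefore OSOS-free. The persistence of OSOS-subgraphs under valid deletions is needed in both versions; in yours it is packaged inside \Cref{thm:FiniteLTiffOSOS-free}. The cost is that you rely on the induction in \Cref{lem:OrchHasFiniteA*T}.

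You also get less structural insight. The paper's argument exhibits the OSOS-subgraph as a deletion-persistent forbidden substructure for $\cTC$ and $\cOr$, which makes the exclusion of $\cTB$ transparent: an OSOS-subgraph can sit inside a tree-based network (\Cref{subfig:osos_tb}). Your explanation, that no analogue of \Cref{lem:OrchHasFiniteA*T} exists for $\cTB$, amounts to the same point.
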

\begin{proof}
    For trees, the claim follows from \Cref{thm:FiniteLTiffOSOS-free}.
    If~$N$ contains an OSOS-subgraph, then it must contain an omnian, so it cannot be tree-child.
    If~$N$ contains an OSOS-subgraph, then it is easy to check that any cherry cover is cyclic (there is no way to reduce an OSOS-subgraph by cherry-picking).
    Reducing OSOS-subgraphs using vaid arc deletions is impossible.
    In the process, one obtains an OSOS-subgraph with two reticulations (or more), wherein all reticulation arcs are invalid.
\end{proof}

\ym{Note that the converse of \Cref{cor:OSOSfImpliesFinite} for tree-based networks is not always true. 
Networks containing OSOS-subgraphs can still have a finite~$A^*_{\cTB}$ score (see \Cref{subfig:osos_tb}).
This is because in general, networks with an OSOS-subgraph can be tree-based.
Of course, there are also numerous networks that contain OSOS-subgraphs which are non-tree-based, which have undefined~$A^*_{\cTB}$ scores (see~\Cref{fig:tulip_field}).}

\begin{figure}
    \centering
    \subfloat[]{\includegraphics[height=6cm]{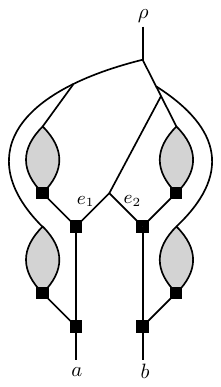} \label{subfig:tulip_N}}
    \hspace{2mm}
    \subfloat[]{\includegraphics[height=6cm]{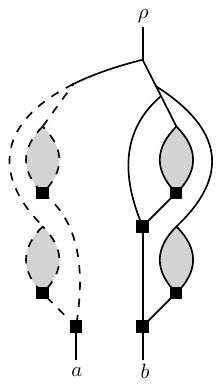} \label{subfig:tulip_del}}
    \caption{(a) A non-tree-based network~$N$ where~$A^*_{\cTB}(N)$ is undefined.  
    The gray oval shapes are used to illustrate arbitrary OSOS-subgraphs. 
    The only valid arcs are~$e_1$ and~$e_2$. (b) The network obtained by deleting~$e_1$. This graph contains an OSOS-subgraph (the dashed subgraph).}
    \label{fig:tulip_field}
\end{figure}

\paragraph{Complexity.}
Next, we show that determining if a network is OSOS-free can be done in polynomial time.
A \emph{dominator tree} of a given network is a directed tree on the same set of vertices as~$N$, with an arc~$uv$ if~$u$ is an immediate dominator of~$v$. 
Such a tree can be obtained with the \emph{Lengauer-Tarjan algorithm (LT)}, which takes~$O(|E|\log(|V|))$ time for an input DAG~$(V,E)$ with a single source \cite{lengauer1979fast}.
We start with the pseudo-code.
\medskip

\begin{algorithm}[H]
 \KwData{A network~$N$ on a set of leaves~$X$.}
 \KwResult{The set of all vertex pairs that induce an OSOS-subgraph in $N$.}
 Initialize a set $T:=\emptyset$\;
 Compute the dominator tree of~$N$ using LT. Call this tree~$T^{Dom}_N$\;\label{line:DomN}
 Reverse arc directions in~$N$, add vertex~$s$, and add arcs~$sx$ for every~$x\in X$. Call this DAG~$N_R$\;
 Compute the dominator tree of~$N_R$ using LT. Call this tree~$T^{Dom}_{N_R}$\;\label{line:DomNR}
 \ForEach(){tree vertex~$x$ in~$N$\label{line:xtree}}{
 \If{$x$ has an outgoing reticulation neighbour~$y$ in~$T^{Dom}_N$\label{line:xyEdge}}{
 \If{$x$ is an outgoing neighbour of~$y$ in~$T^{Dom}_{N_R}$\label{line:yxEdge}}{
 $T = T\cup \{xy\}$\;
 }
 }
 } 
 \Return $T$\;
\caption{{\sc FindOSOS}$(N)$}\label{alg:FindOSOS}
\end{algorithm}

\begin{theorem}\label{thm:FindOSOSCorrect}
Let~$N$ be a network on~$n$ vertices. Then~{\sc FindOSOS}$(N)$ is the set of all vertex pairs that induce an OSOS-subgraph in~$N$.
In other words, \Cref{alg:FindOSOS} is correct.
The algorithm runs in~$O(n\log n)$ time.
\end{theorem}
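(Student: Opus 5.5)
The proof splits into correctness, shown as two inclusions, and a running-time bound.

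The key translation is this. In $N$, a vertex $u$ dominates $v$ precisely when every root-to-$v$ path contains $u$. In the reversed graph $N_R$, with its new source $s$ attached to every leaf, a vertex $v$ dominates $u$ precisely when every path from $s$ to $u$ in $N_R$ passes through $v$. Such paths correspond bijectively to paths from $u$ to a leaf in $N$. Hence the two bullet conditions in the definition of an OSOS-subgraph induced by $uv$ say exactly that $u$ dominates $v$ in $N$ and $v$ dominates $u$ in $N_R$.

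Minimality is encoded by immediacy on both sides. Suppose $uv$ induces a (minimal) OSOS and some $u'\neq u$ is a dominator of $v$ lying below $u$. Then every path from $u'$ to a leaf either already contains $v$ or does not; I will argue it does. Every path from $u$ through $u'$ to a leaf contains $v$, and $u'$ is reachable from $u$ because $u'$ lies on every root-to-$v$ path and these pass through $u$. So $u'v$ satisfies both bullet conditions. If $u'$ is a tree vertex, this contradicts the minimality of $u$. If $u'$ is a reticulation, I instead move to the tree vertex directly below it; a reticulation dominator has a single child, which then also dominates $v$. Thus $u$ is the immediate dominator of $v$ in $N$. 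The symmetric argument in $N_R$ shows $v$ is the immediate dominator of $u$ in $N_R$. Together with $u$ being a tree vertex and $v$ a reticulation, this places $xy=uv$ into $T$ at lines~\ref{line:xtree}--\ref{line:yxEdge}.

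Conversely, if the algorithm adds $xy$, then $x$ is a tree vertex, $y$ is a reticulation, $x$ dominates $y$ in $N$, and $y$ dominates $x$ in $N_R$. By the translation above, $xy$ induces an OSOS-subgraph. I then have to show it is the minimal one. Suppose some lower tree vertex $x'$ with $x'y$ satisfying both conditions existed. Then $x'$ would dominate $y$ and lie strictly below $x$. In the dominator tree, the dominators of $y$ form a chain, so $x$ dominates $x'$, contradicting that $x$ immediately dominates $y$.

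This minimality bookkeeping is the step I expect to be most delicate: relating ``lowest vertex satisfying the two conditions'' to ``immediate dominator'' in both directions, including the case of reticulation dominators in between. I would handle it with the chain structure of dominators, namely that the dominators of a vertex are totally ordered along every root path, together with the observation that a dominator of $y$ that is a reticulation passes domination to its unique child.

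For the running time, each call to the Lengauer--Tarjan algorithm costs $O(|E|\log|V|)$. The network $N$ has $O(n)$ arcs since all degrees are at most $3$, and $N_R$ adds only $|X|\le n$ arcs, so lines~\ref{line:DomN} and~\ref{line:DomNR} take $O(n\log n)$ time. Building $N_R$ is linear. In the loop, each tree vertex is handled by checking its children in $T^{Dom}_N$ and the single parent of each such child $y$ in $T^{Dom}_{N_R}$. The total work is bounded by the number of arcs of the dominator tree, which is $O(n)$. The total running time is therefore $O(n\log n)$.
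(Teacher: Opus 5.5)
Your proposal is correct and follows essentially the same route as the paper. Both rewrite the two OSOS conditions as ``$u$ dominates $v$ in $N$'' and ``$v$ dominates $u$ in $N_R$'', identify minimality with immediate dominance on both sides, and bound the time by the two Lengauer--Tarjan calls plus a linear pass. You also justify the minimality/immediacy equivalence that the paper only asserts. Two small fixes: descending from a reticulation dominator may pass through several reticulations before reaching a tree vertex (the child is never $v$ itself, by acyclicity). And the second test in the loop concerns the parent of $x$, not of $y$, in $T^{Dom}_{N_R}$.
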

\begin{proof}
    We first show correctness of \Cref{alg:FindOSOS}. To do so, we show that a pair of vertices~$u,v$ induces an OSOS-subgraph in~$N$ if and only if~$uv\in\text{{\sc FindOSOS}}(N)$.
    We use the notation of \Cref{alg:FindOSOS}. 
    Let~$N_R$ denote the DAG obtained by reversing all arc directions in~$N$, adding a vertex~$s$, and adding arcs~$sx$ for every~$x\in X$.
    We observe that~$u,v$ induces an OSOS-subgraph in~$N$ if and only if 
    \begin{itemize}
        \item $u$ is a tree vertex and~$v$ a reticulation in~$N$, and
        \item every path from the root to~$v$ in~$N$ must contain~$u$, and
        \item every path from~$u$ to a vertex in~$X$ in~$N$ must contain~$v$, and
        \item $u$ is the lowest vertex that satisfies the above two conditions.
    \end{itemize}
    This is true if and only if
    \begin{itemize}
        \item $u$ is a tree vertex and~$v$ is a reticulation in~$N$, and
        \item $u$ immediately dominates~$v$ in~$N$ and $v$ immediately dominates~$u$ in~$N_R$.
    \end{itemize}
    In the notation of the pseudocode, this is true if and only if 
    \begin{itemize}
        \item $u$ is a tree vertex (\Cref{line:xtree}) and~$v$ is a reticulation (\Cref{line:yxEdge}) in~$N$, and
        \item $v$ is an outgoing neighbour of~$u$ in~$T_N^{Dom}$ (\Cref{line:xyEdge}), and
        \item $u$ is an outgoing neighbour of~$v$ in~$T_{N_R}^{Dom}$ (\Cref{line:yxEdge}).
    \end{itemize} 
    Dominators in acyclic directed graphs can be obtained using the Lengauer-Tarjan algorithm.
    Through the chain of equivalent statements, the claim follows immediately.
    \medskip
    
    To see the running time, observe that in a binary network, $|E| = O(|V|)$, and so the Lengauer-Tarjan algorithm runs in~$O(n\log n)$ time.
    The algorithm is invoked separately on~\Cref{line:DomN,line:DomNR}.
    The for loop of \Cref{line:xtree} is iterated at most~$O(n)$ times, and \Cref{line:xyEdge,line:yxEdge} can be done in constant time.
    Thus the algorithm runs in~$O(n\log n)$ time.
\end{proof}

\section{Hardness Proofs for Orchard Networks}
\label{sec:Hardness}

\subsection{Leaf Addition Measure}\label{subsec:LeafAdditionHard}
In this section, we show that computing~$L_\cOr(N)$ is NP-hard by reducing from  
degree-3 vertex cover.

\medskip
\noindent
\begin{center}
\fbox{\parbox{0.9\linewidth}{
{\sc \dpVC{} (Decision)}\\
{\bf Input:} A 3-regular graph $G = (V, E)$ and a natural number $k$.\\
{\bf Decide:} Does G have a vertex cover with at most $k$ vertices?}}
\end{center}
\noindent
\begin{center}
\fbox{
\parbox{0.9\linewidth}{
{\sc \dpLD{} (Decision)}\\
{\bf Input:} A network $N$ on a set of taxa $X$ and a natural number $k$.\\
{\bf Decide:} Can $N$ be made orchard with at most $k$ leaf additions?}
}
\end{center}

\begin{figure}
    \centering    
    \includegraphics[width=\columnwidth]{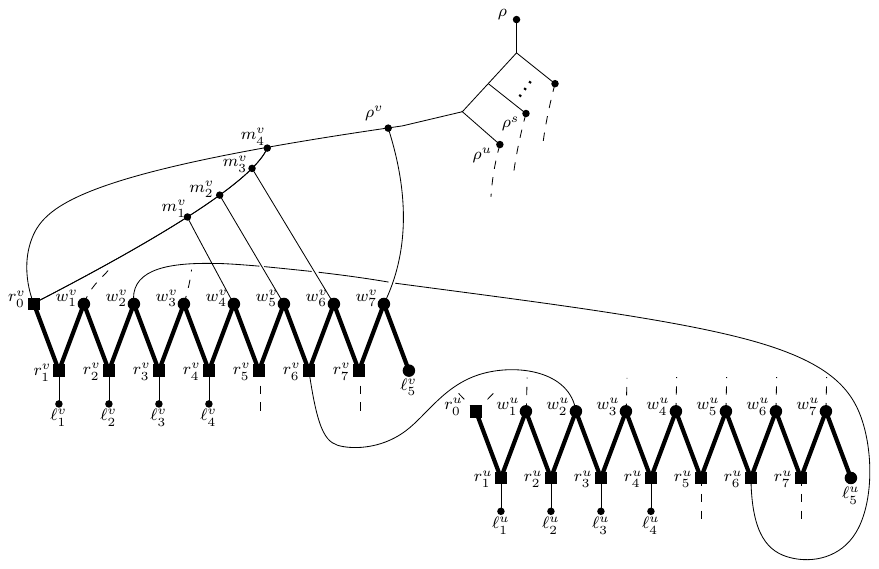}
    \caption{Sketch of the network~$N_G$ for the case when~$G$ contains an arc~$uv$.} 
    \label{fig:VCtoLORDreduction}
\end{figure}

\medskip
We now describe the reduction from \dpVC{} to \dpLD{}. For a graph~$G$, let~$V(G)$ and~$E(G)$ be its vertex and arc sets, respectively. Given an instance~$(G,k)$ of \dpVC, construct an instance~$(N_G,k)$ of \dpLD{} as follows (see \Cref{fig:VCtoLORDreduction}):

\begin{enumerate}
    \item For each vertex~$v$ in $V(G)$, construct a gadget~$\Gad(v)$ as described below.
    In what follows, vertices of the form $\ell_i^v$ are leaves, vertices $r_i^v$ are reticulations, and vertices $w_i^v$, $m_i^v$ and $\rho^v$ are tree vertices.
    
    The key structure in~$\Gad(v)$ is an N-fence with $15$ arcs, starting with the arc 
    $\boldsymbol{r_0^vr_1^v}$, then followed by arcs of the form $\boldsymbol{w_i^vr_i^v, w_i^vr_{i+1}^v}$ for each $i\in[6]$, and finally the arcs $\boldsymbol{w_7^vr_7^v,w_7^v\ell_5^v}$. This set of arcs, in bold type, is called the \emph{principal part} of~$\Gad(v)$. 
    In addition,the reticulations $r_1^v, r_2^v,r_3^v,r_4^v$ have leaf children $\ell_1^v,\ell_2^v, \ell_3^v, \ell_4^v$ respectively.
    
    Above the principal part of~$\Gad(v)$, add a set of tree vertices $m_1^v,m_2^v,m_3^v,m_4^v,\rho^v$ with the following children: $m_1^v$ has children $r_0^v$ and $w_4^v$, $m_2^v$ has children $m_1^v$ and $w_5^v$, $m_3^v$ has children $m_2^v$ and $w_6^v$, $m_4^v$ has children $m_3^v$ and $r_0^v$, and $\rho^v$ has children $m_4^v$ and $w_7^v$ (see \Cref{fig:VCtoLORDreduction}).
    
    This completes the construction of~$\Gad(v)$. Note that so far, the vertices $w_1^v,w_2^v,w_3^v$ have no incoming arcs, and $r_5^v,r_6^v,r_7^v$ have no outgoing arcs. Such arcs will be added later to connected different gadgets together.

    \item Connect the vertices $\rho^v$ from each~$\Gad(v)$ as follows: take some ordering of the vertices $\{v_1,\ldots, v_g\}$ of~$G$. Add a vertex~$\rho$ and vertices~$s_i$ for~$i\in [g-1]$. Add arcs~$\rho s_1$ and also arcs from the set~$\{s_is_{i+1}: i\in [g-2]\}$, as well as arcs from the set~$\{s_i\rho^{v_i}: i\in [g-1]\}$, and finally an arc~$s_{g-1}\rho^{v_g}$.
    
    \item Next add arcs between the gadgets corresponding to adjacent vertices in $G$, in the following way:
    for every pair of adjacent vertices $u,v$ in $G$, add an arc connecting one of the vertices $r_5^u,r_6^u,r_7^u$ in $\Gad(u)$ to one of the vertices $w_1^v,w_2^v,w_3^v$ in $\Gad(v)$  (and, symmetrically, an arc connecting one of $r_5^v,r_6^v,r_7^v$ to one of $w_1^u,w_2^u,w_3^u$). The exact choice of vertices connected by an arc does not matter, except that we should ensure each vertex is used by such an arc exactly once.
    Formally: for each vertex $v$ in $G$ with neighbours $a,b,c$, fix two (arbitrary) bijections $\pi_v:\{a,b,c\} \rightarrow \{1,2,3\}$ and $\tau_v:\{a,b,c\}\rightarrow \{5,6,7\}$. Then for each pair of adjacent vertices $u,v$ in $G$, add an arc from $r_{\tau_u(v)}^u$ to $w_{\pi_v(u)}^v$ (and, symmetrically, add an arc from $r_{\tau_v(u)}^v$ to $w_{\pi_u(v)}^u$).

    \item Finally, for each vertex~$v$ in~$G$, label the vertices~$\{\ell_i^v: i\in[5]\}$ in~$\Gad(v)$ by~$\ell_i^v$.
\end{enumerate}

Call the resulting graph~$N_G$.

\begin{lemma}\label{lem:N_Gisnetwork}
    The graph $N_G$ is a phylogenetic network on the leaf set~$\{\ell_i^v:i\in[5] \text{, $v\in V(G)$}\}.$
\end{lemma}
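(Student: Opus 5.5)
To prove \Cref{lem:N_Gisnetwork} we have to check two things: every vertex of $N_G$ has the in/out-degree demanded by its role (root, tree vertex, reticulation or leaf), and $N_G$ is acyclic. The leaf labelling is bijective by construction, since the labels $\ell_i^v$ are distinct.

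\emph{Degrees.} We go through the vertex types of a single gadget $\Gad(v)$ and add the inter-gadget arcs of step 3.
\begin{itemize}
\item $\rho$ has indegree $0$ and the single child $s_1$. Each $s_i$ has one parent ($\rho$ or $s_{i-1}$) and two children ($s_{i+1}$ and $\rho^{v_i}$, or, for $i=g-1$, the vertices $\rho^{v_{g-1}}$ and $\rho^{v_g}$). Each $\rho^v$ has one parent and the children $m_4^v, w_7^v$.
\item $m_4^v$ has parent $\rho^v$; $m_3^v$ has parent $m_4^v$; $m_2^v$ has parent $m_3^v$; $m_1^v$ has parent $m_2^v$. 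Each has two children as listed.
\item $r_0^v$ has the parents $m_1^v, m_4^v$ and the single child $r_1^v$.
\item $r_1^v$ has the parents $r_0^v, w_1^v$. For $2\le i\le 7$, $r_i^v$ has the parents $w_{i-1}^v, w_i^v$.
\item $r_1^v,\dots,r_4^v$ have leaf children $\ell_1^v,\dots,\ell_4^v$. Each of $r_5^v,r_6^v,r_7^v$ gets exactly one child, namely the one outgoing arc of step 3, because $\tau_v$ is a bijection onto $\{5,6,7\}$.
\item $w_4^v,w_5^v,w_6^v,w_7^v$ have the parents $m_1^v,m_2^v,m_3^v,\rho^v$ respectively. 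Each of $w_1^v,w_2^v,w_3^v$ gets exactly one parent from step 3, because $\pi_v$ is a bijection onto $\{1,2,3\}$.
\item Every $w_i^v$ has two children, $r_i^v$ and $r_{i+1}^v$; for $i=7$ these are $r_7^v$ and $\ell_5^v$.
\item The leaves $\ell_1^v,\dots,\ell_5^v$ each have exactly one parent.
\end{itemize}
We also check that there are no parallel arcs. Every arc is listed once, and in particular $m_1^v$ and $m_4^v$ are distinct parents of $r_0^v$.

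\emph{Acyclicity.} This is the main obstacle, because step 3 adds arcs between gadgets in both directions (from $r^u_{\tau_u(v)}$ to $\Gad(v)$ and from $r^v_{\tau_v(u)}$ to $\Gad(u)$). So we cannot simply order the gadgets. Instead we define a height function $h$ with $h(x)<h(y)$ for every arc $xy$, which rules out directed cycles.

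The key structural fact is that every inter-gadget arc goes from a vertex in $\{r_5,r_6,r_7\}$ to a vertex in $\{w_1,w_2,w_3\}$. So we split each gadget into layers:
\begin{itemize}
\item an \emph{upper part} consisting of $\rho^v, m_4^v,\dots,m_1^v, r_0^v, w_4^v,\dots,w_7^v, r_5^v,r_6^v,r_7^v$, and $\ell_5^v$;
\item a \emph{lower part} consisting of $w_1^v,w_2^v,w_3^v, r_1^v,\dots,r_4^v$, and $\ell_1^v,\dots,\ell_4^v$.
\end{itemize}

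Next we check which arcs cross between the two parts.
\begin{itemize}
\item The arcs into lower vertices from outside the lower part are: $r_0^vr_1^v$ (upper to lower), $w_4^vr_4^v$ (upper to lower), and the step-3 arcs into $w_1^v,w_2^v,w_3^v$ (from an upper part of another gadget).
\item No arc goes from a lower vertex into an upper vertex. The children of $w_1^v,w_2^v,w_3^v$ are among $r_1^v,\dots,r_4^v$, and $r_1^v,\dots,r_4^v$ point only to leaves.
\item Within the upper part, the arcs $w_5^vr_5^v$, $w_5^vr_6^v$, $w_6^vr_6^v$, $w_6^vr_7^v$, $w_7^vr_7^v$ all point downward in the fixed order $\rho^v, m_4^v, m_3^v, m_2^v, m_1^v, r_0^v, w_4^v, w_5^v, w_6^v, w_7^v, r_5^v, r_6^v, r_7^v$. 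Likewise $w_4^vr_5^v$ is consistent with this order, since $w_4^v$ precedes $r_5^v$.
\item Within the lower part, the arcs go from $w_1^v,w_2^v,w_3^v$ to $r_1^v,\dots,r_4^v$ and then to the leaves, so the order $w_1^v,w_2^v,w_3^v,r_1^v,\dots,r_4^v$, leaves, is consistent.
\end{itemize}

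We now assign heights:
\begin{itemize}
\item $\rho$ and the $s_i$ get heights $0,1,\dots$ along the spine;
\item all upper parts get heights above the spine, using the internal upper order within each gadget;
\item all lower parts get heights above every upper height, using the internal lower order within each gadget.
\end{itemize}
Every arc then increases height: arcs within a part follow its internal order, arcs from upper to lower go up in height, and inter-gadget arcs go from an upper part to a lower part. Hence $N_G$ is a DAG with a unique root $\rho$, so it is a binary phylogenetic network on the stated leaf set.
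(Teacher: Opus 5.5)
Your proposal is correct and follows essentially the same route as the paper. Your upper and lower parts are exactly the paper's sets $V_1(\Gad(v))$ and $V_2(\Gad(v))$, and your height function is the paper's topological order (spine, then all upper parts, then all lower parts); your explicit degree and parallel-arc check is more thorough than the paper, which only asserts that all internal vertices are tree vertices or reticulations.
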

\begin{proof}
It is easy to see that~$N_G$ is directed. To see that it is acyclic, we give a topological order on the vertices.

For each vertex~$v\in V(G)$, we partition and order the vertices of $\Gad(v)$ in~$N_G$ as follows.
We shall write $V_1(\Gad(v)) = (\rho^{v}, m^{v}_4, m^{v}_3, m^{v}_2, m^{v}_1, r^{v}_0, w^{v}_4, w^{v}_5, w^{v}_6, w^{v}_7, r^{v}_5, r^{v}_6, r^{v}_7, \ell^{v}_5)$
and 
$V_2(\Gad(v)) = (w^{v}_1, w^{v}_2, w^{v}_3, r^{v}_1, r^{v}_2, r^{v}_3, r^{v}_4, \ell^{v}_1, \ell^{v}_2, \ell^{v}_3, \ell^{v}_4)$.
Locally, it is easy to see that both vertex sets $V_1(\Gad(v))$ and $V_2(\Gad(v))$ give partial orders.
We give another partial order on the spine of the tree that connects the root of all gadgets. We write $S = (\rho, s_1, s_2, \ldots, s_{g-1})$. 
Note also that $S$ gives a partial order.

We are now ready to concatenate sequences to give a topological ordering on~$N_G$.
Given two sequences~$x=(x_1,\ldots, x_a)$ and~$y = (y_1,\ldots, y_b)$, we write $x\frown y = (x_1,\ldots, x_a, y_1,\ldots, y_b)$ to denote its concatenation.
We claim that $s\frown V_1(\Gad(v_1))\frown \cdots \frown V_1(\Gad(v_g)) \frown V_2(\Gad(v_1))\frown \cdots \frown V_2(\Gad(v_g))$ is a topological ordering on $N_G$.
Indeed, by construction, we observe that no vertices of other gadgets are above any vertices of $V_1(\Gad(v))$, for any~$v\in V(G)$. 
The only vertices above this part of the gadget are included in the sequence~$S$.
In addition, no vertices of~$V_2(\Gad(v))$ are above vertices of other gadgets, for all~$v\in V(G)$.
Therefore, $N_G$ is acyclic, and thus it is a DAG
with a single root~$\rho$. 
All leaves are labelled, and all internal vertices are either tree vertices or reticulations.
Therefore it is a network on the leaf set~$\{\ell_i^v:i\in[5] \text{, $v\in V(G)$}\}.$
\end{proof}

\markj{As the arcs of~$N_G$ are decomposed into M-fences and N-fences, we have the following observation.}

\begin{observation}\label{obs:N_GisTB}
    Let~$G$ be a 3-regular graph and let~$N_G$ be the network obtained by the reduction. Then~$N_G$ is tree-based.
\end{observation}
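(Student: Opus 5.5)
The plan is to apply \Cref{lem:TB=NoW}: $N_G$ is tree-based if and only if its unique zig-zag decomposition (\Cref{thm:UniqueMaxZigZag}) contains no W-fences. Rather than argue about M- and N-fences directly, it suffices to show that no maximal zig-zag trail of $N_G$ begins or ends with an arc whose tail is a reticulation, unless that trail is an N-fence. A W-fence needs both end arcs to have reticulation tails, so if every arc with a reticulation tail lies in an N-fence (which has only one such end), there is no W-fence.

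First I will list the arcs of $N_G$ whose tail is a reticulation. Every reticulation has exactly one outgoing arc, so it is enough to go through the reticulations.
\begin{itemize}
\item In $\Gad(v)$, $r_0^v$ has child $r_1^v$.
\item $r_1^v,\dots,r_4^v$ have leaf children $\ell_1^v,\dots,\ell_4^v$.
\item $r_5^v,r_6^v,r_7^v$ each have a child $w^u_j$ in a neighbouring gadget, added in step 3.
\end{itemize}
The remaining vertices $w_i^v, m_i^v, \rho^v, s_i, \rho$ are tree vertices or the root. Each leaf arc $r_i^v\ell_i^v$ ($i\in[4]$) has a head of indegree 1 and a tail of outdegree 1. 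Hence it forms its own maximal zig-zag trail of length 1, an N-fence.

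Next I will trace the trail containing $r_0^vr_1^v$. By construction, the principal part is the N-fence $r_0^vr_1^v, w_1^vr_1^v, w_1^vr_2^v,\dots, w_7^vr_7^v, w_7^v\ell_5^v$. I must check that it is maximal.
\begin{itemize}
\item Its first arc has a reticulation tail of outdegree 1, so the trail cannot be extended at that end through the tail.
\item Both parents of $r_1^v$ are already in the trail.
\item At the other end, $\ell_5^v$ is a leaf, so the trail cannot be extended through the head of $w_7^v\ell_5^v$.
\item Each $w_i^v$ has both out-arcs in the trail.
\item Each $r_i^v$ with $2\le i\le 7$ has both in-arcs in the trail: $w_{i-1}^vr_i^v$ and $w_i^vr_i^v$.
\end{itemize}
So the trail is maximal, with odd length 15, a reticulation tail at one end and a tree-vertex tail at the other. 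It is therefore an N-fence.

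The one thing to double-check here is that the in-arcs of $r_0^v$, namely $m_1^vr_0^v$ and $m_4^vr_0^v$, are not part of this trail. They share the head $r_0^v$, not the head $r_1^v$, and the only arc from $r_0^v$ is the first arc. So they belong to a different trail, whose arcs all have tree-vertex tails $m_i^v$.

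Finally, the arcs $r^u_{\tau_u(v)}w^v_{\pi_v(u)}$ from step 3 have a tree-vertex head. Any zig-zag continuation must therefore go through their tail, but a reticulation has only one out-arc. So each such arc is a maximal trail of length 1 with a reticulation tail, an N-fence.

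Every arc with a reticulation tail now lies in an N-fence. The remaining trails consist only of arcs with tree-vertex tails (or the root arc), so they are M-fences or crowns. Hence there is no W-fence, and $N_G$ is tree-based by \Cref{lem:TB=NoW}.

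The main obstacle is the bookkeeping of maximality in the principal part: showing that in-arcs from outside, such as those from $m_i^v$ into $w_4^v,\dots,w_7^v$ or the step-3 arcs into $w_1^v,w_2^v,w_3^v$, do not join the same trail. They do not, because those arcs meet the principal part at the head $w_i^v$, which has indegree 1, so no zig-zag step is possible there.
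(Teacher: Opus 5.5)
Your proof is correct and follows the paper's argument. Via \Cref{lem:TB=NoW}, the principal part of each gadget is an N-fence, every arc leaving it (into $\ell_1^v,\dots,\ell_4^v$ or into a neighbouring gadget) is a length-$1$ N-fence, and all remaining arcs have tree-vertex tails, so no W-fence exists. Your bookkeeping is slightly more careful than the paper's: the paper says the remaining arcs form M-fences of length $2$, but $(m_1^vw_4^v, m_1^vr_0^v, m_4^vr_0^v, m_4^vm_3^v)$ is an M-fence of length $4$---a harmless slip that does not affect the conclusion.
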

\begin{proof}
    It is easy to check that the arcs of~$N_G$ are decomposed into M-fences and N-fences.
    \ym{Indeed,} the principal part of each gadget~$\Gad(v)$ is an N-fence; each arc leaving the principal part of a gadget~$\Gad(v)$ is an N-fence of length $1$; the remaining arcs decompose into M-fences of length $2$. By \Cref{lem:TB=NoW},~$N_G$ must be tree-based.  
\end{proof}

By \Cref{lem:N_Gisnetwork}, \Cref{obs:N_GisTB}, and \Cref{thm:TB=ChCover}, we use freely from now on that~$N_G$ has a cherry cover.
Before proving the main result, we require some notation and helper lemmas. 
Let~$N$ be a network and let~$\hat{N_i}$ be an N-fence of~$N$. In what follows, we shall write~$\hat{N_i} := (a^i_1,a^i_2,\ldots, a^i_{k_i})$, and we will let~$c^i_{2j-1}$ denote the child of~$\head(a^i_{2j-1})$ for~$j\in\left[\frac{k_i-1}{2}\right]$.
The first lemma states that although a tree-based network may have non-unique cherry covers, the reticulated cherry shapes that cover arcs of N-fences are fixed.

\begin{lemma} \label{lem:n_fence_unique}
    Let~$N$ be a tree-based network, and let~$\hat{N_1},\hat{N_2},\ldots, \hat{N_n}$ denote the N-fences of~$N$ of length at least 3. 
    Then every cherry cover of~$N$ must contain the reticulated cherry shapes $\{(\head(a^i_{2j-1}),c^i_{2j-1}), a^i_{2j},a^i_{2j+1}\}$ for~$i\in[n]$ and~$j\in\left[\frac{k_i-1}{2}\right]$.
\end{lemma}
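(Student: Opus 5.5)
The plan is to fix an N-fence $\hat{N_i}=(a^i_1,\ldots,a^i_{k_i})$ of length $k_i\ge 3$, with $\tail(a^i_1)$ a reticulation and $\tail(a^i_{k_i})$ a tree vertex, and follow how a cherry cover $P$ must treat its arcs, starting from the reticulation end. The first arc $a^i_1$ has a reticulation tail. A cherry shape has a tree-vertex internal vertex, so $a^i_1$ cannot lie in one. In a reticulated cherry shape $\{p_xx,p_yp_x,p_yy\}$ the arcs $p_yp_x$ and $p_yy$ leave the tree vertex $p_y$, so the only arc of such a shape that can leave a reticulation is $p_xx$. Hence $a^i_1$ is covered as the arc $p_xx$ of some reticulated cherry shape, with $p_x=\tail(a^i_1)$. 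This gives nothing about $\head(a^i_1)$ yet, so we move to the reticulation $h_1:=\head(a^i_1)$ and its unique outgoing arc $(h_1,c^i_1)$.

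Next we pin down the shape covering $(h_1,c^i_1)$. By the same argument, this arc leaves the reticulation $h_1$, so it is the arc $p_xx$ of a reticulated cherry shape with $p_x=h_1$. The middle arc of that shape is then $p_yh_1$, one of the two incoming arcs of $h_1$. We next show that this middle arc is $a^i_2$, not $a^i_1$. The tail of $a^i_1$ is a reticulation, whereas $p_y$ must be a tree vertex, so $p_y\ne\tail(a^i_1)$. By the zig-zag structure, $a^i_2$ is the other arc into $h_1$, with tail $u:=\tail(a^i_2)=\tail(a^i_3)$ a tree vertex. The third arc $p_yy$ is the other outgoing arc of $u$, which is $a^i_3$. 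So $\{(h_1,c^i_1),a^i_2,a^i_3\}\in P$, which is the case $j=1$.

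Then we induct along the fence. Suppose that for some $j$ the shape $\{(\head(a^i_{2j-1}),c^i_{2j-1}),a^i_{2j},a^i_{2j+1}\}$ is in $P$ and $2j+1<k_i$. The arc $a^i_{2j+1}$ enters the reticulation $h_{2j+1}:=\head(a^i_{2j+1})=\head(a^i_{2j+2})$, and its outgoing arc $(h_{2j+1},c^i_{2j+1})$ must again be the $p_xx$ arc of a reticulated cherry shape. Its middle arc is one of $a^i_{2j+1}$ and $a^i_{2j+2}$. It cannot be $a^i_{2j+1}$, since that arc is already covered and $P$ covers each arc exactly once. So the middle arc is $a^i_{2j+2}$, and the third arc is the sibling arc $a^i_{2j+3}$ out of the tree vertex $\tail(a^i_{2j+2})$. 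The induction stops at $j=(k_i-1)/2$, where $a^i_{k_i}$ is the last arc.

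The main care point is confirming that the heads $\head(a^i_{2j-1})$ really are reticulations and the tails $\tail(a^i_{2j})$ really are tree vertices, that is, the alternating pattern of an N-fence. This follows from maximality and the degree constraints: consecutive arcs share a head exactly when that head has indegree 2, and share a tail exactly when that tail has outdegree 2. Starting from a reticulation tail forces the first pair to share a head, and the parity then alternates. We also need the child arcs $(h,c)$ to lie outside the fence, which holds because they leave reticulations, so they belong to other zig-zag trails. This is a routine check against the definitions.
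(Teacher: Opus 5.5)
Your proof is correct and follows the paper's argument. The outgoing arc of each reticulation $\head(a^i_{2j-1})$ forces a reticulated cherry shape whose middle arc is an incoming arc of that reticulation: this is $a^i_2$ because $\tail(a^i_1)$ is a reticulation, and $a^i_{2j}$ for $j\ge 2$ because $a^i_{2j-1}$ is already covered, a step the paper leaves implicit in ``repeating this argument''.

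You should drop your closing claim that the arcs $(\head(a^i_{2j-1}),c^i_{2j-1})$ lie outside the fence. It is never used, and it is false in general: $\tail(a^i_1)$ may coincide with $\head(a^i_3)$, in which case $a^i_1$ itself is the outgoing arc of $\head(a^i_3)$.
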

\begin{proof}
    Let~$\hat{N_i} = (a^i_1,a^i_2,\ldots, a^i_{k_i})$ be an N-fence of length~$k_i\ge 3$.
    Observe that in every cherry cover, exactly one incoming arc of every reticulation is covered by a reticulated cherry shape as a middle arc (since the network is binary; for non-binary networks, this is not true in general~\cite{van2021unifying}).
    Since~$\head(a^i_1)$ is a reticulation, one of~$a^i_1$ or~$a^i_2$ must be in a reticulated cherry shape as a middle arc. But~$\tail(a^i_1)$ is a reticulation; therefore,~$a^i_2$ must be in a middle arc of a reticulated cherry shape.
    The other two arcs of the same reticulated cherry shapes are then fixed to be~$\head(a^i_1)c^i_1$ and~$a^i_3$. 
    Repeating this argument for the reticulations~$\head(a^i_{2j+1})$ for~$j\in\left[\frac{k_i-1}{2}\right]$ gives the required claim for the N-fence~$\hat{N_i}$; further repeating this argument for every N-fence gives the required claim.
\end{proof}

Note that the principal part of a gadget~$\Gad(v)$ for every~$v\in V(G)$ is an N-fence. Let us denote the principal part of a gadget~$\Gad(v)$ by~$(a^v_1,a^v_2,\ldots,a^v_{15})$ for all~$v\in V(G)$. By \Cref{lem:n_fence_unique}, $a^v_i$ for~$i=2,\ldots, 15$ and the outgoing arcs of~$r_i^v$ are covered in the same manner across all possible cherry covers of~$N_G$. Let us denote the reticulated cherry shape that contains~$a^v_i$ and~$a^v_{i+1}$ by~$R^v_{i/2}$ for even $i \in [15]$. 
\Cref{subfig:gadgets_cherry_cover,subfig:gadgets_cherry_cover_graph} show an example of the part of cherry cover auxiliary graph containing~$R^v_{i}$ and~$R^u_i$ for~$i\in[7]$, for some arc~$uv$ in $G$.
Note that the cherry shapes form a cycle.
The next lemma implies that in fact,  such a cycle exists for any arc~$uv$ in $G$.
The proof of the following is very similar to the proof of \Cref{lem:n_fence_unique}.

\begin{figure}
    \centering
    \subfloat[]{\includegraphics[height=4cm]{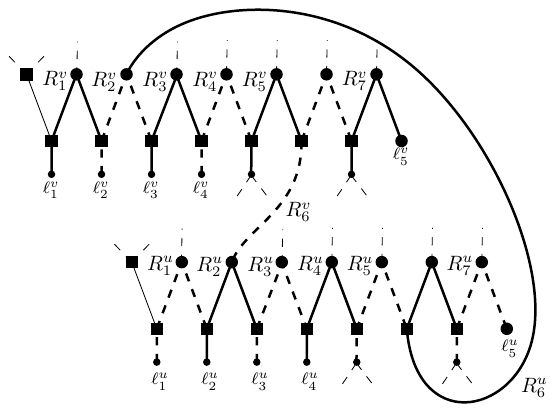} \label{subfig:gadgets_cherry_cover}}
        \subfloat[]{\includegraphics[height=4cm]{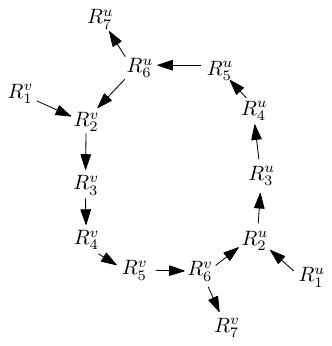}\label{subfig:gadgets_cherry_cover_graph}}
        \\
    \subfloat[]{\includegraphics[height=4cm]{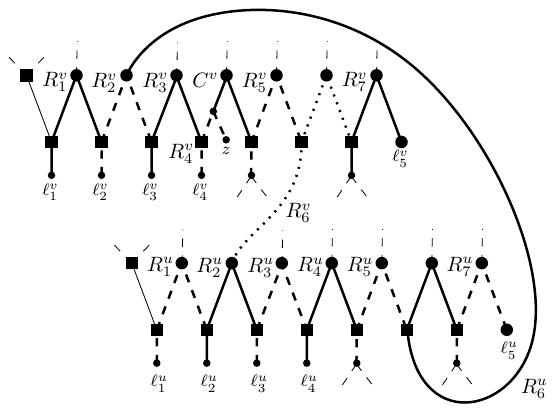}\label{subfig:gadgets_cherry_cover_leaf}}
        \subfloat[]{\includegraphics[height=4cm]{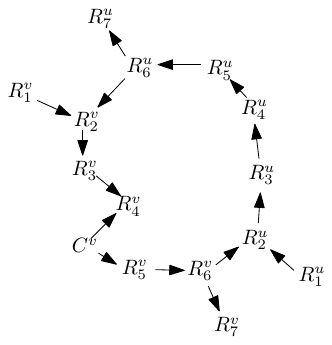}\label{subfig:gadgets_cherry_cover_leaf_graph}}
    \caption{Cherry cover of~$\Gad(v)$ and~$\Gad(u)$. In (a), the unique cherry cover of the principal part of~$\Gad(v)$ and~$\Gad(u)$ is displayed, in (b), the cherry cover auxiliary graph of (a) is given. In (c), the leaf~$z \notin X$ is added to the principal part of~$\Gad(v)$, and one possible cherry cover of the same part of the network is given. And in (d), the cherry cover auxiliary graph of (c) is given.}
    \label{fig:reduction_cherry_cover}
\end{figure}

\begin{lemma}
\label{lem:n_fence_cycle}
Let $N$ be a tree-based network and suppose that for two N-fences 
$\hat{N_u} := (a^u_1,a^u_2,\ldots, a^u_{k_u})$ and 
$\hat{N_v} := (a^v_1,a^v_2,\ldots, a^v_{k_v})$ of length at least $3$, there exist directed paths in $N$ from $\head(a^u_h)$ to $\tail(a^v_i)$ and from $\head(a^v_j)$ to $\tail(a^u_k)$, for even $h,i,j,k$ with $k < h$ and $i < j$. 
Then every cherry cover auxiliary graph of~$N$ contains a cycle.
\end{lemma}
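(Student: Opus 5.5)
By \Cref{lem:n_fence_unique}, every cherry cover $P$ of $N$ contains the same reticulated cherry shapes covering the arcs $a^u_2,\ldots,a^u_{k_u}$ and $a^v_2,\ldots,a^v_{k_v}$. Write $R^u_{t}$ for the shape containing $a^u_{2t}$ and $a^u_{2t+1}$, and similarly $R^v_t$. The plan is to exhibit, in the auxiliary graph of an arbitrary cover $P$, a directed walk that leaves $\hat{N_u}$, enters $\hat{N_v}$, moves along $\hat{N_v}$, returns to $\hat{N_u}$, and then moves along $\hat{N_u}$ back to its start.

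\textbf{Step 1: moving along an N-fence.} Consider consecutive shapes $R^u_t$ and $R^u_{t+1}$. The shape $R^u_t$ has endpoints $\head(a^u_{2t+1})$ and the child of $\head(a^u_{2t-1})$. The shape $R^u_{t+1}$ has internal vertices $\tail(a^u_{2t+2})=\tail(a^u_{2t+3})$ and $\head(a^u_{2t+1})$. Hence $\head(a^u_{2t+1})$ is an endpoint of $R^u_t$ and an internal vertex of $R^u_{t+1}$, giving an arc $v_{R^u_t}v_{R^u_{t+1}}$. The fence's orientation fixes which direction this arc points. I will check carefully which shape is "directly above" the other using the convention $\tail(a_1)$ reticulation. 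The intended outcome is a directed path along each fence, from the shape containing $a^u_k$ towards the shape containing $a^u_h$, or the reverse. The hypotheses $k<h$ and $i<j$ are exactly what make the chain close up with consistent orientation.

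\textbf{Step 2: following a connecting path between fences.} Take a directed path $Q$ from $\head(a^u_h)$ to $\tail(a^v_i)$. Here $\head(a^u_h)$ is the reticulation-side endpoint of the fence shape containing $a^u_h$, and $\tail(a^v_i)$ is an internal vertex of the shape containing $a^v_i$. Every non-root arc of $Q$ is covered by a unique shape of $P$. I will argue inductively along $Q$: for consecutive arcs $e,e'$ of $Q$ meeting at a vertex $x$, either they lie in the same shape, or $x$ is an endpoint of the shape covering $e$ and an internal vertex of the shape covering $e'$. The latter holds because $x$ is the head of $e$ and the tail of $e'$. Heads of arcs in a shape are endpoints or internal vertices, and tails are internal. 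The case where $x$ is internal in both shapes (for instance, the middle arc and the endpoint arc of one reticulated shape) is the "same shape" case. This yields a directed walk in the auxiliary graph from the shape covering the first arc of $Q$ to the shape containing $a^v_i$.

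It remains to connect the fence shape containing $a^u_h$ to the shape covering the first arc of $Q$. This uses the same endpoint/internal argument at $\head(a^u_h)$, which is an endpoint of its fence shape. The same construction gives a walk from the shape containing $a^v_j$ to the shape containing $a^u_k$.

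\textbf{Step 3: closing the cycle.} Concatenate the pieces. Start at the shape containing $a^u_h$, follow the walk to $a^v_i$, move along $\hat{N_v}$ from $i$ to $j$ by Step 1, follow the walk to $a^u_k$, and move along $\hat{N_u}$ from $k$ to $h$ by Step 1. This gives a closed directed walk, so the auxiliary graph contains a cycle.

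The main obstacle is the orientation bookkeeping in Step 1, together with the junction analysis at the ends of $Q$. In particular, at the junction the endpoint/internal roles must go the right way, and "directly above" requires a vertex to be an endpoint of one shape and internal to the other, not merely shared by them. I will first verify Step 1 on the gadget example of \Cref{subfig:gadgets_cherry_cover_graph} to fix the direction.
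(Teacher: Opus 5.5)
Your proposal is essentially the paper's proof. You fix the shapes $R^u_t, R^v_t$ via \Cref{lem:n_fence_unique}, chain consecutive fence shapes, follow the two connecting paths arc by arc using the same-shape versus directly-above dichotomy, and concatenate into a closed walk of positive length. Your Step 1 computation already settles the orientation, so you need not hedge there: the arc goes from $R^u_t$ to $R^u_{t+1}$, hence $k<h$ and $i<j$ give walks $R^u_{k/2}\leadsto R^u_{h/2}$ and $R^v_{i/2}\leadsto R^v_{j/2}$, exactly as used in Step 3.

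One claim in Step 2 is wrong, although the conclusion you need still holds. For even $h$, the vertex $\head(a^u_h)=\head(a^u_{h-1})$ is not an endpoint of $R^u_{h/2}$. It is the internal reticulation $p_x$ of that shape, the head of its middle arc $a^u_h$. Your own Step 1 shows this same fact for $\head(a^u_{2t+1})$ relative to $R^u_{t+1}$. So there is no ``directly above'' arc at this junction. Instead, since $\head(a^u_h)$ is a reticulation, the first arc of $Q$ is its unique outgoing arc $(\head(a^u_{h-1}),c^u_{h-1})$. That arc already belongs to $R^u_{h/2}$, so the walk simply starts at $R^u_{h/2}$. The same holds at $\head(a^v_j)$ for the second path. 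Equivalently, prepend $a^u_h$ to $Q$, and your general consecutive-arc argument covers the junction via the same-shape case.

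The other junction is as you describe. The tree vertex $\tail(a^v_i)$ must be an endpoint of the shape covering the last arc of $Q$, since a tree vertex cannot be the $p_x$ of a reticulated cherry shape, and it is internal to $R^v_{i/2}$. That gives the required arc into $R^v_{i/2}$, and likewise into $R^u_{k/2}$.
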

\begin{proof}
    Let us again denote the reticulated cherry shape that contains~$a^u_h$ and~$a^u_{h+1}$ by~$R^u_{h/2}$, and similarly for~$R^v_{i/2}$,~$R^v_{j/2}$,and~$R^u_{k/2}$.
    By \Cref{lem:n_fence_unique}, all of ~$R^u_{h/2}$,~$R^v_{i/2}$,~$R^v_{j/2}$,~$R^u_{k/2}$ appear in the cherry cover auxiliary graph. Moreover~$R^u_{k/2}$ is above~$R^u_{h/2}$, and~$R^v_{i/2}$ is above~$R^v_{j/2}$.
    Now observe that for any consecutive arcs on the path from $\head(a^u_h)$ to $\tail(a^v_i)$, either they are part of the same reticulated cherry shape in the cherry cover, or they are part of different cherry shapes with one cherry shape directly above the other. This implies that there is a path from~$R^u_{h/2}$ to~$R^v_{i/2}$ in the cherry cover auxiliary graph. A similar argument shows that there is a path from~$R^v_{j/2}$ to~$R^u_{k/2}$. 
    But then we have that~$R^u_{h/2}$ is above~$R^v_{i/2}$, which is above~$R^v_{j/2}$, which is above~$R^u_{k/2}$, which is above~$R^u_{h/2}$ and we have a cycle.
\end{proof}

\ym{Orchard networks admit acyclic cherry covers.}
\markj{In order to remove all possible cycles from a possible cherry cover, it is therefore necessary to disrupt the principal part of either $\Gad(u)$ or $\Gad(v)$, for any arc~$uv$ in $G$.}

\begin{lemma}\label{lem:LORDimpliesVC}
    Let~$G$ be a 3-regular graph and let~$N_G$ be the network obtained by the reduction above. Suppose that~$A$ is a set of arcs of~$N_G$, for which adding leaves to every arc in~$A$ results in an orchard network.  
    For every arc~$uv\in E(G)$, there exists an arc~$a\in A$ that is an arc of the principal part of~$\Gad(u)$ or~$\Gad(v)$.
\end{lemma}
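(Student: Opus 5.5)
We argue by contradiction, using \Cref{lem:n_fence_cycle}. Suppose that for some edge $uv\in E(G)$ no arc of $A$ lies in the principal part of $\Gad(u)$ or of $\Gad(v)$. Let $N'$ be the network obtained from $N_G$ by adding a leaf to every arc of $A$. By assumption $N'$ is orchard, so by \Cref{thm:AcyclicCherryCover} it has an acyclic cherry cover. Hence $N'$ is tree-based (\Cref{lem:NetworkClassContainment}). To reach a contradiction, we will show that $N'$ satisfies the hypotheses of \Cref{lem:n_fence_cycle}, so that every cherry cover auxiliary graph of $N'$ contains a cycle.

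The first step is to check that the principal parts of $\Gad(u)$ and $\Gad(v)$ are still N-fences of length $15$ in $N'$. No arc of either principal part is subdivided, so these $15$ arcs remain a zig-zag trail. Its first tail $r_0^u$ is still a reticulation and its last arc $w_7^u\ell_5^u$ still ends at a tree vertex. The trail is also still maximal. Leaf additions elsewhere change only the arcs incident to the vertices of the trail that lie outside it, namely the incoming arcs of the $w_i$, the outgoing arcs of the $r_i$, and the incoming arcs of $r_0$. None of these can extend the trail: extending it would require a further shared tail or shared head with the end arcs. The first arc has tail $r_0^u$, a reticulation, whose only outgoing arc is $a_1$. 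The last arc has head the leaf $\ell_5^u$. Subdividing the arcs around the trail does not create such a shared endpoint. The zig-zag decomposition is unique (\Cref{thm:UniqueMaxZigZag}), so this trail is an N-fence of $N'$. The same argument applies to $\Gad(v)$.

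The second step is to exhibit the directed paths required by \Cref{lem:n_fence_cycle}. In $N_G$ there is an arc from $r^u_{\tau_u(v)}$ to $w^v_{\pi_v(u)}$. In $N'$ it may be subdivided by leaf additions, but it is still a directed path. Indexing the arcs of the principal part as in the text, $r_i^u$ is the head of the arc $a^u_{2i-2}=w_{i-1}^ur_i^u$ for $i\ge 2$, and $w_i^v$ is the tail of $a^v_{2i}$. So we have a directed path from $\head(a^u_h)$ to $\tail(a^v_i)$ with $h=2\tau_u(v)-2\in\{8,10,12\}$ and $i=2\pi_v(u)\in\{2,4,6\}$, both even. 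Symmetrically, there is a path from $\head(a^v_j)$ to $\tail(a^u_k)$ with $j\in\{8,10,12\}$ and $k\in\{2,4,6\}$. These indices satisfy $k<h$ and $i<j$.

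With the hypotheses verified, \Cref{lem:n_fence_cycle} says that every cherry cover auxiliary graph of $N'$ has a cycle. This contradicts the existence of an acyclic cherry cover, so some arc of $A$ lies in one of the two principal parts. The main obstacle is the first step: justifying rigorously that the principal parts remain maximal N-fences in $N'$, and that the index bookkeeping matches the reticulated cherry shapes $R^v_{i/2}$ fixed by \Cref{lem:n_fence_unique}. The point to argue carefully is that subdividing neighbouring arcs (those into $w_1,w_2,w_3$ or out of $r_1,\dots,r_7$) cannot merge them into the trail.
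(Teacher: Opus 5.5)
Your overall strategy is the paper's. You assume an edge $uv$ whose two principal parts receive no leaf, note that both principal parts survive as N-fences of length $15$, exhibit the two cross paths, and invoke \Cref{lem:n_fence_cycle} to contradict \Cref{thm:AcyclicCherryCover}. Your justification of maximality is sound and more detailed than the paper's bare assertion: $r_0^u$ is a reticulation with a single out-arc, $\ell_5^u$ is a leaf, and neither fact is affected by subdividing other arcs. Arguing by contradiction also supplies the tree-basedness that \Cref{lem:n_fence_cycle} requires.

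There is, however, an off-by-two error in the second step. In the principal part one has $a_{2i}=w_ir_i$ and $a_{2i+1}=w_ir_{i+1}$. So $w_{i-1}r_i$ is $a_{2i-1}$, not $a_{2i-2}$, and $\head(a_{2i-2})=r_{i-1}$. Your choice $h=2\tau_u(v)-2\in\{8,10,12\}$ therefore names the wrong reticulation. For example, if $\tau_u(v)=5$ then $\head(a^u_8)=r^u_4$, whose only child is the leaf $\ell^u_4$, so the claimed path to $w^v_{\pi_v(u)}$ does not exist. In the other cases you start from $r^u_{\tau_u(v)-1}$, whose out-arc goes to a different neighbour's gadget. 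Consequently the shape $R^u_{h/2}$ you would use is not the one covering the first arc of the connecting path.

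The fix is to take $h=2\tau_u(v)\in\{10,12,14\}$, which is the even index with $\head(a^u_h)=r^u_{\tau_u(v)}$, and likewise $j=2\tau_v(u)\in\{10,12,14\}$. Your values $i=2\pi_v(u)$ and $k=2\pi_u(v)$ in $\{2,4,6\}$ are correct. With this correction, $k<h$ and $i<j$ still hold, since $h,j\ge 10>6\ge i,k$; this matches the paper's $h\ge 10$, $i\le 6$, and the argument goes through.
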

\begin{proof}
    We prove this lemma by contraposition. Let us assume that there is an arc~$uv\in E(G)$, such that no arcs of the principal part of~$\Gad(u)$ or~$\Gad(v)$ are in~$A$.
    We shall show that the network obtained by adding leaves to all~$a \in A$ in $N_G$ -- which we denote~$N_G+A$ -- is not orchard.
  
    From \Cref{thm:AcyclicCherryCover} we know that $N_G$ is orchard if and only if~$N_G$ has an acyclic cherry cover.
    We show here that~$N_G+A$ will not have an acyclic cherry cover, thereby showing that~$N_G+A$ is not orchard. 

    As no arcs were added to the principal part of~$\Gad(u)$ or~$\Gad(v)$, these principal parts remain N-fences in $N_G+A$. Furthermore by construction, $N$ has an arc from some~$\head(a_h^v)$ to~$\tail(a_i^u)$ for even $h \geq 10$ and even $i \leq 6$, and so~$N_G+A$ has a path from~$\head(a_h^v)$ to~$\tail(a_i^u)$. Similarly ~$N_G+A$ has a path from~$\head(a_j^u)$ to~$\tail(a_k^v)$ for some even $j \geq 10$ and $k\leq 6$. Then \Cref{lem:n_fence_cycle} implies that the auxiliary graph of any  cherry cover of  $N_G+A$ contains a cycle. By \Cref{thm:AcyclicCherryCover}, we have that~$N_G+A$ is not orchard.  
\end{proof}

\markj{To complete the proof of the validity of the reduction, we show that in order to make $N_G$ orchard by leaf additions, it is sufficient (and necessary) to add a leaf~$z^v$ to an appropriate arc of~$\Gad(v)$ for every $v$ in a vertex cover~$V_{sol}$ of $G$ (see \Cref{subfig:gadgets_cherry_cover_leaf}). 
    The key idea is that this splits the principal part of $\Gad(v)$ from an N-fence into an N-fence and an M-fence, and this allows us to avoid the cycle in the cherry cover auxiliary graph (see \Cref{subfig:gadgets_cherry_cover_leaf_graph}).}

\begin{lemma}\label{lem:VCiffLORD}
    Let~$G$ be a 3-regular graph and let~$N_G$ be the network obtained by the reduction described above. Then $G$ has a minimum vertex cover of size at most~$k$ if and only if~$L_{\cOr}(N_G) \le k$.
\end{lemma}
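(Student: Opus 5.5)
The plan is to prove the two directions separately. The direction ``$L_\cOr(N_G)\le k$ implies a vertex cover of size at most $k$'' follows almost directly from \Cref{lem:LORDimpliesVC}. The converse needs an explicit construction followed by a check that the result is orchard.

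\emph{($\Leftarrow$)} Suppose $L_\cOr(N_G)\le k$, and let $A$ be a set of at most $k$ arcs such that $N_G+A$ is orchard. Define $V_{sol}$ to be the set of vertices $v\in V(G)$ for which $A$ contains an arc of the principal part of $\Gad(v)$. Principal parts of distinct gadgets are arc-disjoint, so $|V_{sol}|\le |A|\le k$. By \Cref{lem:LORDimpliesVC}, every edge $uv\in E(G)$ has $u\in V_{sol}$ or $v\in V_{sol}$. Hence $V_{sol}$ is a vertex cover of size at most $k$.

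\emph{($\Rightarrow$)} Let $V_{sol}$ be a vertex cover with $|V_{sol}|\le k$. For each $v\in V_{sol}$, add a leaf $z^v$ to a fixed arc in the middle of the principal part of $\Gad(v)$, as in \Cref{subfig:gadgets_cherry_cover_leaf}. A natural choice is $w_4^v r_5^v$, which lies between the ``incoming'' attachment reticulations $r_1^v,\dots,r_4^v$ (reached from $w_1^v,w_2^v,w_3^v$) and the ``outgoing'' ones $r_5^v,r_6^v,r_7^v$. Call the result $N'$. It remains to show that $N'$ is orchard, either by exhibiting a cherry-picking sequence or an acyclic cherry cover (\Cref{thm:AcyclicCherryCover}).

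I would build the sequence in three phases.
\begin{enumerate}
\item For each $v\in V_{sol}$: the leaf $z^v$ splits the N-fence. Its lower part, the arcs of $w_4^v,\dots,w_7^v$ together with $z^v$ and $\ell_5^v$, becomes reducible. Using $z^v$ as a sibling leaf, reduce the reticulated cherries at $r_5^v$, $r_6^v$, $r_7^v$ and their outgoing connection arcs. Each out-arc from $r_\tau^v$ into some $\Gad(u)$ runs to a $w_\pi^u$. Reducing the reticulated cherry at $r_\tau^v$ deletes that linking arc, which disconnects $\Gad(v)$'s influence from $\Gad(u)$'s top.
\item For $u\notin V_{sol}$: all three neighbours of $u$ lie in $V_{sol}$. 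Once their arcs are removed, the vertices $w_1^u,w_2^u,w_3^u$ become suppressed or degenerate. Then the chain $r_1^u,\dots,r_4^u$ with leaf children $\ell_1^u,\dots,\ell_4^u$ can be reduced reticulated cherry by reticulated cherry, working upward: $(\ell_4^u,\cdot)$, then $(\ell_3^u,\cdot)$, and so on. After that the $m_i^u$ part collapses, $r_0^u$ becomes reducible, and the whole gadget shrinks to a few leaves.
\item Finish the upper parts of the $V_{sol}$ gadgets and the spine $\rho,s_1,\dots$, which is a tree apart from what remains.
\end{enumerate}

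The main obstacle is making Phase 1 rigorous: that the lower part of a split gadget really is reducible before its upper part, and that the order between gadgets is consistent. The cleanest way is to exhibit an acyclic cherry cover. Take the cover of \Cref{subfig:gadgets_cherry_cover_leaf}, where the shapes below $z^v$ are re-paired so that no shape of the lower half lies above a shape of the upper half. Then check acyclicity of the auxiliary graph via a ranking: lower halves of $V_{sol}$ gadgets, then whole non-cover gadgets, then upper halves and the $m$-vertices, then the spine. Every auxiliary arc must go from higher rank to lower rank. The only inter-gadget auxiliary arcs run from the $r_{5,6,7}$ part of one gadget into the $w_{1,2,3}$ part of another. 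Since every edge has an endpoint in $V_{sol}$, each such arc either starts in a lower half of a split gadget or ends at a gadget whose upper part is ranked accordingly. I expect this to be checkable, but it requires care with the local arcs through $m_1^v,\dots,m_4^v$ and $r_0^v$.
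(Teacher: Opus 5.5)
\textbf{Verdict: the $(\Leftarrow)$ direction is correct, but the $(\Rightarrow)$ construction fails: with the leaf on $w_4^vr_5^v$ the network is never orchard, and the reduction order in your sketch is reversed.}

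Your $(\Leftarrow)$ direction is the paper's own argument via \Cref{lem:LORDimpliesVC}.

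\textbf{Why the leaf on $w_4^vr_5^v$ fails.} Let $q^v$ be the parent of $z^v$.
\begin{enumerate}
\item The principal part splits into the N-fence $(r_0^vr_1^v,w_1^vr_1^v,\dots,w_3^vr_4^v,w_4^vr_4^v,w_4^vq^v)$ and the M-fence $(q^vz^v,q^vr_5^v,w_5^vr_5^v,\dots,w_7^v\ell_5^v)$.
\item By \Cref{lem:n_fence_unique}, every cherry cover contains $\{r_4^v\ell_4^v,w_4^vr_4^v,w_4^vq^v\}$. Its endpoint $q^v$ is internal to the shape covering $q^vz^v$ and $q^vr_5^v$.
\item That shape is either $\{r_5^vc_5^v,q^vr_5^v,q^vz^v\}$ (with $c_5^v$ the child of $r_5^v$), or the cherry $\{q^vz^v,q^vr_5^v\}$. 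The cherry lies directly above the reticulated cherry shape containing $r_5^vc_5^v$.
\item So in every cover, the forced chain entering $\Gad(v)$ at $w_1^v,w_2^v,w_3^v$ still reaches $r_5^v$. From there it reaches the forced shape at $w^u_{\pi_u(v)}$, where $u$ is the neighbour with $\tau_v(u)=5$.
\item Iterate this map $v\mapsto u$ from any vertex of $V_{sol}$. Either you reach some $u\notin V_{sol}$, whose intact N-fence closes a cycle by the argument of \Cref{lem:n_fence_cycle}. Or you stay inside $V_{sol}$ and revisit a vertex, which also closes a cycle.
\end{enumerate}
Hence every cherry cover is cyclic, and no re-pairing, ranking or cherry-picking sequence can succeed.

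The paper instead puts $z^v$ on $w_4^vr_4^v$. Then the forced N-fence ends with $q^vr_4^v,q^vz^v$, and its last shape $\{r_4^v\ell_4^v,q^vr_4^v,q^vz^v\}$ has only leaves as endpoints. Meanwhile $w_4^v$ joins the flexible M-fence with $r_5^v,r_6^v,r_7^v$. So nothing entering at $w_1^v,w_2^v,w_3^v$ can reach the outgoing reticulations.

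\textbf{Your phases and ranking run in the wrong direction.} Linking arcs go from $r^u_\tau$ to $w^v_\pi$. So $w^v_1,w^v_2,w^v_3,r^v_1,\dots,r^v_4$ form the lowest part of $N_G$ (the sets $V_2(\Gad(v))$ in \Cref{lem:N_Gisnetwork}).
\begin{itemize}
\item A reticulated cherry at $r_5^v$ can only be picked once its child $w^u_\pi$ has collapsed to a leaf.
\item Picking it deletes an incoming arc of $r_5^v$, not the tree arc $r_5^vw^u_\pi$.
\item Consequently your Phase 1 presupposes Phase 2, and Phase 2 presupposes Phase 1.
\item For the same reason, the shapes at $r_5^v,r_6^v,r_7^v$ cannot have the lowest rank: they have auxiliary arcs into neighbouring gadgets.
\end{itemize}

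The correct order is:
\begin{enumerate}
\item incoming parts of cover gadgets;
\item non-cover gadgets;
\item outgoing parts of cover gadgets;
\item the $m$-vertices and the spine.
\end{enumerate}
This is what the paper's HGT-consistent labelling (\Cref{thm:OrchIFFHori}) encodes:
\begin{itemize}
\item labels $12$--$15$ on $w^v_1,\dots,w^v_3,r^v_1,\dots,r^v_4,q^v$ for $v\in V_{sol}$;
\item labels $1$--$4$ on $w^v_4,\dots,w^v_7,r^v_5,\dots,r^v_7$ for $v\in V_{sol}$;
\item labels $5$--$11$ on the principal parts of the other gadgets;
\item labels $\le 0$ on everything above.
\end{itemize}
The vertex-cover property is used exactly to guarantee that every linking arc leaving a non-cover gadget (labels $9$--$11$) enters a cover gadget (labels $12$--$14$). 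That check is the verification your sketch leaves open.
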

\begin{proof}
    Suppose first that~$V_{sol}$ is a vertex cover of~$G$ with at most~$k$ vertices. We shall show that adding a leaf to an arc of the principal part of each~$\Gad(v)$ for~$v \in V_{sol}$ makes $N_G$ orchard. 
    This will show that the minimum vertex cover of~$G$ is at least $L_{\cOr}(N_G)$. In the remainder of this proof, we will refer to vertices and arcs of~$N_G$ as introduced above in the reduction.
    
    For every~$v \in V_{sol}$, we add a leaf~$z^v$ to the arc~$w_4^v r_4^v$ of~$\Gad(v)$ (see \Cref{subfig:gadgets_cherry_cover_leaf}). Let~$q^v$ be the parent of~$z^v$.
    This splits the principal part of $\Gad(v)$ from an N-fence into an N-fence and an M-fence, and this allows us to avoid the cycle in the cherry cover auxiliary graph (see \Cref{subfig:gadgets_cherry_cover_leaf_graph}).
    
    Let us call the new network~$M$.
    To formally show that $M$ is orchard, we give an HGT-consistent labelling~$t:V(M)\rightarrow\R$.
    
    Begin by setting $t(\rho) = 0$, and for any vertex in $s_1,\dots, s_{g-1}$ or $\rho^v, m_4^v,\dots, m_2^v$ for any $v$ in $V(G)$, let this vertex have label equal to the label of its parent plus~$1$. 
    Let $h$ be the maximum value assigned to a vertex so far, and now adjust $t$ by subtracting $(h+1)$ from each label. Thus, we may now assume that all vertices in $\rho, s_1,\dots, s_{g-1}$ or $\rho^v, m_4^v,\dots, m_2^v$ for any $v$ in $V(G)$ have label $\leq -1$. Now set  $t(m_1^v) = 0$ and $t(r_0^v) = 0$, for each $v$ in $V(G)$.
    
    It is easy to see that so far $t$ satisfies the properties of an HGT-consistent labelling. It remains to label the vertices in the principal part of each gadget~$\Gad(v)$, and the leaves of each gadget, and the new vertices $q^v$ and $z^v$ for $v \in~V_{sol}$. We do this as follows.
    
    For $v \in~V_{sol}$, set $t(r_1^v)=t(w_1^v) = 12, t(r_2^v)=t(w_2^v) = 13, t(r_3^v)=t(w_3^v) = 14$, and $t(r_4^v) = t(q^v) = 15$. 
    Set $t(w_4^v) = 1, t(r_5^v) = t(w_5^v)=2, t(r_6^v) = t(w_6^v) = 3$, and $t(r_7^v) = t(w_7^v) = 4$.
    
    For $v \notin~V_{sol}$, set $t(r_1^v) = t(w_1^v) = 5$, and $t(r_i^v)=t(w_i^v) = i+4$ for every $i$ up to $t(r_7^v) = t(w_7^v) = 11$.
    
    Finally, for each leaf $\ell$ with parent $p$ set $t(\ell) = t(p) + 1$.
    
    It remains to observe that $t$ is a non-temporal labelling of $M$ and for every reticulation $r$ in $M$, $r$ has exactly one parent $p$ with $t(p)=t(r)$. Thus $t$ is an HGT-consistent labelling of $M$, and it follows from~\Cref{thm:OrchIFFHori} that $M$ is orchard.  
    
    
    Suppose now that we have a set of arcs~$A_{sol}$ of~$N_G$ of size at most~$k$, such that adding leaves to the arcs in~$A_{sol}$ makes~$N_G$ orchard. By \Cref{lem:LORDimpliesVC}, for every arc~$uv\in E(G)$, there exists an arc~$a\in A_{sol}$ that is an arc of the principal part of~$\Gad(u)$ or~$\Gad(v)$. 
    It follows immediately that the set~$\{v\in V(G): \text{$A_{sol}$ contains an arc of the principal part of~$\Gad(v)$}\}$ is a vertex cover of~$G$. 
    Since this is true for any such set of arcs~$A_{sol}$, it follows that if there is such an~$A_{sol}$ of size at most~$k$, then there must exist a vertex cover of~$G$ of size at most~$k$. 
\end{proof}

\begin{theorem}\label{thm:L_Or=Hard}
    Let~$N$ be a network. The decision problem \dpLD{} is NP-complete. Computing $L_\cOr(N)$ is NP-hard. 
\end{theorem}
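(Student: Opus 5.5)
To prove \Cref{thm:L_Or=Hard}, I would establish NP-completeness of \dpLD{} in two parts, membership in NP and NP-hardness. NP-hardness of computing $L_\cOr(N)$ then follows at once, since any algorithm computing $L_\cOr(N)$ decides whether $L_\cOr(N)\le k$.

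\emph{Membership in NP.} The certificate is a set of at most $k$ arcs of $N$ to which leaves are added. As noted after \Cref{def:leaf_add}, it suffices to add at most one leaf to each arc of the original network. Given the certificate, we build $N+A$ in polynomial time. We then check orchardness greedily: repeatedly reduce any cherry or reticulated cherry. Since the order of reductions does not matter \cite{erdHos2019class,janssen2021cherry}, the network is orchard if and only if this greedy process ends at a single leaf. Each reduction deletes a leaf or a reticulation arc, so there are polynomially many steps, each taking polynomial time. One detail needs care: two leaves could be added to the same arc, with the second attached to a subdivided arc. I would handle this either by allowing a multiset of arcs or by noting that any added leaf ends up on some arc of the current network, which can be encoded as a sequence. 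Either way the certificate stays polynomial in size. We may also assume $k\le |A(N)|$, because otherwise the answer is trivially given by the linear bound of \Cref{thm:LORUpperBound}.

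\emph{NP-hardness.} I would reduce from \dpVC{}, which is NP-complete, using the construction $(G,k)\mapsto(N_G,k)$ described above. Three facts are needed.
\begin{itemize}
\item $N_G$ is a network (\Cref{lem:N_Gisnetwork}).
\item $N_G$ has size polynomial in $|V(G)|$: each gadget has a constant number of vertices, plus $g-1$ spine vertices and $3g$ connecting arcs. So the construction runs in polynomial time.
\item Correctness, $G$ has a vertex cover of size at most $k$ if and only if $L_\cOr(N_G)\le k$, is exactly \Cref{lem:VCiffLORD}.
\end{itemize}
Combining these gives a polynomial-time many-one reduction.

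The main obstacle is the subtle point in the NP-membership argument: making sure the certificate and the verification genuinely capture arbitrary leaf-addition sequences, including repeated additions near one another. Everything else is assembly of the lemmas already proven.
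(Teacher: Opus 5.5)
Your proposal is correct and follows the paper's proof essentially step for step. For NP membership, you add the certified leaves and test orchardness in polynomial time; the paper cites Section~6 of \cite{janssen2021cherry} for this test, whereas you argue it directly via order-independent greedy cherry picking. For NP-hardness, you use the same reduction $(G,k)\mapsto(N_G,k)$ together with \Cref{lem:N_Gisnetwork} and \Cref{lem:VCiffLORD}. Your extra care in encoding the certificate as a sequence of additions and bounding its length via \Cref{thm:LORUpperBound} is a sound refinement that the paper's one-line membership argument glosses over.
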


\begin{proof}
    Suppose we are given a set of arcs~$A_{sol}$ of~$N_G$ of size at most~$k$.
    Upon adding leaves to every arc in~$A_{sol}$, we may check that the resulting network is orchard in polynomial time (see Section 6 of \cite{janssen2021cherry}).
    This implies that~\dpLD{} is in NP. The reduction from \dpVC{} to \dpLD{} outlined at the start of the section takes polynomial time, since we add a constant number of vertices and arcs for every vertex in the \dpVC{} instance. The NP-completeness of \dpLD{} follows from \Cref{lem:VCiffLORD}. The optimization problem of \dpLD{}, i.e., the one of computing~$L_\cOr(N)$ is therefore NP-hard.
\end{proof}

\subsection{Valid Arc Deletion Measure}\label{subsec:ArcDelHard}
\ym{In this subsection, we show that the decision problem variant of computing $A^*_{\cOr}$ is also NP-complete, using the same reduction as outlined in \Cref{subsec:LeafAdditionHard}.}
\ej{Recall that the order in which valid arcs are deleted matters (see \Cref{sec:A*Deletion}). 
Therefore, we consider a sequence~$A = (a_1,\ldots, a_k)$ of valid arcs, such that $a_i$ is a valid arc in the network obtained by removing $(a_1,\ldots, a_{i-1})$ sequentially from the original network.}

\medskip
\begin{center}
\noindent\fbox{\parbox{0.9\linewidth}{
{\sc \dpAsD~(Decision)}\\
{\bf Input:} A network $N$ on $X$ and a natural number $k$.\\
{\bf Decide:} Is $N$ orchard after at most $k$ valid arc deletions?}}
\end{center}
\medskip



\begin{lemma}\label{lem:AORDimpliesVC}
    Let~$G$ be a 3-regular graph and let~$N_G$ be the network obtained by the reduction in \Cref{subsec:LeafAdditionHard}. Suppose that~$A$ is a sequence of valid arcs, such that deleting them in order results in an orchard network.  
    For every arc~$uv\in E(G)$, there exists an arc~$a\in A$ that is an arc of~$\Gad(u)$ or~$\Gad(v)$.
\end{lemma}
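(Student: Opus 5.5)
The plan is to argue by contraposition, in the same way as \Cref{lem:LORDimpliesVC}. Fix an arc $uv\in E(G)$ and suppose that no arc deleted in the sequence $A=(a_1,\ldots,a_m)$ is an arc of $\Gad(u)$ or $\Gad(v)$. I will show that the final network still has a cherry-cover cycle, which contradicts \Cref{thm:AcyclicCherryCover}.

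A valid deletion can merge arcs, because suppressing a vertex turns two arcs into one. To handle this, I track every arc of an intermediate network as the directed path of original arcs of $N_G$ that it replaces. I read ``$a_i$ is an arc of $\Gad(u)$'' as ``the path of $a_i$ contains an arc constructed inside $\Gad(u)$''; for a single original arc this is exactly the statement. The key fact is that a valid deletion of an arc $xy$ removes only $xy$ and suppresses only $x$ and $y$. This holds because the network loses exactly three arcs, so no vertex is deleted and no parallel arc is removed. Hence a vertex of $N_G$ can disappear only by being an endpoint of some deleted arc.

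Step 1 shows that no vertex of the principal part of $\Gad(u)$ or $\Gad(v)$ is ever an endpoint of a deleted arc, and neither are the vertices $m_1,m_4$ that sit directly above $r_0$. I check this locally from the construction. The head of a deleted arc is a reticulation. If it is some $r_i^u$, then the last original arc on its path enters $r_i^u$. Every incoming arc of $r_0^u,\ldots,r_7^u$ was built inside $\Gad(u)$: the parents of $r_0^u$ are $m_1^u,m_4^u$, the parents of $r_1^u$ are $r_0^u,w_1^u$, and the parents of $r_i^u$ are $w_{i-1}^u,w_i^u$ for $i\ge 2$. Similarly, if the tail is some $w_i^u$, $r_0^u$ or $m_j^u$, then the first original arc on its path leaves that vertex, and every outgoing arc of these vertices is a gadget arc. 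The only exception is the tail $r_5^u,r_6^u,r_7^u$, but a reticulation tail makes the arc invalid by \Cref{lem:ValidEdgeCharacterization}. Thus any such deletion would use an arc of $\Gad(u)$, which is excluded, and the same holds for $\Gad(v)$.

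Step 2 concludes the argument. By Step 1, every arc of the principal parts of $\Gad(u)$ and $\Gad(v)$ survives unchanged. The arcs from $r_1,\ldots,r_4$ towards their leaves survive as arcs to leaves or to tree vertices, and $r_0$ keeps two parents. Every vertex keeps its indegree and outdegree, and no arc is glued onto the fence, so both principal parts are still N-fences of length~15 in the final network $N'$. The cross arc from $r^v_{\tau_v(u)}$ to $w^u_{\pi_u(v)}$ also survives as a single original arc, since neither endpoint is ever suppressed; the same holds for the cross arc from $\Gad(u)$ to $\Gad(v)$. This gives paths from $\head(a^v_h)$ to $\tail(a^u_i)$ and from $\head(a^u_j)$ to $\tail(a^v_k)$ with $h,j\ge 10$ even and $i,k\le 6$ even.

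I then apply \Cref{lem:n_fence_cycle}. It needs $N'$ to be tree-based, but this is harmless: if $N'$ has no cherry cover it is not orchard by \Cref{thm:TB=ChCover} and \Cref{lem:NetworkClassContainment}. Otherwise every cherry cover of $N'$ is cyclic, so $N'$ is not orchard by \Cref{thm:AcyclicCherryCover}. This contradicts the hypothesis on $A$.

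The step I expect to be most delicate is the bookkeeping in Step 1. Merged arcs created by earlier suppressions must be shown to never sneak a deletion into the fences without passing through a gadget arc, and Step 2 needs the N-fence status, as a maximal zig-zag trail, to be preserved exactly. Both reduce to the local check that every arc incident to a principal-part vertex, other than the cross arcs leaving $r_5,r_6,r_7$, is a gadget arc.
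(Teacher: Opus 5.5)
Your proof follows the paper's route. You argue by contraposition and show that no valid deletion can have an endpoint in the principal parts of $\Gad(u)$ and $\Gad(v)$: their reticulations are entered only by gadget arcs, and arcs leaving $r_5,r_6,r_7$ have reticulation tails and so are invalid. You then conclude that both principal parts remain N-fences and the two cross arcs between them survive, and finish with \Cref{lem:n_fence_cycle} and \Cref{thm:AcyclicCherryCover}. Your bookkeeping of merged arcs as paths, and your check of the tree-based hypothesis of \Cref{lem:n_fence_cycle}, are more careful than the paper's write-up, which glosses over both.

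The one thing to fix is your reading of ``arc of $\Gad(u)$''. Calling an arc of $A$ an arc of $\Gad(u)$ whenever its path contains an original arc of $\Gad(u)$ gives a weaker conclusion than the literal one, because a single merged valid arc can meet two gadgets. For example, suppose $r^w_5w^{w'}_1$ is a cross arc. Deleting $w^w_4r^w_5$ and then $w^{w'}_1r^{w'}_2$ (both valid at the time) produces a valid arc from $w^w_5$ to $r^{w'}_1$. Its path contains original arcs of both $\Gad(w)$ and $\Gad(w')$. Under your reading, the set $\{v : A \text{ contains an arc of } \Gad(v)\}$ is then no longer obviously of size at most $|A|$, and that bound is exactly what \Cref{lem:VCiffAORD} uses. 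The repair is short. Look at the first deletion in $A$ that has an endpoint in $\Gad(u)$ or $\Gad(v)$. At that moment no vertex of these gadgets has been suppressed and no arc between their vertices has been deleted. So every arc entering one of their reticulations, and every arc leaving one of their tree vertices, is still an original gadget arc, and that deleted arc is literally an arc of $\Gad(u)$ or $\Gad(v)$. If no such deletion exists, your Step~2 applies verbatim.
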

\begin{proof}
    We give a similar proof as done for \Cref{lem:LORDimpliesVC}. We prove by contraposition.
    Suppose there is an arc~$uv\in E(G)$ such that no arcs of~$\Gad(u)$ and~$\Gad(v)$ are in~$A$.
    We show that the network obtained after sequentially deleting $A$ is non-orchard.

    For an arc to be valid, its head must be a reticulation. 
    Therefore, the only arcs we may delete are contained in gadgets, namely, those with $r_i^w$ as its head, for $i\in [7]$, for all vertices~$w\in V(G)$.
    Every arc that feeds out of the gadget~$\Gad(u)$ has a reticulation $r^v_5, r^v_6, r^v_7$ as its tail; every arc that feeds in to $\Gad(u)$ has a tree vertex~$w^v_1,w^v_2, w^v_3$ as its head.
    By assumption, no arcs of~$\Gad(u)$ and $\Gad(v)$ are in~$A$.
    Due to these facts, no arc in~$A$ is incident with a vertex in~$\Gad(u)$ or~$\Gad(v)$.
    Let~$N'$ denote the network obtained by sequentially deleting arcs of~$A$ from~$N_G$. 
    In~$N'$, the gadgets~$\Gad(u)$ and~$\Gad(v)$ remain intact, with regards to its vertices, incoming arcs, and outgoing arcs (the vertices incident to the incoming and outgoing arcs may be different).

    In particular, the principal parts of~$\Gad(u)$ and $\Gad(v)$ remain N-fences in~$N'$. 
    Similarly as in the proof of \Cref{lem:LORDimpliesVC}, one can verify that the conditions for \Cref{lem:n_fence_cycle} are met.
    This means that the auxiliary graph of any cherry cover of~$N'$ contains a cycle.
    By \Cref{thm:AcyclicCherryCover}, we have that~$N'$ is not orchard.
\end{proof}

\begin{lemma}\label{lem:VCiffAORD}
    Let~$G$ be a 3-regular graph and let~$N_G$ be the network obtained by the reduction described above. Then $G$ has a vertex cover of size at most~$k$ if and only if~$A^*_{\cOr}(N_G) \le k$.
\end{lemma}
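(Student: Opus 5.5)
The proof has two directions, mirroring \Cref{lem:VCiffLORD}.

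\emph{Vertex cover implies deletion sequence.} Let $V_{sol}$ be a vertex cover of $G$ with $|V_{sol}|\le k$. For each $v\in V_{sol}$ I delete one reticulation arc of the principal part of $\Gad(v)$. The natural choice is $w_4^vr_4^v$, the arc to which the leaf $z^v$ was attached in the leaf-addition proof.

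First I check validity with \Cref{lem:ValidEdgeCharacterization}. The tail $w_4^v$ is a tree vertex with parent $m_1^v$ and other child $r_5^v$, and $m_1^v$ is not a parent of $r_5^v$, so case 2 fails. The other parent of $r_4^v$ is $w_3^v$, which is not a parent of $\ell_4^v$, so case 3 fails. Finally, $w_4^v$ and $r_4^v$ have no common parent, so case 4 fails. Since distinct gadgets share no arcs of this form, each of these deletions remains valid after the others have been performed, in any order.

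Deleting $w_4^vr_4^v$ suppresses $w_4^v$, giving an arc $m_1^vr_5^v$, and suppresses $r_4^v$, giving an arc $w_3^v\ell_4^v$. So $w_3^v$ gets a leaf child, and the principal part of $\Gad(v)$ splits into a short N-fence and a separate piece, much as adding $z^v$ did.

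Next I exhibit an HGT-consistent labelling of the resulting network $M$ and apply \Cref{thm:OrchIFFHori}. I take the labelling from the proof of \Cref{lem:VCiffLORD} and simply drop $r_4^v$ and $q^v$.
\begin{itemize}
\item For $v\in V_{sol}$: $r_5^v$ now has parents $m_1^v$ (label $0$) and $w_5^v$ (label $2$), with $t(r_5^v)=2$, so exactly one incoming arc is horizontal. The arc $w_3^v\ell_4^v$ is vertical, and $w_3^v$ still has a child with a larger label.
\item For $v\notin V_{sol}$: the labels are unchanged.
\end{itemize}
I then recheck every reticulation for exactly one horizontal incoming arc, and check the inter-gadget arcs from $r_{5,6,7}^u$ to $w_{1,2,3}^v$ for monotonicity. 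These inter-gadget arcs are what forced the choice of labels $1$--$4$ versus $5$--$11$ versus $12$--$15$, and they are exactly as before.

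The main obstacle is confirming that no new violation appears at the suppressed vertices. Should the check fail for this arc, I would try $w_3^vr_4^v$ or another principal-part arc, and redo the labelling locally.

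\emph{Deletion sequence implies vertex cover.} Let $A$ be a valid deletion sequence of length at most $k$ that yields an orchard network. By \Cref{lem:AORDimpliesVC}, every edge $uv\in E(G)$ has an arc of $A$ in $\Gad(u)$ or $\Gad(v)$. Hence $\{v : A \text{ contains an arc of } \Gad(v)\}$ is a vertex cover of $G$. Every valid arc has a reticulation head, and such arcs lie in exactly one gadget, so this map is well defined and the cover has size at most $|A|\le k$.
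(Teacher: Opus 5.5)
Your proposal is correct and follows the paper's proof. Like the paper, you delete $w_4^vr_4^v$ for each $v$ in the cover, reuse the HGT-consistent labelling from \Cref{lem:VCiffLORD}, and get the converse from \Cref{lem:AORDimpliesVC}; your explicit validity check and your local rechecks at $r_5^v$ and $w_3^v$ fill in details the paper only asserts. Two small corrections: the vertices dropped from that labelling are $w_4^v$ and $r_4^v$ (with $q^v,z^v$ simply absent from $N_G$), and your fallback is unnecessary because the check goes through, including at $m_1^v$, whose new child $r_5^v$ has label $2>0$.
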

\begin{proof}
    We argue analogously as done in the proof of \Cref{lem:VCiffLORD}.
    Let~$V_{sol}$ be a vertex cover of~$G$ of size at most~$k$.
    For every vertex~$v\in V_{sol}$, we remove an arc $w_4^v r_4^v$ from each~$\Gad(v)$. 
    Note that for each of the deleted arc, the two endpoints are also suppressed.
    Observe that these selected arcs are valid in the network they are removed in, independently of the order in which the gadgets are processed.
    Call the resulting graph~$N$. We shall show that~$N$ is orchard, thereby showing that~$A^*_{\cOr}(N_G) \le k$.
    In the remainder of this proof, we will refer to vertices and arcs of~$N_G$ as introduced above in the reduction.

    To show that~$N$ is orchard, we give an HGT-consistent labelling~$t:V(N) \rightarrow \mathbb{R}$. 
    The exact same labelling as given in the proof of \Cref{lem:VCiffLORD} suffices. The only difference is that the vertex sets are different. In particular, $w_4^v$ and $r_4^v$ are no longer vertices here, and the vertices~$q^v,z^v$ do not exist for every~$v\in V_{sol}$. 

    
    
    
    
    

    To prove the other direction, suppose that deleting the arcs of some set~$A_{sol}$ sequentially from~$N_G$ results in an orchard network, where~$|A_{sol}|\le k$.
    By \Cref{lem:AORDimpliesVC}, for every arc $uv\in E(G)$, there exists an arc~$a\in A_{sol}$ that is in~$\Gad(u)$ or~$\Gad(v)$. 
    It follows immediately that the set $\{v\in V(G): A_{sol} \text{ contains an arc of~$\Gad(v)$}\}$ is a vertex cover of~$G$. Clearly, this set is of size at most~$k$, which gives the required claim.
\end{proof}

\begin{theorem}\label{thm:A*_Or=Hard}
	Let~$N$ be a network. The decision problem \dpAsD~is NP-complete. Computing $A^*_\cOr(N)$ is NP-hard. 
\end{theorem}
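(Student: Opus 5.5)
The proof will mirror that of \Cref{thm:L_Or=Hard}: show that \dpAsD{} lies in NP, that the construction of $N_G$ from an instance $(G,k)$ of \dpVC{} runs in polynomial time, and then invoke \Cref{lem:VCiffAORD} for correctness of the reduction.

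\emph{Membership in NP.} A certificate is a sequence $A=(a_1,\ldots,a_j)$ of arcs with $j\le k$. We may assume $k$ is at most the number of reticulations of $N$, since each valid arc deletion removes a reticulation. The certificate therefore has polynomial size. To verify it, we process the arcs in order. For each $i$, we check that $a_i$ is a valid arc in the network obtained from $N$ by deleting $a_1,\ldots,a_{i-1}$ and cleaning up. This can be done either by deleting $a_i$, cleaning up, and checking that exactly three arcs disappeared, or directly via the local characterisation in \Cref{lem:ValidEdgeCharacterization}. Each step takes polynomial time. Finally, we test whether the resulting network is orchard in polynomial time, using the cherry-picking procedure of Section 6 of \cite{janssen2021cherry}. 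Since the order of cherry reductions does not matter, a greedy reduction suffices for this test.

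\emph{Hardness.} The construction of $N_G$ adds a constant number of vertices and arcs per vertex of $G$, plus $O(|V(G)|)$ spine vertices and $O(|E(G)|)$ connecting arcs, so it is polynomial. By \Cref{lem:VCiffAORD}, $G$ has a vertex cover of size at most $k$ if and only if $A^*_\cOr(N_G)\le k$, that is, if and only if $(N_G,k)$ is a yes-instance of \dpAsD{}. Since \dpVC{} is NP-complete, \dpAsD{} is NP-complete. Computing $A^*_\cOr(N)$ solves the decision version by comparing the value with $k$, so the optimisation problem is NP-hard. This also covers the convention $A^*_\cOr(N)=\infty$, since that value simply compares as larger than $k$.

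The only step needing real care is the NP verification, because validity depends on the current network rather than on $N$. The certificate must therefore be checked sequentially and not as a set. A secondary point is that the ``if'' direction of \Cref{lem:VCiffAORD} uses the validity of each $w_4^vr_4^v$ independently of deletion order. We take this as established in that lemma.
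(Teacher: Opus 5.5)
Your proposal is correct and follows the paper's proof. Both show membership in NP using a sequence of arcs as the certificate, checked with the polynomial-time orchard test of \cite{janssen2021cherry}; both note that the construction of $N_G$ is polynomial; and both derive correctness from \Cref{lem:VCiffAORD}. You also make explicit two points the paper leaves implicit: each arc must be checked for validity in the current network, and the certificate length is bounded by the number of reticulations.
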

\begin{proof}
    To see that the problem is in NP, note that a sequence of arcs~$A$ works as a certificate.
    Upon deleting the arcs sequentially from~$A$, one can check in polynomial time if the resulting network is orchard (see Section 6 of \cite{janssen2021cherry}).
    The reduction outlined at the start of \Cref{sec:Hardness} can be carried out in polynomial time.
    By \Cref{lem:VCiffAORD}, an input graph~$G$ is a \textsc{YES}-instance of \textsc{Degree-3 Vertex Cover} if and only if $N_G$ is a \textsc{YES}-instance of \dpAsD.
    This completes our NP-completeness proof. It follows immediately that computing $A^*_\cOr(N)$ is NP-hard.
\end{proof}

\section{Discussion}\label{sec:discussion}

\ym{We considered three types of proximity measures --- based on leaf addition, valid arc deletion, and arc deletion --- to the classes of trees, tree-child networks, orchard networks, and tree-based networks.
Given a network, we asked `how many operations (e.g., leaf addition) are needed to transform the network into one of a given class?'.
}

\ym{
\begin{table}[ht]
\centering
\begin{tabular}{l|ll|ll}
\toprule
\textbf{Class~$\cC$} & \textbf{Complexity} & \textbf{Reference} & \textbf{Upper Bound} & \textbf{Reference}\\
\midrule
Tree-child  & $O(|N|)$   & \Cref{thm:L_TCisPoly} & $\displaystyle\left\lfloor (3r-1)/2\right\rfloor$ & \Cref{thm:L_TCBound} \\
Tree-based  & $O(|N|)$   & \cite[Corollary 5.4]{hayamizu2021structure} & $\displaystyle\left\lfloor(r-1)/2\right\rfloor$ & \Cref{thm:L_TBBound}\\
Orchard     & NP-complete & \Cref{thm:L_Or=Hard} & $r-1$ & \Cref{thm:LORUpperBound} \\
\bottomrule
\end{tabular}
\caption{Computational complexity and upper bound of the leaf addition proximity measure $L_\cC(N)$ for a network~$N$.}
\label{tab:leaf-addition}
\end{table}
}
\ym{
We summarize the findings for the leaf addition measure~$L_\cC$ in \Cref{tab:leaf-addition}.
For tree-child networks and tree-based networks, one can remove local forbidden structures by adding leaves for each omnian and W-fence, respectively.
For orchard networks, this is clearly not possible, as computing the~$L_\cOr(N)$ measure is NP-hard.
This strongly suggests that a characterization of the class using local forbidden structures does not exist.}

\ym{
We also considered the pairwise comparability of the three proximity measures in \Cref{sec:comp}.
For the two arc deletions, since each valid arc deletion is an arc deletion, we have~$A_\cC\le A^*_\cC$ (\Cref{obs:AleA^*}). 
Interestingly,~$L_\cOr$ and~$A^*_\cOr$ are incomparable (\Cref{thm:IncompLorA*or}).
Similarly,~$L_\cOr$ and~$A_\cOr$ are also incomparable (\Cref{thm:IncompLorAor}).
Continuing this trend,~$L_\cTB$ and~$A_\cTB$ are incomparable (\Cref{thm:IncompLTBATB}).
Meanwhile, for all networks~$N$,~$L_\cTB(N)\le A^*_\cTB(N)$.
Intuitively, in finding a shortest sequence of valid arc deletions to obtain a tree-based network, this means that one can `mimic' each arc deletion by adding a leaf.
Moreover, for any network~$N$, we have $A_\cTC(N)\le L_\cTC(N)$ (\Cref{thm:CompLTCATC}), and it would be of great interest to know whether an analogous result holds for tree-child networks under valid arc deletions.
An example shows that~$1 = A^*_\cTC(N) \le L_\cTC(N) = 2$, and we conjecture the following.
\begin{conjecture}\label{conj:TCL_tcA*_tc}
    Let~$N$ be a network. Then~$A^*_\cTC(N)\le L_\cTC(N)$.
\end{conjecture}
}

\ym{
For valid arc deletions, we showed that if a network~$N$ does not contain an OSOS-subgraph --- which can be determined in $O(|N|\log |N|)$ time (\Cref{thm:FindOSOSCorrect}) --- then~$A^*_\cC(N)$ is finite for all classes considered in the paper (\Cref{cor:OSOSfImpliesFinite}).
The converse holds for the classes of trees, which implies that~$A^*_\cT(N)$ can be computed in polynomial time.
This is not true for other classes. For example, networks containing an OSOS-subgraph can still be tree-based (see \Cref{subfig:osos_tb}).
Even worse, computing~$A^*_\cOr(N)$ is NP-hard (\Cref{thm:A*_Or=Hard}).
The computational complexity of computing~$A^*_\cTC(N)$ and~$A^*_\cTB(N)$ remains an open question. 
\noindent
\begin{center}
\fbox{
\parbox{0.9\linewidth}{
{\textbf{Open problem 1:}}
What is the computational complexity of computing~$A^*_\cTC(N)$? What about of~$A^*_\cTB(N)$?
}
}
\end{center}
}

\ym{
The computational complexity of~$A_\cC$ also remains an open problem.
\noindent
\begin{center}
\fbox{
\parbox{0.9\linewidth}{
{\textbf{Open problem 2:}}
What is the computational complexity of computing~$A_\cC$, for~$\cC\in\{\cT,\cTC,\cOr,\cTB\}$?
}
}
\end{center}
}

\ym{
In another direction, one can also consider non-binary network inputs.
For tree-child networks and orchard networks, the results for~$L_\cC$ remain the same, as targeting omnians and the reduction from $\dpVC{}$ still hold.
For tree-based networks, however, we must consider a generalized version of the zig-zag decomposition, such as the one considered in~\cite{pons2025fence}.
In the paper, they give a characterization of semi-binary tree-based networks (tree vertices have outdegree-2 and reticulations have indegree at least 2), based on forbidden structures with this generalization.
We can of course try and target these local forbidden structures --- as we have done for W-fences --- to see if we can efficiently compute the exact~$L_\cTB$ score.
We can also pose questions on both the~$A^*_\cC$ and~$A_\cC$ measures for non-binary inputs.
}

\bibliography{z_bib}

\appendix

\end{document}